\newcommand{\D}{\mathcal{D}}              
\newcommand{\K}{\mathbb{K}}               
\newcommand{\C}{\mathbb{C}}               
\newcommand{\R}{\mathbb{R}}               
\newcommand{\Z}{\mathbb{Z}}                
\newcommand{\N}{\mathbb{N}}                
\renewcommand{\S}{\mathcal{S}}              
\renewcommand{\Re}{\mathrm{Re}\,}          
\renewcommand{\Im}{\mathrm{Im}\,}          
\renewcommand{\L}{\mathcal{L}}             
\newcommand{\A}{\mathcal{A}}             
\newcommand{\B}{\mathcal{B}}             
\newcommand{\Q}{\mathcal{Q}}             
\newcommand{\diag}{\mathrm{diag}}          
\newcommand{\one}{\mathbbm{1}}             
\newcommand{\cond}{\mathrm{cond}}          
\DeclareMathOperator{\esssup}{ess\,sup\,}
\newcommand{\amin}{a_{\mathrm{min}}} 
\newcommand{\amax}{a_{\mathrm{max}}} 
\newcommand{\azero}{a_0}
\newcommand{\aone}{a_1}
\newcommand{\bzero}{b_0}
\newcommand{\begriff}[1]{\textbf{#1}}
\renewcommand{\@secnumfont}{\bfseries}
  \def\section{\@startsection{section}{1}%
    \z@{.7\linespacing\@plus\linespacing}{.5\linespacing}%
    {\normalfont\LARGE\bfseries}}
\def\@seccntformat#1{%
  \protect\textup{%
    \protect\@secnumfont
    \expandafter\protect\csname format#1\endcsname 
    \csname the#1\endcsname
    \protect\@secnumpunct
  }%
}
\newcommand{\sect}
{
  \setcounter{equation}{0}
  \setcounter{figure}{0}
  \section
}
\newcommand{\enum}[1]{\textnormal{(\textbf{#1})}}
\theoremstyle{plain}
\newtheorem{definition}{Definition}[section]
\newtheorem{theorem}[definition]{Theorem}
\newtheorem{lemma}[definition]{Lemma}
\newtheorem{corollary}[definition]{Corollary}
\newtheorem{assumption}[definition]{Assumption}
\newtheorem{remark}[definition]{Remark}
\theoremstyle{definition}
\begin{document}
\title[Spatial Decay of Rotating Waves in\\Reaction Diffusion Systems]{Spatial Decay of Rotating Waves in\\Reaction Diffusion Systems}
\setlength{\parindent}{0pt}
\vspace*{0.75cm}
\begin{center}
\normalfont\huge\bfseries{\shorttitle}\\
\vspace*{0.25cm}
\end{center}

\vspace*{0.5cm}
\noindent
\hspace*{4.2cm}
\textbf{Wolf-J{\"u}rgen Beyn}\footnote[1]{e-mail: \textcolor{blue}{beyn@math.uni-bielefeld.de}, phone: \textcolor{blue}{+49 (0)521 106 4798}, \\
                                          fax: \textcolor{blue}{+49 (0)521 106 6498}, homepage: \url{http://www.math.uni-bielefeld.de/~beyn/AG\_Numerik/}.} \\
\hspace*{4.2cm}
Department of Mathematics \\
\hspace*{4.2cm}
Bielefeld University \\
\hspace*{4.2cm}
33501 Bielefeld \\
\hspace*{4.2cm}
Germany

\vspace*{0.5cm}
\noindent
\hspace*{4.2cm}
\textbf{Denny Otten}\footnote[2]{e-mail: \textcolor{blue}{dotten@math.uni-bielefeld.de}, phone: \textcolor{blue}{+49 (0)521 106 4784}, \\
                                 fax: \textcolor{blue}{+49 (0)521 106 6498}, homepage: \url{http://www.math.uni-bielefeld.de/~dotten/}. \\
                                 supported by CRC 701 'Spectral Structures and Topological Methods in Mathematics'} \\
\hspace*{4.2cm}
Department of Mathematics \\
\hspace*{4.2cm}
Bielefeld University \\
\hspace*{4.2cm}
33501 Bielefeld \\
\hspace*{4.2cm}
Germany

\vspace*{0.8cm}
\noindent
\hspace*{4.2cm}
Date: \today
\normalparindent=12pt

\vspace{0.4cm}
\noindent
\begin{center}
\begin{minipage}{0.9\textwidth}
  {\small
  \textbf{Abstract.} 
  In this paper we study nonlinear problems for Ornstein-Uhlenbeck operators
  \begin{align*}
    A\triangle v(x) + \left\langle Sx,\nabla v(x)\right\rangle + f(v(x)) = 0,\,x\in\mathbb{R}^d,\,d\geqslant 2,
  \end{align*}
  where the matrix $A\in\R^{N,N}$ is diagonalizable and has eigenvalues with
  positive real part, the map $f:\R^N \rightarrow \R^N$ is
  sufficiently smooth and the matrix $S\in\mathbb{R}^{d,d}$ in the unbounded drift term is skew-symmetric.
  Nonlinear problems of this form appear as stationary equations for
  rotating waves in time-dependent reaction 
  diffusion systems. 
  We prove under appropriate conditions that every bounded classical solution $v_{\star}$ 
  of the nonlinear problem, which falls below a certain threshold at infinity, already decays exponentially in space, in the sense that $v_{\star}$ belongs to  
  an exponentially weighted Sobolev space $ W^{1,p}_{\theta}(\R^d,\R^N)$.
  Several extensions of this basic result are presented: to complex-valued systems, to exponential decay in higher order Sobolev spaces and to pointwise estimates. 
  We also prove that every bounded classical solution $v$ of the eigenvalue problem
  \begin{align*}
    A\triangle v(x) + \left\langle Sx,\nabla v(x)\right\rangle + Df(v_{\star}(x))v(x) = \lambda v(x),\,x\in\mathbb{R}^d,\,d\geqslant 2,
  \end{align*}
  decays exponentially in space, provided $\Re \lambda$ lies to the right
  of the essential spectrum. As an application we analyze spinning soliton solutions which occur in the 
  Ginzburg-Landau equation.
  Our results form the basis for investigating nonlinear stability of rotating waves in higher space dimensions and truncations to bounded domains. 
  }
\end{minipage}
\end{center}

\noindent
\textbf{Key words.} Rotating waves, spatial exponential decay, Ornstein-Uhlenbeck operator, exponentially weighted resolvent estimates, reaction-diffusion equations.

\noindent
\textbf{AMS subject classification.} 35K57 (35B40, 47A55, 35Pxx, 35Q56, 47N40).


%
%
\sect{Introduction}
\label{sec:1}

In the present paper we study systems of reaction-diffusion equations
\begin{equation}
  \begin{aligned}
  \label{equ:NonlinearParabolicProblem2}
  \begin{split}
    u_t(x,t) & = A\triangle u(x,t) + f(u(x,t))   ,\,t>0,\,x\in\R^d,\,d\geqslant 2,\\
      u(x,0) & = u_0(x) \qquad\qquad\qquad\quad\;,\,t=0,\,x\in\R^d,
  \end{split}
  \end{aligned}
\end{equation}
where $A\in\R^{N,N}$ is a diffusion matrix, $f:\R^N\rightarrow\R^N$ is a sufficiently smooth nonlinearity, $u_0:\R^d\rightarrow\R^N$ are the initial data and 
$u:\R^d\times[0,\infty)\rightarrow\R^N$ denotes a vector-valued solution.

We are mainly interested in rotating wave solutions of \eqref{equ:NonlinearParabolicProblem2} which are of the form
\begin{align}
  \label{equ:RotatingWaveIntro}
  u_{\star}(x,t)=v_{\star}(e^{-tS}x),\,t\geqslant 0,\,x\in\R^d,\,d\geqslant 2
\end{align}
with space-dependent profile $v_{\star}:\R^d\rightarrow\R^N$ and skew-symmetric matrix $S\in\R^{d,d}$.
The skew-symmetry of $S$ implies that $e^{-tS}$ describes a  rotation 
in $\R^d$, and hence $u_{\star}$ is a solution rotating at constant velocity while maintaining its shape determined by $v_{\star}$. The profile $v_{\star}$ is called (exponentially) localized, if it tends (exponentially) to some constant vector $v_{\infty}\in\R^N$ 
as $|x|\to\infty$. 

Transforming \eqref{equ:NonlinearParabolicProblem2} via $u(x,t)=v(e^{-tS}x,t)$ into a co-rotating frame yields the evolution equation
\begin{equation}
  \begin{aligned}
  \label{equ:RotatingFrame2}
  \begin{split}
    v_t(x,t) =& A\triangle v(x,t) + \left\langle Sx,\nabla v(x,t)\right\rangle + f(v(x,t)) ,\,t>0,\,x\in\R^d,\,d\geqslant 2,\\
      v(x,0) =& u_0(x)                       \qquad\qquad\qquad\qquad\qquad\qquad\qquad\;\,,\,t=0,\,x\in\R^d.
  \end{split}
  \end{aligned}
\end{equation}
The diffusion and drift term are given by
\begin{align} \label{equ:diffdrift}
  A\triangle v(x):=A\sum_{i=1}^{d}\frac{\partial^2}{\partial x_i^2}v(x)\quad\text{and}\quad
  \left\langle Sx,\nabla v(x)\right\rangle:=\sum_{i=1}^{d}\sum_{j=1}^{d}S_{ij}x_j D_i v(x).
\end{align}
The pattern $v_{\star}$ itself appears as a stationary solution of \eqref{equ:RotatingFrame2}, i.e. $v_{\star}$ solves the steady state problem
\begin{align}
  \label{equ:NonlinearSteadyStateProblem2}
  A\triangle v_{\star}(x)+\left\langle Sx,\nabla v_{\star}(x)\right\rangle+f(v_{\star}(x))=0,\,x\in\R^d,\,d\geqslant 2.
\end{align}
We may write \eqref{equ:NonlinearSteadyStateProblem2} as $[\L_0 v_{\star}](x)+f(v_{\star}(x))=0$ by introducing the Ornstein-Uhlenbeck operator
\begin{align} \label{equ:OUintro}
  \left[\L_0 v\right](x) := A\triangle v(x)+\left\langle Sx,\nabla v(x)\right\rangle,\,x\in\R^d.
\end{align}
 
By the skew-symmetry of $S$ we can write the drift term in terms of
angular derivatives as follows
\begin{align} \label{equ:angderiv}
  \left\langle Sx,\nabla v(x)\right\rangle = \sum_{i=1}^{d-1}\sum_{j=i+1}^{d}S_{ij}\left(x_j\frac{\partial}{\partial x_i}-x_i\frac{\partial}{\partial x_j}\right)v(x).
\end{align}

The aim of this paper is to derive suitable conditions guaranteeing that every localized rotating wave of \eqref{equ:NonlinearParabolicProblem2} 
is already exponentially localized. More precisely, the main theorem  states the following: if the difference $v_{\star}-v_{\infty}$ of a  rotating wave to its far field value falls below a certain threshold at 
infinity, then it decays exponentially in space. The decay is specified
by showing  that $v_{\star}-v_{\infty}$ belongs to some exponentially weighted Sobolev space $W^{1,p}_{\theta}(\R^d,\R^N)$, $1<p<\infty$. Our key
assumption requires all eigenvalues of the Jacobian $Df(v_{\infty})$ to have
negative real part.

We extend this result to complex-valued systems and then apply it to prove exponential decay of localized spinning solitons 
arising in the cubic-quintic complex Ginzburg-Landau equation (QCGL), \cite{CrasovanMalomedMihalache2001}. Figure \ref{fig:QCGLRotatingWavesIntroduction}(a) 
shows the real part of a spinning soliton $v_{\star}$ in two space dimensions,  
while Figure \ref{fig:QCGLRotatingWavesIntroduction}(b) shows the isosurfaces of the real part of a spinning soliton in three space dimensions. 
Both of these rotating waves are exponentially localized, as our results will show. Two nonlocalized rotating waves are illustrated in Figure \ref{fig:QCGLRotatingWavesIntroduction}(c)-(d).
Figure \ref{fig:QCGLRotatingWavesIntroduction}(c) shows the real part of a spiral wave in two space dimensions and Figure \ref{fig:QCGLRotatingWavesIntroduction}(d) 
the isosurfaces of the real part of an untwisted scroll wave. In Section \ref{sec:6} below we will discuss this example in more detail.

\begin{figure}[H]
  \centering
  \subfigure[]{\includegraphics[page=1,height=3.3cm] {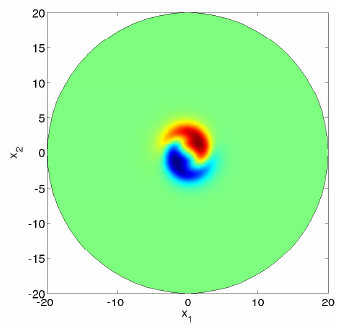}       \label{fig:QCGLSpinningSoliton2DRealPartIntroduction}}
  \subfigure[]{\includegraphics[page=2,height=3.3cm] {Images.pdf}       \label{fig:QCGLSpinningSoliton3DRealPartIntroduction}}
  \subfigure[]{\includegraphics[page=3,height=3.3cm] {Images.pdf} \label{fig:QCGLRotatingSpiralSoliton2DRealPart}}
  \subfigure[]{\includegraphics[page=4,height=3.3cm] {Images.pdf}     \label{fig:LambdaOmegaScrollWave3DRealPart}}
  \caption{Rotating waves of QCGL \eqref{equ:ComplexQuinticCubicGinzburgLandauEquation}. (a) Spinning solitons for $d=2$ with colorbar reaching from $-1.6$ (blue) to $1.6$ (red), 
           (b) spinning soliton for $d=3$ with isosurfaces at values $-0.5$ (blue) and $0.5$ (red), (c) spiral wave for $d=2$ with colorbar reaching from $-1.7$ (blue) to $1.7$ (red), 
           and (d) scroll wave for $d=3$ with isosurfaces at values $-0.5$ (blue) and $0.5$ (red)}
  \label{fig:QCGLRotatingWavesIntroduction}
\end{figure}  

An important issue is to investigate \textit{nonlinear stability of rotating waves} (more precisely, \textit{stability with asymptotic phase}) in 
reaction diffusion systems, see \cite{BeynLorenz2008}. A well known task is to derive nonlinear stability from linear stability of the linearized operator
\begin{align} \label{equ:linop}
  \left[\L v\right](x) := \left[\L_0 v\right](x) + Df\left(v_{\star}(x)\right)v(x),\,x\in\R^d.
\end{align}
By \textit{linear stability} (also called \textit{strong spectral stability}) we mean that the essential spectrum and the isolated eigenvalues of $\L$ lie strictly 
to the left of the imaginary axis, except for those on the imaginary axis
caused by Euclidean equivariance, \cite[Chp.9]{Otten2014}. This
requires to study isolated eigenvalues $\lambda\in\C$ of the problem
\begin{align}
  \label{equ:EigenvalueProblemIntro}
  \left[\left(\lambda I-\L\right)v\right](x) = 0,\,x\in\R^d.
\end{align}

A further aim of this paper is to prove that every bounded eigenfunction $v$ of the linearized operator $\L$ decays exponentially in space, provided the
real parts of the associated (isolated) eigenvalues $\lambda$ lie to the right of the
essential spectrum. To be more precise, we  show that for such values of
$\lambda$, every bounded classical solution $v$ of 
the eigenvalue problem \eqref{equ:EigenvalueProblemIntro} belongs to some exponentially 
weighted Sobolev space $W^{1,p}_{\theta}(\R^d,\R^N)$ for some $1<p<\infty$. In particular, we prove that the eigenfunction $v(x)=\left\langle Sx,\nabla v_{\star}(x)\right\rangle$ 
associated to the eigenvalue $\lambda=0$ decays exponentially in space.

A nonlinear stability result for two dimensional localized rotating patterns was proved by Beyn and Lorenz in \cite{BeynLorenz2008}. Their proof requires three 
essential assumptions: The matrix $Df(v_{\infty})$ is stable, meaning that all its eigenvalues have a negative real part. Moreover, strong spectral stability in 
the sense above is assumed. And finally, the profile $v_{\star}$ of the rotating wave and its derivatives up to order $2$ decay to zero at infinity. 
Their analysis shows that the decay of the rotating wave itself and the spectrum of the linearization are both crucial for investigating
nonlinear stability. A corresponding result on nonlinear stability of nonlocalized rotating waves, such as spiral 
waves and scroll waves, is still an open problem. The difficulty is related to the fact that the essential spectrum  touches the imaginary
axis at infinitely many points. The spectrum of the linearization at 
(nonlocalized) spiral waves is well-known and has been extensively studied 
by Sandstede, Scheel and Fiedler in \cite{FiedlerScheel2003,SandstedeScheel2000,SandstedeScheel2001}.

For numerical computations it is essential to truncate the equations \eqref{equ:NonlinearParabolicProblem2}, \eqref{equ:RotatingFrame2} and \eqref{equ:EigenvalueProblemIntro} 
to a bounded domain, so that standard approximations, e.g. with finite elements, apply. 
The truncation error arising in this  process, depends on the boundary conditions. Assuming that a rotating wave is (exponentially) localized, 
we can expect the truncation error to be (exponentially) small as well. For this reason, the \textit{exponential decay of rotating waves} plays a fundamental 
role when estimating errors caused by \textit{approximations of rotating waves on bounded domains}. 

We consider our results on the decay  of rotating waves for \eqref{equ:NonlinearParabolicProblem2} on the whole $\R^d$ as a first step
in studying such truncation errors. Despite numerous numerical simulations
of spiral behavior on bounded domains, a rigorous analysis of the 
errors caused by spatial truncation seems not to be available.

We emphasize that the results from Section \ref{sec:3}-\ref{sec:6} are extensions of the results from the PhD thesis \cite{Otten2014}. One major
improvement refers to the fact that our main result 
Theorem \ref{thm:NonlinearOrnsteinUhlenbeckSteadyState} avoids the additional assumption $v_{\star}\in L^p(\R^d,\R^N)$ from \cite[Theorem 1.8]{Otten2014}
by using ideas from the work \cite{BeynLorenz2014}.


%
%
\sect{Assumptions and main result}
\label{sec:2}

\subsection{Assumptions and main result}
\label{subsec:2.1}

Consider the steady state problem
\begin{align}
  \label{equ:NonlinearSteadyStateProblem}
  A\triangle v(x)+\left\langle Sx,\nabla v(x)\right\rangle+f(v(x))=0,\,x\in\R^d,\,d\geqslant 2,
\end{align}
with \begriff{diffusion matrix} $A\in\K^{N,N}$ and a function $f:\K^N\rightarrow\K^N$ for $\K\in\{\R,\C\}$.
Recall the Ornstein-Uhlenbeck operator from \eqref{equ:OUintro} with drift and diffusion term specified in \eqref{equ:diffdrift}.

We define a rotating wave  $u_{\star}$  as follows:
\begin{definition}\label{def:RotatingWave}
  A function $u_{\star}:\R^d\times[0,\infty)\rightarrow\K^N$ is called a \begriff{rotating wave} (or \begriff{rotating pattern}) 
  if it has the form
  \begin{align}
    \label{equ:RotatingWave}
    u_{\star}(x,t)=v_{\star}(e^{-tS}(x-x_{\star})),\,x\in\R^d,\,t\in[0,\infty),
  \end{align}
  with \begriff{profile} (or \begriff{pattern}) $v_{\star}:\R^d\rightarrow\K^N$, a skew-symmetric matrix $0\neq S\in\R^{d,d}$ and 
  $x_{\star}\in\R^d$. A rotating wave $u_{\star}$ is called \begriff{localized} (\begriff{exponentially localized with decay rate $\eta$})
  if it satisfies 
  \begin{align}\label{eq:localized}
    \lim_{|x|\to\infty}e^{\eta|x|}\left|v_{\star}(x)-v_{\infty}\right|=0\text{ for some $v_{\infty}\in\K^N$}
  \end{align}
  and for $\eta=0$ ($\eta>0$). It is called \begriff{nonlocalized}, if it is not localized in the sense above.
\end{definition}
The vector $x_{\star}\in\R^d$ can be considered as the center of rotation for $d=2$ and as the support vector of the axis of rotation for $d=3$. In case $d\in\{2,3\}$, $S$ can 
be considered as the angular velocity tensor associated to the angular velocity vector $\omega\in\R^{\frac{d(d-1)}{2}}$ containing $S_{ij}$, $i=1,\dots,d-1$, $j=i+1,\ldots,d$. 
Some examples of rotating patterns are illustrated in Figure \ref{fig:QCGLRotatingWavesIntroduction} and will be treated in Section \ref{sec:6} below.



In the following we will impose various restrictions on the matrix $A$:
\begin{assumption}
  \label{ass:Assumption1}
  For $A\in\K^{N,N}$ with $\K\in\{\R,\C\}$ and $1 < p <\infty$  consider
the conditions
  \begin{flalign}
    &\text{$A$ is diagonalizable (over $\C$)},         \tag{A1}\label{cond:A1} &\\
    &\Re\sigma(A)>0, \tag{A2}\label{cond:A2} &\\
    &\Re\left\langle w,Aw\right\rangle\geqslant\beta_A\;\forall\,w\in\K^N,\,|w|=1\text{ for some $\beta_A>0$,} \tag{A3}\label{cond:A3} &\\
    &\text{There exists $\gamma_A>0$ such that} \tag{A4}\label{cond:A4DC} &\\
    &\quad|z|^2\Re\left\langle w,Aw\right\rangle + (p-2)\Re\left\langle w,z\right\rangle\Re\left\langle z,Aw\right\rangle\geqslant\gamma_A |z|^2|w|^2\;\forall\,z,w\in\K^N \nonumber &\\
        &\text{Case ($N=1$, $\K=\R$): $A=a>0$,} \tag{A5}\label{cond:A4} \\
    &\text{Cases ($N\geqslant 2$, $\K=\R$) and ($N\geqslant 1$, $\K=\C$):} \nonumber 
    \quad A\text{ invertible and } \mu_1(A)>\frac{|p-2|}{p}. & 
  \end{flalign}
\end{assumption}

Assumption \eqref{cond:A1} is a \textbf{system condition} and ensures that all results for scalar equations can be extended to system cases. This condition 
is independent of \eqref{cond:A2}-\eqref{cond:A4} and is used in \cite{Otten2014,Otten2014a} to derive an explicit formula for the heat kernel 
of $\L_{0}$. A typical case where \eqref{cond:A1} holds, is a scalar complex-valued equation when transformed into a  real-valued system of dimension $2$ . 
The \textbf{positivity condition} \eqref{cond:A2} guarantees that the diffusion part $A\triangle$ 
is an elliptic operator. All eigenvalues $\lambda\in \sigma(A)$ of $A$ lie in the open right half-plane $\{\lambda\in\C\mid\Re\lambda>0\}$. 
 Condition \eqref{cond:A2} guarantees that $A^{-1}$ exists and that $-A$ is a stable matrix. 
The \textbf{strict accretivity condition} \eqref{cond:A3} is more restrictive than \eqref{cond:A2}. In \eqref{cond:A3} we use $\left\langle u,v\right\rangle:=\overline{u}^T v$ to denote
the standard inner product on $\K^N$. Recall that condition \eqref{cond:A2} is satisfied iff there exists an inner product
  $\left[\cdot,\cdot\right]$  and some $\beta_A>0$ such that  $\Re\left[w,Aw\right]\geqslant\beta_A$ forall $w\in\K^N$ with $\left[w,w\right]=1$.
 Condition \eqref{cond:A3} ensures that the differential operator $\L_{0}$ 
is closed on its (local) domain $\D^p_{\mathrm{loc}}(\L_0)$, see Theorem \ref{thm:LpMaximalDomainPart1} below. The \textbf{$L^p$-dissipativity condition} \eqref{cond:A4DC} is more restrictive than \eqref{cond:A3} 
and imposes additional requirements on the spectrum of $A$. This condition, which comes originally from \cite{CialdeaMazya2005,CialdeaMazya2009}, is used 
in \cite{Otten2014,Otten2015a} to prove $L^p$-resolvent estimates for $\L_{0}$. 
A geometrical meaning of \eqref{cond:A4DC} can be given in terms of the antieigenvalues of the diffusion matrix $A$. In \cite{Otten2014,Otten2015b}, it is proved 
that condition \eqref{cond:A4DC} is equivalent to the \textbf{$L^p$-antieigenvalue condition} \eqref{cond:A4}. Condition \eqref{cond:A4} requires that the 
\textbf{first antieigenvalue of $A$} (see \cite{Gustafson1968,Gustafson2012}), defined by
\begin{align*}
  \mu_1(A):=\inf_{\substack{w\in\K^N\\w\neq 0\\Aw\neq 0}}\frac{\Re\left\langle w,Aw\right\rangle}{|w||Aw|}
           =\inf_{\substack{w\in\K^N\\|w|=1\\Aw\neq 0}}\frac{\Re\left\langle w,Aw\right\rangle}{|Aw|},
\end{align*}
is bounded from below by a non-negative $p$-dependent constant. Condition \eqref{cond:A4} is also equivalent to the following $p$-dependent upper bound for the 
(\begriff{real}) \begriff{angle of $A$} (cf. \cite{Gustafson1968}),
\begin{align*}
  \Phi_{\R}(A):=\cos^{-1}\left(\mu_1(A)\right)<\cos^{-1}\left(\frac{|p-2|}{p}\right)\in\big(0,\frac{\pi}{2}\big],\quad 1<p<\infty.
\end{align*}
Therefore, the first antieigenvalue $\mu_1(A)$ can be considered as the cosine of the maximal (real) turning angle of vectors mapped by the matrix $A$. Some special cases 
in which the first antieigenvalue can be given explicitly are treated in \cite{Otten2015b}. We summarize the relationship of \eqref{cond:A2}--\eqref{cond:A4}:
\begin{align}
  \label{equ:RelationAssumptions}
  A\text{ invertible}\Longleftarrow\text{\eqref{cond:A2}}\Longleftarrow\text{\eqref{cond:A3}}\Longleftarrow\text{\eqref{cond:A4DC}}\Longleftrightarrow\text{\eqref{cond:A4}}.
\end{align}

We continue with the \textbf{rotational condition} \eqref{cond:A5} 
and a \textbf{smoothness condition} \eqref{cond:A6},
\begin{assumption}
  \label{ass:Assumption2}
  The matrix $S\in\R^{d,d}$ satisfies
  \begin{flalign}
    &\text{$S$ is skew-symmetric, i.e. $S=-S^T$.}        \tag{A6}\label{cond:A5} &
  \end{flalign}
\end{assumption}
\begin{assumption}
  \label{ass:Assumption3}
  The function $f:\R^N\rightarrow\R^N$ satisfies
  \begin{flalign}
    &f\in C^2(\R^N,\R^N).                             \tag{A7}\label{cond:A6} &
  \end{flalign}
\end{assumption}
Later on we apply our results to complex-valued nonlinearities of the form
\begin{equation} \label{equ:complexversion}
  f:\C^N\rightarrow\C^N,\quad f(u)=g\left(|u|^2\right)u,
\end{equation}
where $g:\R\rightarrow\C^{N,N}$ is a sufficiently smooth function. Such nonlinearities arise for example in Ginzburg-Landau equations, Schr\"odinger equations, 
$\lambda-\omega$ systems and many other equations from physical sciences, see Section \ref{sec:6}. 
Note, that in this case, the function $f$ is not holomorphic in $\C$, but its real-valued version in $\R^2$ satisfies \eqref{cond:A6} if $g$ is in $C^2$. For 
differentiable functions $f:\R^N\rightarrow\R^N$ we denote by $Df$ 
the Jacobian matrix in the real sense.
\begin{assumption}
  \label{ass:Assumption4}
  For $v_{\infty}\in\R^N$ consider the following conditions:
  \begin{flalign}
    &f(v_{\infty})=0,                                                                     \tag{A8}\label{cond:A7} &\\
    &\text{$A,Df(v_{\infty})\in\R^{N,N}$ are simultaneously diagonalizable (over $\C$),}  \tag{A9}\label{cond:A8} &\\
    &\Re\sigma\left(Df(v_{\infty})\right)<0,                                              \tag{A10}\label{cond:A9} &\\
    &\Re\left\langle w,-Df(v_{\infty})w\right\rangle\geqslant\beta_{-Df(v_{\infty})}\;\forall\,w\in\K^N,\,|w|=1\text{ for some $\beta_{\infty}:=\beta_{-Df(v_{\infty})}>0$.} \tag{A11}\label{cond:A10} &
  \end{flalign}
\end{assumption}
The \textbf{constant asymptotic state condition} \eqref{cond:A7} requires $v_{\infty}$ to be a steady state of the nonlinear equation. The \textbf{system condition} 
\eqref{cond:A8} is an extension of Assumption \eqref{cond:A1}, and
the \textbf{coercivity condition}  \eqref{cond:A10} is again  more restrictive
than the \textbf{spectral condition} \eqref{cond:A9}.

\begin{definition}\label{def:ClassicalSolution}
A function $v_{\star}:\R^d\rightarrow\K^N$ is called a \begriff{classical solution of \eqref{equ:NonlinearSteadyStateProblem}} if 
\begin{align}
  \label{equ:SmoothnessWeakSolution}
  v_{\star}\in C^2(\R^d,\K^N)
\end{align}
and $v_{\star}$ solves \eqref{equ:NonlinearSteadyStateProblem} pointwise.
\end{definition}

Later on, we will consider classical solutions $v_{\star}$ which are even
bounded, i.e. $v_{\star} \in C_{\mathrm{b}}(\R^d,\K^N)$. 
For matrices $C\in\K^{N,N}$ with spectrum $\sigma(C)$ we denote by $\rho(C):=\max_{\lambda\in\sigma(C)}\left|\lambda\right|$ its \begriff{spectral radius} 
and by $s(C):=\max_{\lambda\in\sigma(C)}\Re\lambda$ its \begriff{spectral abscissa} (or \begriff{spectral bound}). With this notation, we define the following 
constants which appear in the linear theory from \cite{Otten2014,Otten2014a,Otten2015a}:
\begin{equation}
  \begin{aligned}
  \amin :=& \left(\rho\left(A^{-1}\right)\right)^{-1}, &&\amax  := \rho(A),              &&\azero := -s(-A),\\
  \aone :=& \left(\frac{\amax^2}{\amin\azero}\right)^{\frac{d}{2}},                      &&\bzero := -s(Df(v_{\infty})).               && 
  \end{aligned}
  \label{equ:aminamaxazerobzero}
\end{equation}
Recall  the relations $0< a_0 \le \beta_{A}$ and $0<b_0 \le \beta_{\infty}$
to the coercivity constants from \eqref{cond:A3},\eqref{cond:A10}. 
Our main tool for investigating exponential decay in space are exponentially weighted function spaces. For the 
 choice of weight function we  follow \cite[Def. 3.1]{ZelikMielke2009}:
\begin{definition}\label{def:WeightFunctionOfExponentialGrowthRate} 
  \enum{1} A function $\theta\in C(\R^d,\R)$ is called a \begriff{weight function of exponential growth rate $\eta\geqslant 0$} provided that 
  \begin{flalign}
    &\theta(x)>0\;\forall\,x\in\R^d,                                                                        \tag{W1}\label{equ:WeightFunctionProp1} &\\
    &\exists\,C_{\theta}>0:\;\theta(x+y)\leqslant C_{\theta}\theta(x)e^{\eta|y|}\;\forall\,x,y\in\R^d.      \tag{W2}\label{equ:WeightFunctionProp2} &
  \end{flalign}
  \enum{2} A weight function $\theta\in C(\R^d,\R)$ of exponential growth rate $\eta\geqslant 0$ is called \begriff{radial} if
  \begin{flalign}
    &\exists\,\phi:[0,\infty)\rightarrow\R:\;\theta(x)=\phi\left(\left|x\right|\right)\;\forall\,x\in\R^d.   \tag{W3}\label{equ:WeightFunctionProp3} &
  \end{flalign}
  \enum{3} A radial weight function $\theta\in C(\R^d,\R)$ of exponential growth rate $\eta\geqslant 0$ is called \begriff{nondecreasing} 
  (or \begriff{monotonically increasing}) provided that
  \begin{flalign}
    &\theta(x)\leqslant\theta(y)\;\forall\,x,y\in\R^d\text{ with }|x|\leqslant|y|.                           \tag{W4}\label{equ:WeightFunctionProp4} &
  \end{flalign}
\end{definition}

Standard examples of radial weight functions are
\begin{align*}
  \theta_1(x)=\exp\left(\mu|x|\right) \quad\text{and}\quad \theta_2(x)=\cosh\left(\mu|x|\right), \quad x\in \R^d, \mu \in \R,
\end{align*}
as well as their smooth analogs 
\begin{align*}
  \theta_3(x)=\exp\left(\mu\sqrt{\left|x\right|^2+1}\right)\quad\text{and}\quad\theta_4(x)=\cosh\left(\mu\sqrt{\left|x\right|^2+1}\right), 
\quad x\in \R^d, \mu \in \R.
\end{align*}
Obviously, all these functions are radial weight functions of exponential growth rate $\eta=|\mu|$ with $C_{\theta}=1$. 
Moreover, $\theta_2$, $\theta_4$ are nondecreasing for any $\mu\in\R$ and $\theta_1$, $\theta_3$ if $\mu\geqslant 0$. 

With every weight function of exponential growth rate we associate \begriff{exponentially weighted Lebesgue} and 
\begriff{Sobolev spaces}
\begin{align*}
  L_{\theta}^{p}(\R^d,\K^N)   :=& \{u\in L^1_{\mathrm{loc}}(\R^d,\K^N)\mid \left\|\theta u\right\|_{L^p}<\infty\}, \\
  W_{\theta}^{k,p}(\R^d,\K^N) :=& \{u\in L^p_{\theta}(\R^d,\K^N)\mid D^{\beta}u\in L^p_{\theta}(\R^d,\K^N)\;\forall\,\left|\beta\right|\leqslant k\},
\end{align*}
for every $1\leqslant p\leqslant\infty$ and $k\in\N_0$.

With these preparations we can formulate the main result of our paper.

\begin{theorem}[Exponential decay of $v_{\star}$]\label{thm:NonlinearOrnsteinUhlenbeckSteadyState}
  Let the assumptions \eqref{cond:A4DC}, \eqref{cond:A5}--\eqref{cond:A8} and \eqref{cond:A10} be satisfied for $\K=\R$ and for some $1<p<\infty$. 
  Moreover, let $\amax=\rho(A)$ denote the spectral radius of $A$, $-\azero=s(-A)$ the spectral bound of $-A$ and $-\bzero=s(Df(v_{\infty}))$ the 
  spectral bound of $Df(v_{\infty})$. Further, let $\theta(x)=\exp\left(\mu\sqrt{|x|^2+1}\right)$ denote a weight function for $\mu\in\R$. 
  Then, for every $0<\varepsilon<1$ there is a constant $K_1=K_1(A,f,v_{\infty},d,p,\varepsilon)>0$ with the following property: Every classical solution 
  $v_{\star}$ of
  \begin{align}
    \label{equ:NonlinearProblemRealFormulation}
    A\triangle v(x)+\left\langle Sx,\nabla v(x)\right\rangle+f(v(x))=0,\,x\in\R^d,
  \end{align}
  such that
  \begin{align}
    \label{equ:BoundednessConditionForVStar}
    \sup_{|x|\geqslant R_0}\left|v_{\star}(x)-v_{\infty}\right|\leqslant K_1\text{ for some $R_0>0$},
  \end{align}
  satisfies
  \begin{align*}
    v_{\star}-v_{\infty}\in W^{1,p}_{\theta}(\R^d,\R^N)
  \end{align*}
  for every exponential decay rate
  \begin{align} 
    \label{eq:growthest}
    0\leqslant\mu\leqslant\varepsilon\frac{\sqrt{\azero\bzero}}{\amax p}.
  \end{align}
\end{theorem}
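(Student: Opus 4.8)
The plan is to reduce the steady state equation, for the fixed solution $v_{\star}$, to a linear equation with $x$-dependent coefficients for the deviation $w:=v_{\star}-v_{\infty}$, to conjugate it by the exponential weight --- the crucial point being that the \emph{unbounded} drift term $\langle Sx,\nabla\cdot\rangle$ is left invariant by conjugation with a radial weight, thanks to the skew-symmetry of $S$ --- and then to run a truncated-weight bootstrap which upgrades the merely bounded $w$ to an element of $W^{1,p}_{\theta}$ without assuming $w\in L^p$ a priori (this last part following ideas of \cite{BeynLorenz2014}). First I would set $w:=v_{\star}-v_{\infty}\in C^2(\R^d,\R^N)\cap\Cb(\R^d,\R^N)$ and use $\L_0 v_{\infty}=0$, $f(v_{\infty})=0$ from \eqref{cond:A7} and $f\in C^2$ from \eqref{cond:A6} to rewrite \eqref{equ:NonlinearProblemRealFormulation} as
\[
  \L_0 w+Df(v_{\infty})\,w+B(x)\,w=0,\qquad B(x):=\int_0^1\bigl[Df\bigl(v_{\infty}+t\,w(x)\bigr)-Df(v_{\infty})\bigr]\,dt ,
\]
with $\|B(x)\|\le\omega(|w(x)|)$ and $\omega(s):=\sup_{|\xi|\le s}\|Df(v_{\infty}+\xi)-Df(v_{\infty})\|\to 0$ as $s\downarrow 0$. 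Enlarging $R_0$ if necessary, \eqref{equ:BoundednessConditionForVStar} gives $\|B(x)\|\le\omega(K_1)$ for $|x|\ge R_0$; I then split $B=B_{<}+B_{\ge}$ according to $\{|x|<R_0\}$ and $\{|x|\ge R_0\}$, so that $\|B_{\ge}\|_{L^{\infty}}\le\omega(K_1)$ is small by the choice of $K_1$ while $B_{<}w$ is bounded with support in $\overline{B_{R_0}}$, hence lies in $L^p(\R^d,\R^N)$.

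For $\psi(x)=\sqrt{|x|^2+1}$ and $\theta=e^{\mu\psi}$ the decisive identity is $\langle Sx,\nabla\theta(x)\rangle=\mu\,\theta(x)\,\psi(x)^{-1}(Sx)^{T}x=0$ by \eqref{cond:A5}; thus $\theta$ commutes with the drift term, and a short computation yields
\[
  \theta\bigl(\L_0+Df(v_{\infty})\bigr)\theta^{-1}=\L_0+Df(v_{\infty})+E_{\mu},\qquad E_{\mu}u=-2\mu\,A(\nabla\psi\!\cdot\!\nabla)u+\mu\bigl(\mu|\nabla\psi|^2-\triangle\psi\bigr)A\,u ,
\]
where $|\nabla\psi|\le 1$ and $\triangle\psi$ is bounded, so $E_{\mu}$ is an $\L_0$-bounded perturbation of size $O(\mu\,\amax)$; the same holds with $R$-uniform constants for the truncated radial weights $\theta_R:=e^{\mu\psi_R}$, $R>R_0$, where $\psi_R\in C^2$ is radial, equals $\psi$ on $\{\psi\le R\}$, is constant on $\{\psi\ge 2R\}$, $|\nabla\psi_R|\le 1$ and $|\triangle\psi_R|$ bounded uniformly in $R$, so that $\theta_R$ is bounded and $\theta_R\nearrow\theta$. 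I then invoke the resolvent theory for the Ornstein--Uhlenbeck operator from \cite{Otten2014,Otten2015a}: under \eqref{cond:A4DC}, \eqref{cond:A8} --- which also makes $\L_0$ and $Df(v_{\infty})$ commute and provides the explicit heat kernel --- and \eqref{cond:A10}, the operator $\L_0+Df(v_{\infty})$ is boundedly invertible on $L^p(\R^d,\R^N)$ and on $\Cb(\R^d,\R^N)$ with mutually consistent resolvents, and $\|(\L_0+Df(v_{\infty}))^{-1}\|_{L^p\to W^{1,p}}$ is controlled quantitatively by $\azero,\bzero,p$ (equivalently, exponentially weighted resolvent estimates hold). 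Tracking constants then shows that \eqref{eq:growthest} is exactly the range of $\mu$ in which $\|E_{\mu}(\L_0+Df(v_{\infty}))^{-1}\|_{L^p\to L^p}<1$, once the first-order part $-2\mu A(\nabla\psi\!\cdot\!\nabla)$ is absorbed by Young's inequality --- the factor $\amax$ arising from $\|A\|$, the $\sqrt{\azero\bzero}$ from the resolvent norm and the $p$ from the $L^p$-dissipativity constant. Shrinking $K_1$ further so that $\omega(K_1)$ also lies below the corresponding threshold, the operators $\L_0+Df(v_{\infty})+E_{\mu,R}+B_{\ge}$ are boundedly invertible on both $L^p(\R^d,\R^N)$ and $\Cb(\R^d,\R^N)$, with norms independent of $R$.

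For the final step, multiplying the equation for $w$ by $\theta_R$ and using the conjugation identity shows that $\theta_R w\in\Cb(\R^d,\R^N)$ solves
\[
  \bigl(\L_0+Df(v_{\infty})+E_{\mu,R}+B_{\ge}\bigr)(\theta_R w)=-B_{<}(\theta_R w)=-B_{<}(\theta w)=:h\in L^p(\R^d,\R^N),
\]
where $h$ is independent of $R$ because $\theta_R=\theta$ on $\overline{B_{R_0}}\supset\supp B_{<}$. Since the operator on the left is boundedly invertible on both $\Cb$ and $L^p$ with $R$-uniform bounds and with consistent resolvents, the bounded function $\theta_R w$ must coincide with the $L^p$-solution $\bigl(\L_0+Df(v_{\infty})+E_{\mu,R}+B_{\ge}\bigr)^{-1}h$; hence $\theta_R w\in W^{2,p}(\R^d,\R^N)$ with $\|\theta_R w\|_{W^{2,p}}\le C\|h\|_{L^p}$, $C$ independent of $R$. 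Letting $R\to\infty$ and applying Fatou's lemma to $w$ and its first derivatives (after absorbing the bounded factors $\nabla\theta_R/\theta_R$ and $\triangle\theta_R/\theta_R$) gives $\theta w\in W^{1,p}(\R^d,\R^N)$, i.e. $v_{\star}-v_{\infty}\in W^{1,p}_{\theta}(\R^d,\R^N)$, which is the assertion.

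I expect the main obstacle to be precisely this last identification --- proving that the a priori only bounded solution $\theta_R w$ equals the $L^p$-resolvent applied to $h$. It is what forces both the localization of the nonlinear remainder (so that only the genuinely small piece $B_{\ge}$ is treated perturbatively) and the reconciliation of the $L^p$- and $\Cb$-realizations of $\L_0$, for which \eqref{cond:A8} and the explicit heat kernel are essential --- together with the bookkeeping needed to read off the sharp constant in \eqref{eq:growthest} from the weighted resolvent estimates of \cite{Otten2014,Otten2015a}.
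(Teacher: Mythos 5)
Your skeleton --- linearize around $v_\infty$ via the mean value theorem, split the variable coefficient into a part small for $|x|\ge R_0$ and a compactly supported part, and bootstrap the merely bounded $w=v_\star-v_\infty$ into a weighted space --- is the same as the paper's. Your route to the weighted estimate is genuinely different in technique: you conjugate by truncated radial weights $\theta_R$, exploit $\langle Sx,\nabla\theta_R\rangle=0$ (correct and essential, the same observation underlies the term $T_4=0$ in the paper's energy estimate), and try to invert the conjugated operator on unweighted $L^p$. The paper instead works directly in weighted spaces $L^p_\theta$ using the explicit heat kernel \eqref{equ:HeatKernel} and the a priori estimate of Theorem~\ref{thm:APrioriEstimatesInLpConstantCoefficients}, then solves by a contraction in $L^p_{\theta_2}$ (Theorem~\ref{thm:APrioriEstimatesInLpSmallPerturbation}).

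The genuine gap is exactly the step you flag as the ``main obstacle'': identifying the bounded function $\theta_R w$ with the $L^p$-resolvent applied to $h$. You appeal to bounded invertibility of the conjugated operator on $\Cb(\R^d,\R^N)$ with resolvents ``mutually consistent'' with the $L^p$ realization, citing \cite{Otten2014,Otten2015a}; but those references develop only the $L^p$ theory, the Ornstein--Uhlenbeck semigroup is not strongly continuous on $\Cb$ because of the unbounded drift, and your extra first-order piece $E_{\mu,R}=-2\mu A(\nabla\psi_R\!\cdot\!\nabla)+\cdots$ is outside the class they treat in any case. So as written this step is unsupported, and it is precisely the technical heart of the theorem. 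The paper closes it with a different device: Theorem~\ref{thm:WeightedResolventEstimates}, a uniqueness/energy estimate for operators of the form $\L_B=\L_0-B(x)$ in the \emph{large} space $W^{2,p}_{\mathrm{loc}}\cap L^p_{\theta_1}$ with a mildly growing weight $\theta_1=e^{\mu_1\sqrt{|x|^2+1}}$, $\mu_1<0$, obtained by multiplying the equation by $\chi_n^2\theta_1\overline{v}^T|v|^{p-2}$ and invoking Lemma~\ref{lem:LemmaForUniquenessInDpmax}. Since both $w$ and the $L^p_{\theta_2}$ fixed-point solution lie in that large space, uniqueness there identifies them (Theorem~\ref{thm:APrioriEstimatesInLpRelativelyCompactPerturbation}). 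Note this estimate is run on the \emph{unconjugated} operator $\L_{\mathrm s}$ precisely so that it is of the form $\L_B$; your conjugated operator, carrying a genuine variable-coefficient gradient term, is not, and the energy argument would have to be extended to cover it. To repair your proof you should divide by $\theta_R$ again before the uniqueness step and supply a statement of the type of Theorem~\ref{thm:WeightedResolventEstimates} for $\L_{\mathrm s}$; the $\Cb$/$L^p$ consistency route is not what the paper does and is not elementary. As a secondary remark, your claim that ``tracking constants'' yields exactly the bound \eqref{eq:growthest} is plausible but unverified; the paper's constant comes from the explicit heat-kernel integrals behind Theorem~\ref{thm:APrioriEstimatesInLpConstantCoefficients}, and a Young-inequality absorption of $E_\mu$ would have to be checked to reproduce it.
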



Roughly speaking, Theorem \ref{thm:NonlinearOrnsteinUhlenbeckSteadyState} states that every bounded classical solution $v_{\star}$ which is sufficiently close to the 
steady state $v_{\infty}$ at infinity, see \eqref{equ:BoundednessConditionForVStar}, must decay exponentially in space. The exponential decay 
is expressed by the fact, that $v_{\star}-v_{\infty}$ belongs to an exponentially weighted Sobolev space. Moreover, the theorem gives an explicit bound for 
the exponential growth rate, that depends only on $p$, the spectral radius of $A$, and the spectral abscissas of $-A$ and $Df(v_{\infty})$. 
The role of $\varepsilon$ becomes clear upon noting that $K_1 \rightarrow 0$
as $\varepsilon \rightarrow 1$ whereas $K_1 \rightarrow K_1^0>0$
as $\varepsilon \rightarrow 0$.  The stronger the exponential rate, the
closer the solution $v_{\star}$ has to approach $v_{\infty}$ at infinity.

\subsection{Outline of proof: Decomposition of linear differential operators}
\label{subsec:2.2}

In the following we explain the decomposition of differential operators that leads to the proof of Theorem \ref{thm:NonlinearOrnsteinUhlenbeckSteadyState}.

\vspace*{\topsep}
\noindent
\textbf{Far-Field Linearization.} Consider the nonlinear problem
\begin{align}
  \label{equ:FarField1}
  A\triangle v_{\star}(x)+\left\langle Sx,\nabla v_{\star}(x)\right\rangle+f(v_{\star}(x))=0,\,x\in\R^d,\,d\geqslant 2.
\end{align}
Let $v_{\infty}\in\R^N$ be the constant asymptotic state satisfying \eqref{cond:A7} and let $f\in C^{1}(\R^N,\R^N)$.
By the Mean Value Theorem we can write
\begin{align*}
  f(v_{\star}(x))=\underbrace{f(v_{\infty})}_{=0}+\underbrace{\int_{0}^{1}Df(v_{\infty}+t(v_{\star}(x)-v_{\infty}))dt}_{=:a(x)}(v_{\star}(x)-v_{\infty}),\,x\in\R^d.
\end{align*}
From $v_{\star}\in C_{\mathrm{b}}(\R^d,\R^N)$ we deduce $a\in C_{\mathrm{b}}(\R^d,\R^{N,N})$. Moreover, since the classical solution $v_{\star}$ solves \eqref{equ:FarField1} 
pointwise and $v_{\infty}\in\R^N$ is constant, the difference $w_{\star}:=v_{\star}-v_{\infty}$ belongs to $C^2(\R^d,\R^N)\cap C_{\mathrm{b}}(\R^d,\R^N)$ and satisfies 
the linearized equation
\begin{align}
  \label{equ:FarField2}
 \L w_{\star}= A\triangle w_{\star}(x)+\left\langle Sx,\nabla w_{\star}(x)\right\rangle+a(x)w_{\star}(x)=0,\,x\in\R^d.
\end{align}
In order to study the behavior of solutions to \eqref{equ:FarField1} as $|x|\to\infty$, we decompose the variable coefficient $a(x)$ in \eqref{equ:FarField2}.

\vspace*{\topsep}
\noindent
\textbf{Decomposition of $a$.} Let $a(x)=Df(v_{\infty})+Q(x)$ with $Q$ defined by
\begin{align*}
  Q(x):=\int_{0}^{1}Df\left(v_{\infty}+t w_{\star}(x)\right)-Df\left(v_{\infty}\right)dt,\,x\in\R^d.
\end{align*}
This yields $Q\in C_{\mathrm{b}}(\R^d,\R^{N,N})$ and \eqref{equ:FarField2} reads as
\begin{align}
  \label{equ:FarField3}
  A\triangle w_{\star}(x)+\left\langle Sx,\nabla w_{\star}(x)\right\rangle+\left(Df(v_{\infty})+Q(x)\right)w_{\star}(x)=0,\,x\in\R^d,\,d\geqslant 2.
\end{align}

\vspace*{\topsep}
\noindent
\textbf{Decomposition of $Q$.} Let $Q(x)=Q_{\mathrm{s}}(x)+Q_{\mathrm{c}}(x)$, where $Q_{\mathrm{s}}\in C_{\mathrm{b}}(\R^d,\R^{N,N})$ is 
small w.r.t. $\left\|\cdot\right\|_{\infty}$ and $Q_{\mathrm{c}}\in C_{\mathrm{b}}(\R^d,\R^{N,N})$ is compactly supported on $\R^d$, 
see Figure \ref{fig:qdecomposition}. Then, we arrive at
\begin{align}
  \label{equ:FarFieldLinearization}
  A\triangle w_{\star}(x)+\left\langle Sx,\nabla w_{\star}(x)\right\rangle+\left(Df(v_{\infty})+Q_{\mathrm{s}}(x)+Q_{\mathrm{c}}(x)\right)w_{\star}(x)=0,\,x\in\R^d.
\end{align}
If we omit the term $Q_{\mathrm{s}}+Q_{\mathrm{c}}$ in \eqref{equ:FarFieldLinearization}, the equation \eqref{equ:FarFieldLinearization} is called the 
\begriff{far-field linearization}.

\begin{figure}[ht]
  \centering
  \begin{psfrags}
    \psfrag{qabs}[b][b]{\footnotesize$\left|Q(x)\right|$}
    \psfrag{qeps}[b][b]{\footnotesize\textcolor{red}{$\left|Q_{\mathrm{s}}(x)\right|$}}
    \psfrag{qcom}[b][b]{\footnotesize\textcolor{blue}{$\left|Q_{\mathrm{c}}(x)\right|$}}
    \psfrag{K2}[b][b]{\footnotesize$K_1$}
    \psfrag{R0}[b][b]{\footnotesize$R_0$}
    \psfrag{R}[b][b]{\footnotesize$\left|x\right|=R$}
    \includegraphics[page=5,width=0.75\textwidth]{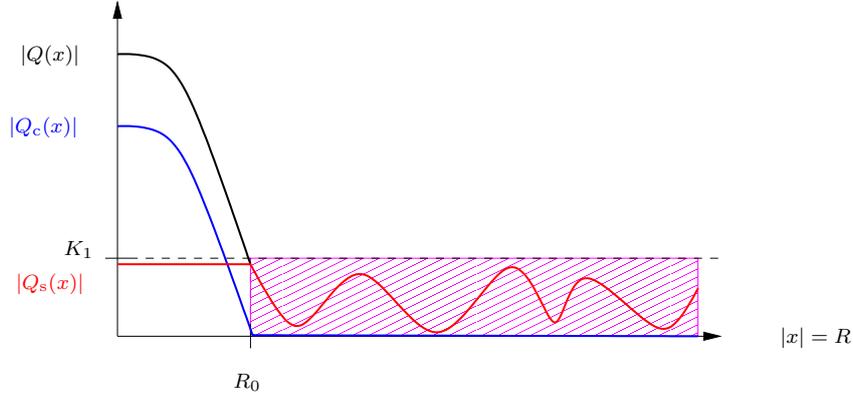}  
    \caption{Decomposition of $Q$ with data $R_0$ and $K_1$ from Theorem \ref{thm:NonlinearOrnsteinUhlenbeckSteadyState}\label{fig:qdecomposition}}
  \end{psfrags}
\end{figure}

\noindent
\textbf{Perturbations of Ornstein-Uhlenbeck operator.} In order to show exponential decay for the solution $v_{\star}$ of the nonlinear steady state 
problem \eqref{equ:FarField1}, it is sufficient to analyze solutions of the linear system \eqref{equ:FarFieldLinearization}. Abbreviating $B_{\infty}:=-Df(v_{\infty})$, 
we will study the following linear differential operators:
\begin{equation} \label{equ:operators}
\begin{aligned}
  \left[\L_{\mathrm{c}}v\right](x)  &= A\triangle v(x)+\left\langle Sx,\nabla v(x)\right\rangle-B_{\infty}v(x)+Q_{\mathrm{s}}(x)v(x)+Q_{\mathrm{c}}(x)v(x), \\
  \left[\L_{\mathrm{s}}v\right](x)  &= A\triangle v(x)+\left\langle Sx,\nabla v(x)\right\rangle-B_{\infty}v(x)+Q_{\mathrm{s}}(x)v(x), \\
  \left[\L_{\infty}v\right](x)      &= A\triangle v(x)+\left\langle Sx,\nabla v(x)\right\rangle-B_{\infty}v(x), \\
  \left[\L_{0}v\right](x)           &= A\triangle v(x)+\left\langle Sx,\nabla v(x)\right\rangle.
\end{aligned}
\end{equation}
Recall that the drift term $\left\langle Sx,\nabla v(x)\right\rangle, x\in \R^d$, in the Ornstein-Uhlenbeck operator $\L_{0}$ has unbounded coefficients 
and cannot be considered as a lower order term.
Later on, it will be convenient to allow complex coefficients for the operators $\L_0$, $\L_{\infty}$, $\L_{\mathrm{s}}$ and $\L_{\mathrm{c}}$. 
Therefore, we rewrite the assumptions \eqref{cond:A8}--\eqref{cond:A10} as follows:
\begin{assumption}
  \label{ass:Assumption5}
  For  $B_{\infty}\in\K^{N,N}$ consider the conditions
  \begin{flalign}
    &\text{$A,B_{\infty}\in\K^{N,N}$ are simultaneously diagonalizable (over $\C$), i.e.}          \tag{A9${}_{B_{\infty}}$}\label{cond:A8B} &\\
    &\quad \exists\,Y\in\C^{N,N}\text{ invertible}:\;Y^{-1}AY=\Lambda_A\text{ and }Y^{-1}B_{\infty}Y=\Lambda_{B_{\infty}}, \nonumber &\\
    &\text{where $\Lambda_A=\diag\left(\lambda_1^A,\ldots,\lambda_N^A\right),\Lambda_{B_{\infty}}=\diag\left(\lambda_1^{B_{\infty}},\ldots,\lambda_N^{B_{\infty}}\right)\in\C^{N,N}$}\nonumber &\\
    &\Re\sigma(B_{\infty})>0,                     \tag{A10${}_{B_{\infty}}$}\label{cond:A9B} & \\
    &\Re\left\langle w,B_{\infty}w\right\rangle\geqslant\beta_{B_{\infty}}\;\forall\,w\in\K^N,\,|w|=1\text{ for some $\beta_{\infty}:=\beta_{B_{\infty}}>0$.} \tag{A11${}_{B_{\infty}}$}\label{cond:A10B} &
  \end{flalign}
\end{assumption}
Similar comments as those following \eqref{cond:A8}--\eqref{cond:A10} apply.
In addition to  \eqref{equ:aminamaxazerobzero}, we need the constants
\begin{equation} \label{equ:constant2}
\bzero:=-s(-B_{\infty}), \quad \kappa:=\cond(Y) \quad
\text{(the condition number of $Y$ from \eqref{cond:A8B})}.
\end{equation}

\subsection{Constant coefficient perturbations of complex Ornstein-Uhlenbeck operators}
\label{subsec:OrnsteinUhlenbeckOperator}

In the first step we review and collect results from \cite{Otten2014,Otten2014a,Otten2015a,Otten2015b} for the 
complex-valued Ornstein-Uhlenbeck operator $\L_0$ in $L^p(\R^d,\C^N)$ and its constant coefficient perturbation $\L_{\infty}$.

Assuming \eqref{cond:A2}, \eqref{cond:A5} and \eqref{cond:A8B} for $\K=\C$ it is shown in \cite[Theorem 4.2-4.4]{Otten2014}, \cite[Theorem 3.1]{Otten2014a} 
that the function $H_{\infty}:\R^d\times\R^d\times(0,\infty)\rightarrow\C^{N,N}$ defined by
\begin{align}
  \label{equ:HeatKernel}
  H_{\infty}(x,\xi,t)=(4\pi t A)^{-\frac{d}{2}}\exp\left(-B_{\infty}t-(4tA)^{-1}\left|e^{tS}x-\xi\right|^2\right),
\end{align}
is a heat kernel of the perturbed Ornstein-Uhlenbeck operator $\L_{\infty}$ from \eqref{equ:operators}.
Under the same assumptions it is proved in \cite[Theorem 5.3]{Otten2014a} that the family of mappings 
\begin{align}
  \left[T_{\infty}(t)v\right](x):= \begin{cases}
                              \int_{\R^d}H_{\infty}(x,\xi,t)v(\xi)d\xi &\text{, }t>0 \\
                              v(x) &\text{, }t=0
                            \end{cases}\quad ,\,x\in\R^d,
  \label{equ:OrnsteinUhlenbeckSemigroupLp}
\end{align}
generates a strongly continuous semigroup 
$T_{\infty}(t):L^p(\R^d,\C^N)\rightarrow L^p(\R^d,\C^N)$, $t\geqslant 0$
 for each $1\leqslant p<\infty$, which satisfies the 
following estimate (see  \eqref{equ:aminamaxazerobzero},\eqref{equ:constant2} for the constants)
\begin{align}
  \label{equ:LpSemigroupBound}
  \left\|T_{\infty}(t)v\right\|_{L^p}\leqslant\kappa\aone e^{-\bzero t}\left\|v\right\|_{L^p}\;\forall\,t\geqslant 0.
\end{align}
The semigroup $\left(T_{\infty}(t)\right)_{t\geqslant 0}$ is called the Ornstein-Uhlenbeck semigroup if $B_{\infty}=0$. Otherwise, $\left(T_{\infty}(t)\right)_{t\geqslant 0}$ 
is a perturbed Ornstein-Uhlenbeck semigroup. The strong continuity of the semigroup justifies to introduce its infinitesimal generator 
$\A_p:L^p(\R^d,\C^N)\supseteq\D(\A_p)\rightarrow L^p(\R^d,\C^N)$ via
\begin{align*}
  \D(\A_p):=&\left\{v\in L^p(\R^d,\C^N)\mid \A_p v :=\lim_{t\downarrow 0}\frac{T_{\infty}(t)v-v}{t}\text{ exists in $L^p(\R^d,\C^N)$}\right\} .
\end{align*}
An application of abstract semigroup theory yields the unique solvability of the resolvent equation 
\begin{align}
  \label{equ:ResolventEquationAp}
  \left(\lambda I-\A_p\right)v = g,\quad\text{for all }g\in L^p(\R^d,\C^N),\;\lambda\in\C,\;\Re\lambda>-\bzero:=s(-B_{\infty})
\end{align}
in $L^p(\R^d,\C^N)$ for $1\leqslant p<\infty$, \cite[Corollary 6.7]{Otten2014}, \cite[Corollary 5.5]{Otten2014a}. Combining \eqref{equ:OrnsteinUhlenbeckSemigroupLp} 
with the representation $\left(\lambda I-\A_p\right)^{-1}g:=\int_{0}^{\infty}e^{-\lambda s}T_{\infty}(s)g ds$, the solution $v\in\D(\A_p)$ of \eqref{equ:ResolventEquationAp} 
satisfies
\begin{align}
  \label{equ:IntegralRepresentationLinfty}
  v = \left(\lambda I-\A_p\right)^{-1}g = \int_{0}^{\infty}\int_{\R^d}e^{-\lambda s}H_{\infty}(\cdot,\xi,s)g(\xi)d\xi ds.
\end{align}
The following a-priori estimate in exponentially weighted $L^p$-spaces is based on the integral expression \eqref{equ:IntegralRepresentationLinfty} and is taken from \cite[Theorem 5.7]{Otten2014a}.

\begin{theorem}[Existence and uniqueness in weighted $W^{1,p}$-spaces]\label{thm:APrioriEstimatesInLpConstantCoefficients}
  Let the assumptions \eqref{cond:A2}, \eqref{cond:A5} and \eqref{cond:A8B} be satisfied for $1\leqslant p<\infty$ and $\K=\C$, and let $0<\varepsilon<1$ 
  and $\lambda\in\C$ with $\Re\lambda>-\bzero$ be given. Moreover, let $\theta\in C(\R^d,\R)$ be a radially nondecreasing weight function of exponential growth rate 
  $\eta\geqslant 0$ with 
  \begin{align*}
    0\leqslant\eta^2\leqslant\varepsilon\frac{\azero(\Re\lambda+\bzero)}{\amax^2 p^2}.
  \end{align*} 
     Then, there exists a unique solution $v\in\D(\A_p)$ of the resolvent equation $(\lambda I-\A_p)v = g$ for every $g\in L^p_{\theta}(\R^d,\C^N)$. The solution satisfies 
  $v\in W^{1,p}_{\theta}(\R^d,\C^N)$ and the following estimates
  \begin{align}
        \left\|v\right\|_{L^p_{\theta}} \leqslant& \frac{C_{0,\varepsilon}}{\Re\lambda+\bzero}\left\|g\right\|_{L^p_{\theta}},                                             \label{equ:ExpDecStatVstar}\\
    \left\|D_i v\right\|_{L^p_{\theta}} \leqslant& \frac{C_{1,\varepsilon}}{\left(\Re\lambda+\bzero\right)^{\frac{1}{2}}}\left\|g\right\|_{L^p_{\theta}},\,i=1,\ldots,d,   \label{equ:ExpDecStatDiVstar}
  \end{align}
  where the $\lambda$-independent constants $C_{0,\varepsilon}$, $C_{1,\varepsilon}$ are given by
  \begin{align*}
    C_{0,\varepsilon} =& C_{\theta}\kappa \aone\left(\frac{\Gamma\left(\frac{d+1}{2}\right)}{\Gamma\left(\frac{d}{2}\right)}(\pi\varepsilon)^{\frac{1}{2}}(1-\varepsilon)^{-\frac{d+1}{2}}+{}_2F_1\left(\frac{d}{2},1;\frac{1}{2};\varepsilon\right)\right)^{\frac{1}{p}}, \\
    C_{1,\varepsilon} =& C_{\theta}\kappa\frac{\aone^{\frac{d+1}{d}}\pi^{\frac{1}{2}}}{\amin^{\frac{1}{2}}}
\left(\frac{\Gamma\left(\frac{d+1}{2}\right)}{\Gamma\left(\frac{d}{2}\right)}(1-\varepsilon)^{-\frac{d+1}{2}}+\frac{d 
\varepsilon^{\frac{1}{2}}}{\pi^{\frac{1}{2}}}{}_2F_1\left(\frac{d+1}{2},1;\frac{3}{2};\varepsilon\right)\right)^{\frac{1}{p}},
  \end{align*}
  with constants $\azero,\aone,\amin,\amax$ from \eqref{equ:aminamaxazerobzero}, $\bzero,\kappa$ from \eqref{equ:constant2} and $C_{\theta}$ from \eqref{equ:WeightFunctionProp2}.
\end{theorem}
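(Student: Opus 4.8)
Since $\theta$ is radially nondecreasing we have $\theta(x)=\phi(|x|)\geqslant\phi(0)=\theta(0)>0$, so $L^p_\theta(\R^d,\C^N)\subseteq L^p(\R^d,\C^N)$; hence for $g\in L^p_\theta$ the existence and uniqueness of $v\in\D(\A_p)$ solving $(\lambda I-\A_p)v=g$, together with the integral representation \eqref{equ:IntegralRepresentationLinfty}, are already supplied by the semigroup theory recalled above. The whole task is therefore the weighted a priori bound. The plan is to establish the two inequalities first for $g\in C_c^\infty(\R^d,\C^N)$ — where the $(x,\xi,s)$-integral in \eqref{equ:IntegralRepresentationLinfty} and its spatial derivatives converge absolutely and locally uniformly, so that differentiation under the integral sign and all applications of Fubini's theorem are justified — and then to pass to general $g\in L^p_\theta$ by density, the estimates themselves ensuring convergence of the approximants and their derivatives in $L^p_\theta$.

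The core is a pointwise bound on the weighted kernel. Writing $z:=e^{sS}x-\xi$ and using $x=e^{-sS}z+e^{-sS}\xi$, the radiality of $\theta$ together with \eqref{equ:WeightFunctionProp2} gives $\theta(x)\leqslant C_\theta\theta(\xi)e^{\eta|z|}$. Exploiting the common diagonalizing matrix $Y$ of $A$ and $B_\infty$ from \eqref{cond:A8B} — so that $\kappa=\cond(Y)$ governs all the matrix-exponential and matrix-power factors of $H_\infty$ at once — one obtains $\|H_\infty(x,\xi,s)\|\leqslant\kappa\aone e^{-\bzero s}\,G_s(z)$, where $G_s$ denotes the $L^1$-normalised Gaussian on $\R^d$ of variance of order $s\amax^2/\azero$; indeed the constant $\aone$ is precisely what is needed to absorb the mismatch between $\amin,\amax$ and $\azero$ in this normalisation. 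Hence $\|\theta(x)H_\infty(x,\xi,s)\|\leqslant C_\theta\kappa\aone\,\theta(\xi)\,e^{-\bzero s}\,e^{\eta|z|}G_s(z)$. Completing the square turns $e^{\eta|z|}G_s(z)$ into $e^{c\eta^2 s}$ times a slightly wider normalised Gaussian, with $c$ proportional to $\amax^2/\azero$; distributing the Gaussian by Hölder's inequality so that the $\xi$-integral may be run in $L^p$ rather than merely in $L^1$ is what produces the exponents $\tfrac1p$ in $C_{0,\varepsilon},C_{1,\varepsilon}$ and sharpens the admissible range of $\eta$ to $\eta^2\leqslant\varepsilon\azero(\Re\lambda+\bzero)/(\amax^2p^2)$; the residual Gaussian integrals against $1$ and against $e^{\eta|z|}$ evaluate to the $\Gamma$- and ${}_2F_1$-expressions appearing in the constants.

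Assembling the pieces, Minkowski's integral inequality in the $s$-variable together with Young's inequality (Schur test) for the $\xi$-convolution yields $\|\theta v\|_{L^p}\leqslant\big(\int_0^\infty e^{-(\Re\lambda)s}M(s)\,ds\big)\|\theta g\|_{L^p}$ with $M(s)\leqslant C_\theta\kappa\aone(\cdots)^{1/p}e^{-\bzero s}e^{c\eta^2 s}$, and the $s$-integral is $(\Re\lambda+\bzero-c\eta^2)^{-1}\leqslant\big((1-\varepsilon)(\Re\lambda+\bzero)\big)^{-1}$ under the hypothesis on $\eta$, which is \eqref{equ:ExpDecStatVstar}. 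For \eqref{equ:ExpDecStatDiVstar} I would differentiate \eqref{equ:IntegralRepresentationLinfty} under the integral: $D_iH_\infty$ picks up an extra algebraic factor of size $O(|z|/s)$ whose Gaussian integral contributes an additional $s^{-1/2}$, so the $s$-integral becomes $\int_0^\infty s^{-1/2}e^{-(\Re\lambda+\bzero-c\eta^2)s}\,ds$, of order $(\Re\lambda+\bzero)^{-1/2}$, which explains the square root in \eqref{equ:ExpDecStatDiVstar}; the $\xi$-integration is handled exactly as before. The $C_c^\infty$-density argument then identifies the resulting functions as the weak derivatives of $v$, so $v\in W^{1,p}_\theta(\R^d,\C^N)$.

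\textbf{Main obstacle.} The delicate step is the kernel estimate itself: one must balance the exponential weight $e^{\eta|z|}$ against the anisotropic complex Gaussian $\exp(-(4sA)^{-1}|z|^2)$ uniformly over all $s\in(0,\infty)$ — extracting the sharp constants $\kappa,\aone,\amin,\amax,\azero$ from the simultaneous diagonalization, and controlling simultaneously the $s\downarrow0$ collapse of the kernel to a Dirac mass and the $s\to\infty$ competition between the gain $e^{-\bzero s}$ and the loss $e^{c\eta^2 s}$ — and, most subtly, distributing the Gaussian factors via Hölder so that the resulting bound is the $L^p\!\to\!L^p$ operator norm with the precise threshold $\eta^2\leqslant\varepsilon\azero(\Re\lambda+\bzero)/(\amax^2p^2)$ rather than a cruder one obtained from an $L^1$-normalisation alone.
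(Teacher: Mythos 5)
Your outline is correct and follows essentially the route the paper itself indicates: Theorem \ref{thm:APrioriEstimatesInLpConstantCoefficients} is not proved in this paper but imported from \cite[Theorem 5.7]{Otten2014a}, whose argument is stated to rest on the integral representation \eqref{equ:IntegralRepresentationLinfty}, and the steps you reconstruct --- $L^p_{\theta}\subseteq L^p$ for solvability, the weighted kernel bound $\theta(x)\leqslant C_{\theta}\theta(\xi)e^{\eta|e^{sS}x-\xi|}$ from radiality and \eqref{equ:WeightFunctionProp2}, the diagonalization constants $\kappa,\aone$, completion of the square to trade $e^{\eta|z|}$ for $e^{c\eta^2 s}$, the H\"older/Schur splitting that produces the $p^2$ in the admissible range of $\eta$, and the extra $s^{-\frac{1}{2}}$ from $D_iH_{\infty}$ --- are exactly the ones that force the stated form of $C_{0,\varepsilon}$ and $C_{1,\varepsilon}$. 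The only part left implicit is the explicit evaluation of the residual Gaussian integrals giving the $\Gamma$- and ${}_2F_1$-expressions, which you correctly identify as the source of those constants.
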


\begin{remark}
 Above
we used the hypergeometric function ${}_2F_1$, see \cite[15.4]{OlverLozierBoisvertClark2010}. Moreover, we  modified the original constants from \cite[Theorem 5.7]{Otten2014a} by using 
  ${}_2F_1\left(a,b;b;z\right)=(1-z)^{-a}$ from \cite[(15.4.6)]{OlverLozierBoisvertClark2010} and the Pfaff transformation ${}_2F_1\left(a,b;c,z\right)=(1-z)^{-b}{}_2F_1\left(c-a,b;c;\frac{z}{z-1}\right)$ for $z\in \C \, \setminus\, [1,\infty)$. Note that both quantities
${}_2F_1\left(\frac{d}{2},1;\frac{1}{2};\varepsilon\right)$ and
${}_2F_1\left(\frac{d+1}{2},1;\frac{3}{2};\varepsilon\right)$ behave like $(1-\varepsilon)^{-\frac{d+1}{2}}$ as $\varepsilon \rightarrow 1$
(\cite[(15.4.23)]{OlverLozierBoisvertClark2010}), which
then also determines the behavior of  the constants
 $C_{0,\varepsilon}$ and $C_{1,\varepsilon}$.
\end{remark}

So far, we neither have an explicit representation for the maximal domain $\D(\A_p)$ in terms of Sobolev spaces, nor do we have the relation between the generator $\A_p$ and the differential 
operator $\L_{\infty}$. For this purpose, one has to solve the identification problem, which has been done in \cite{Otten2015a}. 
Assuming \eqref{cond:A2}, \eqref{cond:A5} and \eqref{cond:A8B} for $\K=\C$, it is proved in \cite[Theorem 3.2]{Otten2015a} that the Schwartz space 
$\S(\R^d,\C^N)$ is a core of the infinitesimal generator $\left(\A_p,\D(\A_p)\right)$ for any $1\leqslant p<\infty$. Next, one considers the operator 
$\L_{\infty}:L^p(\R^d,\C^N)\supseteq\D^p_{\mathrm{loc}}(\L_0)\rightarrow L^p(\R^d,\C^N)$ on its domain 
\begin{align} \label{eq:domainL0}
  \D^p_{\mathrm{loc}}(\L_0):=\left\{v\in W^{2,p}_{\mathrm{loc}}(\R^d,\C^N)\cap L^p(\R^d,\C^N)\mid A\triangle v+\left\langle S\cdot,\nabla v\right\rangle\in L^p(\R^d,\C^N)\right\}.
\end{align} 
Under the assumption \eqref{cond:A3} for $\K=\C$, it is shown in \cite[Lemma 4.1]{Otten2015a} that $\left(\L_{\infty},\D^p_{\mathrm{loc}}(\L_0)\right)$ 
is a closed operator in $L^p(\R^d,\C^N)$ for any $1<p<\infty$.
Then the $L^p$-dissipativity 
condition \eqref{cond:A4DC} is the key assumption which leads to
an energy estimate for the resolvent with respect to the $L^p$-norm, see \cite[Theorem 4.4]{Otten2015a}.
The same argument reappears in Theorem \ref{thm:WeightedResolventEstimates} below which is an extension of \cite[Theorem 4.4]{Otten2015a}. 
As a direct consequence, the operator $\L_{\infty}$ is dissipative in $L^p(\R^d,\C^N)$, provided 
$\beta_{\infty}$ from Assumption \eqref{cond:A10B} satisfies $\beta_{\infty}\leqslant 0$, \cite[Corollary 4.6]{Otten2015a}. 
Combining these results one can solve 
the identification problem for $\L_{\infty}$ as follows (see \cite[Theorem 5.1]{Otten2015a}).

\begin{theorem}[Maximal domain, local version]\label{thm:LpMaximalDomainPart1}
  Let the assumptions \eqref{cond:A4DC}, \eqref{cond:A5} and \eqref{cond:A8B} be satisfied for $1<p<\infty$ and $\K=\C$, then 
  \begin{align*}
    \D(\A_p)=\D^p_{\mathrm{loc}}(\L_0)
  \end{align*} 
  is the maximal domain of $\A_p$, where $\D^p_{\mathrm{loc}}(\L_0)$ is defined by
\eqref{eq:domainL0}.  
  In particular, $\A_p$ is the maximal realization of $\L_{\infty}$ in $L^p(\R^d,\C^N)$, i.e. 
  \begin{align*}
    \A_p v = \L_{\infty} v\quad\forall\,v\in\D(\A_p).
  \end{align*}
\end{theorem}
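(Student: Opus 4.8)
The plan is to prove the two set inclusions $\D(\A_p)\subseteq\D^p_{\mathrm{loc}}(\L_0)$ and $\D^p_{\mathrm{loc}}(\L_0)\subseteq\D(\A_p)$ separately, the first by a closedness-and-core argument and the second by combining surjectivity of $\lambda I-\A_p$ (from the semigroup) with injectivity of $\lambda I-\L_\infty$ on $\D^p_{\mathrm{loc}}(\L_0)$ (from the $L^p$-energy estimate).

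\textbf{First inclusion.} Recall from \cite[Theorem 3.2]{Otten2015a} that $\S(\R^d,\C^N)$ is a core for $(\A_p,\D(\A_p))$, that $\S(\R^d,\C^N)\subseteq\D^p_{\mathrm{loc}}(\L_0)$ (since $A\triangle v$ and $\langle S\cdot,\nabla v\rangle$ are again rapidly decaying for $v\in\S$), and that $\A_p v=\L_\infty v$ for $v\in\S(\R^d,\C^N)$ — the latter by differentiating $t\mapsto T_\infty(t)v$ at $t=0$ in $L^p$ using the kernel representation \eqref{equ:OrnsteinUhlenbeckSemigroupLp}--\eqref{equ:HeatKernel}. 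Recall also that $(\L_\infty,\D^p_{\mathrm{loc}}(\L_0))$ is closed in $L^p(\R^d,\C^N)$ by \cite[Lemma 4.1]{Otten2015a}. Now, given $v\in\D(\A_p)$, the core property supplies $v_n\in\S(\R^d,\C^N)$ with $v_n\to v$ and $\A_p v_n\to\A_p v$ in $L^p$; since $\A_p v_n=\L_\infty v_n$ and $\L_\infty$ is closed on $\D^p_{\mathrm{loc}}(\L_0)$, we conclude $v\in\D^p_{\mathrm{loc}}(\L_0)$ and $\L_\infty v=\A_p v$.

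\textbf{Second inclusion.} Fix $\lambda\in\C$ with $\Re\lambda>-\bzero$; by \eqref{equ:ResolventEquationAp} the operator $\lambda I-\A_p\colon\D(\A_p)\to L^p(\R^d,\C^N)$ is bijective. Let $v\in\D^p_{\mathrm{loc}}(\L_0)$ be arbitrary, put $g:=\lambda v-\L_\infty v\in L^p(\R^d,\C^N)$, and pick $\tilde v\in\D(\A_p)$ with $(\lambda I-\A_p)\tilde v=g$. By the first inclusion $\tilde v\in\D^p_{\mathrm{loc}}(\L_0)$ with $\A_p\tilde v=\L_\infty\tilde v$, so $w:=v-\tilde v\in\D^p_{\mathrm{loc}}(\L_0)$ solves $(\lambda I-\L_\infty)w=0$. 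It remains to show $w=0$, which is where the $L^p$-dissipativity condition \eqref{cond:A4DC} enters through the a priori estimate of \cite[Theorem 4.4]{Otten2015a}: testing $(\lambda I-\L_\infty)w=0$ against the (regularized) duality map $|w|^{p-2}\overline w$, integrating over $\R^d$ and taking real parts, the diffusion term contributes a nonpositive quantity by \eqref{cond:A4DC}, the drift term vanishes because $\Re\langle w,\langle Sx,\nabla w\rangle\rangle=\tfrac12\langle Sx,\nabla|w|^2\rangle$ and $\operatorname{div}(Sx)=\trace S=0$, and the zero-order term yields $\geqslant\beta_\infty\|w\|_{L^p}^p$ by \eqref{cond:A10B}; hence $(\Re\lambda+\beta_\infty)\|w\|_{L^p}^p\leqslant0$, and since $\Re\lambda>-\bzero\geqslant-\beta_\infty$ this forces $w=0$. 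Thus $v=\tilde v\in\D(\A_p)$. Combining the two inclusions gives $\D(\A_p)=\D^p_{\mathrm{loc}}(\L_0)$, and the first inclusion already gave $\A_p v=\L_\infty v$ on this common domain; maximality of the realization is then immediate, since $\D(\A_p)$ is by definition the largest subspace of $L^p(\R^d,\C^N)$ on which the generator limit exists.

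\textbf{Main obstacle.} The genuinely delicate ingredient is the injectivity of $\lambda I-\L_\infty$ on $\D^p_{\mathrm{loc}}(\L_0)$, i.e.\ the a priori $L^p$-estimate: the drift $\langle Sx,\nabla v\rangle$ has unbounded coefficients and is not a lower-order perturbation, so the formal energy identity above must be justified by a truncation argument (cutting off in $|x|$ and controlling the commutator terms generated by the drift), and the duality map must be regularized near the zero set of $w$ when $p<2$. This is exactly \cite[Theorem 4.4]{Otten2015a}; once it is available, the identification argument is short and essentially formal.
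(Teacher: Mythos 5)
Your proof is correct and follows exactly the route the paper indicates: the paper does not prove Theorem \ref{thm:LpMaximalDomainPart1} itself but cites \cite[Theorem 5.1]{Otten2015a}, and its surrounding discussion in Section \ref{subsec:OrnsteinUhlenbeckOperator} names precisely your three ingredients (the Schwartz space as a core of $\A_p$, closedness of $(\L_{\infty},\D^p_{\mathrm{loc}}(\L_0))$, and the $L^p$-dissipativity energy estimate giving injectivity of $\lambda I-\L_{\infty}$), so your two-inclusion argument is essentially the intended one. One small slip: the inequality $-\bzero\geqslant-\beta_{\infty}$ you invoke to conclude $w=0$ is backwards in general (eigenvalues lie in the numerical range, so $\beta_{\infty}\leqslant\bzero$ --- a relation the paper itself mis-states after \eqref{equ:aminamaxazerobzero}); this is harmless here, since the second inclusion only requires a single $\lambda$ for which $\lambda I-\A_p$ is surjective and $\lambda I-\L_{\infty}$ is injective, and any $\lambda$ with $\Re\lambda>-\beta_{\infty}$ (hence also $\Re\lambda>-\bzero$) serves.
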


Theorem \ref{thm:LpMaximalDomainPart1} shows that, if we restrict $1<p<\infty$ and replace \eqref{cond:A2} by the stronger assumption \eqref{cond:A4DC} in Theorem 
\ref{thm:APrioriEstimatesInLpConstantCoefficients}, we can write $\L_{\infty}$ and $\D^p_{\mathrm{loc}}(\L_0)$ instead of $\A_p$ and $\D(\A_p)$. This will be crucial 
in the proof of Theorem \ref{thm:APrioriEstimatesInLpSmallPerturbation}
below.
Moreover, we stress again that it is this theorem into which
the $L^p$-dissipativity condition \eqref{cond:A4DC} enters,
see the comments following Assumption \ref{ass:Assumption1}.

\subsection{Bootstrapping and regularity.}
\label{subsec:2.4}

In Section \ref{sec:3} we study the variable coefficient operator
\begin{equation} \label{equ:varcoeffop}
  \left[\L_Q v\right](x) = A\triangle v(x) + \left\langle Sx,\nabla v(x)\right\rangle - B_{\infty}v(x) + Q(x)v(x),\,x\in\R^d,
\end{equation}
and its resolvent equation
\begin{align}
  \label{equ:REQ}
  \left(\lambda I-\L_Q\right) v = g, \quad
\text{in} \; L^p(\R^d,\C^N) 
\end{align}
for $1<p<\infty$ and for different choices of $Q\in L^{\infty}(\R^d,\C^{N,N})$. 
In Section \ref{subsec:3.1}, we first derive an existence and uniqueness result for the resolvent equation \eqref{equ:REQ} in $L^p(\R^d,\C^N)$ for general $Q$ 
(Theorem \ref{thm:LpSolvabilityUniquenessBoundedPerturbation}). The proof uses the standard bounded perturbation theorem from 
abstract semigroup theory as well as Theorem \ref{thm:LpMaximalDomainPart1}. 
In Section \ref{subsec:3.2}, we then analyze the resolvent equation \eqref{equ:REQ} for perturbations $Q=Q_{\mathrm{s}}$ which are small
w.r.t. $\left\|\cdot\right\|_{L^{\infty}}$. We prove that the unique solution of \eqref{equ:REQ} in $L^p(\R^d,\C^N)$ decays exponentially 
if the inhomogeneity $g$ does (Theorem \ref{thm:APrioriEstimatesInLpSmallPerturbation}). The proof is based on a fixed point argument and uses the results 
from Theorem \ref{thm:LpSolvabilityUniquenessBoundedPerturbation} and Theorem \ref{thm:APrioriEstimatesInLpConstantCoefficients}. 
In Section \ref{subsec:3.3}, we study differential operators of the form
\begin{align*}
  \left[\L_B v\right](x) = A\triangle v(x) + \left\langle Sx,\nabla v(x)\right\rangle - B(x)v(x),\,x\in\R^d,
\end{align*}
where the matrix-valued function $B\in L^{\infty}(\R^d,\C^{N,N})$ satisfies
\begin{align*}
  \Re\left\langle w,B(x)w\right\rangle \geqslant c_B |w|^2\;\forall\,x\in\R^d\;\forall\,w\in\C^N
\end{align*}
for some constant $c_B\in\R$.  We consider two different weight functions
$\theta_1,\theta_2$ satisfying $\theta_1 \le C \theta_2$, so that
$L^p_{\theta_2}(\R^d,\C^N)\subseteq L^p_{\theta_1}(\R^d,\C^N)$, e.g.
$\theta_2$ may grow while $\theta_1$ decays. Then
we prove uniqueness of solutions $v$ of $(\lambda I-\L_B)v=g$ in the large space $W^{2,p}_{\mathrm{loc}}(\R^d,\C^N)\cap L^p_{\theta_1}(\R^d,\C^N)$ 
if $g$ is in the small space $ L^p_{\theta_2}(\R^d,\C^N)$, and we derive resolvent estimates (Theorem \ref{thm:WeightedResolventEstimates}). 
The proof generalizes the approach from \cite[Theorem 5.13]{Otten2014} to variable coefficient 
perturbations and weighted spaces. In Section \ref{subsec:3.4} we study the resolvent equation \eqref{equ:REQ} for asymptotically small variable coefficient 
matrices $Q$. We prove that if $|Q(x)|$ falls below a certain threshold at infinity, then every solution $v\in W^{2,p}_{\mathrm{loc}}(\R^d,\C^N)\cap L^p_{\theta_1}(\R^d,\C^N)$ of
\eqref{equ:REQ} in $L^p_{\mathrm{loc}}(\R^d,\C^N)$ already belongs to the small
space $W^{1,p}_{\theta_2}(\R^d,\C^N)$ if $g\in L^p_{\theta_2}(\R^d,\C^N)$, and $\Re\lambda>-\beta_{\infty}$ 
(Theorem \ref{thm:APrioriEstimatesInLpRelativelyCompactPerturbation}). The idea of the proof is to decompose $Q$ into $Q=Q_{\mathrm{s}}+Q_{\mathrm{c}}$, where 
$Q_{\mathrm{s}}\in L^{\infty}(\R^d,\C^N)$ is small w.r.t. $\left\|\cdot\right\|_{L^{\infty}}$ and $Q_{\mathrm{c}}$ is compactly supported on $\R^d$. Then, Theorem 
\ref{thm:APrioriEstimatesInLpSmallPerturbation} implies the existence of a solution in the smaller space $W^{2,p}_{\mathrm{loc}}(\R^d,\C^N)\cap L^p_{\theta_2}(\R^d,\C^N)$ and 
Theorem \ref{thm:WeightedResolventEstimates} yields the uniqueness in the larger space $W^{2,p}_{\mathrm{loc}}(\R^d,\C^N)\cap L^p_{\theta_1}(\R^d,\C^N)$. Note that 
Theorem \ref{thm:APrioriEstimatesInLpRelativelyCompactPerturbation} is the core theorem which allows us to analyze exponential decay for both, solutions of the nonlinear problem 
and solutions of the eigenvalue problem for $\L$.

In Section \ref{sec:4} we prove spatial exponential decay for bounded solutions of the nonlinear problem \eqref{equ:FarField1}
by employing a bootstrapping argument to the linear equation
\eqref{equ:FarField2}. Shifting the term with the compactly supported coefficient
to the right-hand side, we obtain an inhomogeneity which lies in any
weighted $L^p$-space. Applying the previous linear theory then
provides exponential decay in space
provided the difference $|v_{\star}(x)-v_{\infty}|$ falls below a certain threshold at 
infinity. In a second step, assuming additional regularity of the nonlinearity $f$ and the solution $v_{\star}$, we 
show that the higher order derivatives also decay exponentially in space (Corollary \ref{cor:NonlinearOrnsteinUhlenbeckSteadyStateMoreRegularity}, 
Remark \ref{rem:HigherRegularity})
  \begin{equation} \label{equ:decaysobolev}
v_{\star}-v_{\infty}\in W^{k,p}_{\theta}(\R^d,\R^N) \quad \text{if} 
    \quad f\in C^{\max\{2,k-1\}}(\R^N,\R^N),\;v_{\star}\in C^{k+1}(\R^d,\R^N).
 \end{equation}
This holds for $k\in\N$ and $p\geqslant\frac{d}{2}$ in case $k\geqslant 3$, where $p$ is from \eqref{cond:A4DC}.
In Section \ref{subsec:4.3} we combine this result with Sobolev embeddings to deduce that $v_{\star}-v_{\infty}$ satisfies 
exponentially weighted pointwise estimates (Corollary \ref{cor:pointwise}) 
\begin{align}\label{eq:derivpointwise}
  \left|D^{\alpha}\left(v_{\star}(x)-v_{\infty}\right)\right| \leqslant C\exp\left(-\mu\sqrt{|x|^2+1}\right)
  \quad\forall\,x\in\R^d,\;0\leqslant\mu\leqslant\varepsilon\frac{\sqrt{\azero\bzero}}{\amax p}
\end{align}
and for every multi-index $\alpha\in\N_0^d$ with $d<(k-|\alpha|)p$. In Section \ref{subsec:4.4} we extend our main result from Theorem 
\ref{thm:NonlinearOrnsteinUhlenbeckSteadyState}, Corollary \ref{cor:NonlinearOrnsteinUhlenbeckSteadyStateMoreRegularity} and Corollary 
\ref{cor:pointwise} to complex-valued systems with $f$ as in \eqref{equ:complexversion} (Corollary \ref{cor:NonlinearOrnsteinUhlenbeckSteadyStateComplexVersion}). 

In Section \ref{sec:5} we study spatial exponential decay for solutions of the eigenvalue problem
\begin{align}
  \label{equ:EP}
  A\triangle v(x) + \left\langle Sx,\nabla v(x)\right\rangle + Df(v_{\star}(x))v(x) = \lambda v(x),\,x\in\R^d,\,d\geqslant 2.
\end{align}
In Section \ref{subsec:5.1} we show that every bounded classical solution $v$ of \eqref{equ:EP} decays exponentially in space, in the sense that 
$v$ belongs to $W^{1,p}_{\theta}(\R^d,\R^N)$, provided that its associated eigenvalue $\lambda\in\C$ satisfies $\Re\lambda>-\beta_{\infty}$. 
In Section \ref{subsec:5.2} we apply our result from Section \ref{subsec:5.1} to those eigenfunctions which belong to eigenvalues  on the imaginary axis.
These eigenfunctions are due to equivariance with respect to the action
of the Euclidean group and can be calculated explicitly in terms of
the profile $v_{\star}$, see Theorem \ref{thm:EigenfunctionsOfTheLinearizedOrnsteinUhlenbeckInLp}.
 In particular, this yields exponential decay of the eigenfunction $v(x)=\left\langle Sx,\nabla v_{\star}(x)\right\rangle, x \in \R^d$ 
associated with the eigenvalue $\lambda=0$.
As in the nonlinear case we proceed with proving exponential decay
of derivatives of eigenfunctions, first in Sobolev spaces and then in a pointwise sense as in \eqref{eq:derivpointwise},
see Theorem \ref{thm:LinearizedOrnsteinUhlenbeckExponentialDecayInLp}.

In Section \ref{sec:6} we apply the theory to so called spinning solitons of the cubic-quintic complex Ginzburg-Laundau equation (QCGL)
\begin{align*}
  u_t = \alpha\triangle u + u\left(\delta+\beta|u|^2+\gamma|u|^4\right)u,
\end{align*}
where $u:\R^d\times[0,\infty)\rightarrow\C$, $d\in\{2,3\}$ and $\alpha,\beta,\gamma,\delta\in\C$ with $\Re\alpha>0$. We derive suitable 
conditions on  the parameters $\alpha,\beta,\gamma,\delta\in\C$ such
that  Theorem 
\ref{thm:APrioriEstimatesInLpConstantCoefficients} and Corollary \ref{cor:NonlinearOrnsteinUhlenbeckSteadyStateComplexVersion} apply.
 In Section \ref{subsec:6.1} we compute the
profile and (angular) speed of the spinning 
solitons. 
In Section \ref{subsec:6.2} we compute spectra and eigenfunctions of the associated eigenvalue problem. 
In Section \ref{subsec:6.3} we compare in a final step
the theoretical decay rates with numerical rates obtained from numerical
data on a large ball. It turns out that the theoretical bounds are
surprisingly close to the values found from numerical computations.

\textbf{Acknowledgment.} The authors are grateful to Jens Lorenz for useful
discussions during the preparatory stages of this work.

%
%
\sect{Variable coefficient complex Ornstein-Uhlenbeck operators}
\label{sec:3}

In this section we analyze the resolvent equation of the differential operator
\begin{equation} \label{equ:operatorLQ}
  \left[\L_Q v\right](x) = A\triangle v(x) + \left\langle Sx,\nabla v(x)\right\rangle - B_{\infty}v(x) + Q(x)v(x),\,x\in\R^d,
\end{equation}
in $L^p(\R^d,\C^N)$ for $1<p<\infty$ and for different choices of $Q\in L^{\infty}(\R^d,\C^{N,N})$. 

\subsection{Solvability and uniqueness of the resolvent equation}
\label{subsec:3.1}

Let us assume \eqref{cond:A2}, \eqref{cond:A5} and \eqref{cond:A8B} for $\K=\C$ and let $(\A_p,\D(\A_p))$ denote the generator of the 
strongly continuous semigroup $(T_{\infty}(t))_{t\geqslant 0}$ from Section \ref{subsec:OrnsteinUhlenbeckOperator} on 
$L^p(\R^d,\C^N)$ for some $1\leqslant p<\infty$.
Let us introduce the bounded operator
\begin{align*}
  \Q_p:L^p(\R^d,\C^N)\rightarrow L^p(\R^d,\C^N)\quad\text{with}\quad \left[\Q_p v\right](x) := Q(x)v(x),\;x\in\R^d.
\end{align*}
Then the bounded perturbation theorem \cite[III.1.3]{EngelNagel2000} implies that
\begin{align*}
  \B_p := \A_p + \Q_p\quad\text{with}\quad \D(\B_p):=\D(\A_p)
\end{align*}
generates a strongly continuous semigroup $(T_Q(t))_{t\geqslant 0}$ in $L^p(\R^d,\C^N)$  satisfying
\begin{align*}
            \left\|T_Q(t)v\right\|_{L^p}
  \leqslant \kappa\aone e^{(-\bzero+\kappa\aone\left\|\Q_p\right\|)t}\left\|v\right\|_{L^p}
  \leqslant \kappa\aone e^{(-\bzero+\kappa\aone\left\|Q\right\|_{L^{\infty}})t}\left\|v\right\|_{L^p}\;\forall\,t\geqslant 0,
\end{align*}
where we used \eqref{equ:LpSemigroupBound} and the estimate $\left\|\Q_p\right\|\leqslant\left\|Q\right\|_{L^{\infty}}$
of the $L^p$-operator norm. 
Then an application of \cite[II.1.10]{EngelNagel2000} yields that the resolvent equation
\begin{align}
  \label{equ:ResolventEquationBp}
  \left(\lambda I-\B_p\right)v=g,\,\text{in }L^p(\R^d,\C^N)
\end{align}
for $\lambda\in\C$ with $\Re\lambda>-\bzero+\kappa\aone\left\|Q\right\|_{L^{\infty}}$ and $g\in L^p(\R^d,\C^N)$ admits a unique solution $v\in\D(\A_p)$ 
which satisfies the resolvent estimate
\begin{align*}
  \left\|v\right\|_{L^p}\leqslant \frac{\kappa\aone}{\Re\lambda-\left(-\bzero+\kappa\aone\left\|Q\right\|_{L^{\infty}}\right)}\left\|g\right\|_{L^p}.
\end{align*}
If we restrict $1<p<\infty$ and assume the stronger assumption \eqref{cond:A4DC} (or equivalently \eqref{cond:A4}) instead of \eqref{cond:A2}, an application 
of Theorem \ref{thm:LpMaximalDomainPart1} yields that
  $\D(\B_p) := \D(\A_p) = \D^p_{\mathrm{loc}}(\L_0)$
and
 $ \B_p v := \A_p v + \Q_p v = \L_{\infty}v + Qv = \L_Q$ for all $v\in\D(\B_p)$.
Therefore we can write in the following $\L_Q$ and $\D^p_{\mathrm{loc}}(\L_0)$ instead of $\B_p$ and $\D(\B_p)$. Summarizing,
we obtain the following result.
\begin{theorem}[Existence and uniqueness in weighted $L^p$-spaces]\label{thm:LpSolvabilityUniquenessBoundedPerturbation}
  Let the assumptions \eqref{cond:A4DC}, \eqref{cond:A5}, \eqref{cond:A8B} and $Q\in L^{\infty}(\R^d,\C^{N,N})$ be satisfied 
  for $1<p<\infty$ and $\K=\C$. Moreover, with constants $\aone$ from \eqref{equ:aminamaxazerobzero}, $\bzero,\kappa$ from \eqref{equ:constant2}, 
  let $\omega := -\bzero+\kappa\aone\left\|Q\right\|_{L^{\infty}}$
  and $\lambda\in\C$ with $\Re\lambda>\omega$ be given. Then, for every $g\in L^p(\R^d,\C^N)$ the resolvent equation
  \begin{align*}
    \left(\lambda I-\L_Q\right)v = g
  \end{align*}
  admits a unique solution $v\in\D^p_{\mathrm{loc}}(\L_0)$.
  Moreover, the following resolvent estimate holds:
  \begin{align*}
    \left\|v\right\|_{L^p}\leqslant \frac{\kappa\aone}{\Re\lambda-\omega}\left\|g\right\|_{L^p}.
  \end{align*}
\end{theorem}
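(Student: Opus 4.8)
The plan is to regard $\L_Q$ as a bounded, multiplicative perturbation of the generator $\A_p$ of the perturbed Ornstein--Uhlenbeck semigroup $(T_{\infty}(t))_{t\geqslant 0}$, to read off the growth bound of the perturbed semigroup from the unperturbed one, and finally to transfer the abstract semigroup estimate to the concrete resolvent equation and identify the abstract generator with the differential operator $\L_Q$.

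First I would recall from Section \ref{subsec:OrnsteinUhlenbeckOperator} that, under \eqref{cond:A2}, \eqref{cond:A5} and \eqref{cond:A8B}, the operator $\A_p$ generates the strongly continuous semigroup $(T_{\infty}(t))_{t\geqslant 0}$ on $L^p(\R^d,\C^N)$ with the bound \eqref{equ:LpSemigroupBound}, i.e.\ $\|T_{\infty}(t)\|\leqslant\kappa\aone e^{-\bzero t}$ for all $t\geqslant 0$. Since $Q\in L^{\infty}(\R^d,\C^{N,N})$, the multiplication operator $\Q_p v=Q(\cdot)v(\cdot)$ is bounded on $L^p(\R^d,\C^N)$ with $\|\Q_p\|\leqslant\|Q\|_{L^{\infty}}$. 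Hence the bounded perturbation theorem \cite[III.1.3]{EngelNagel2000} applies and yields that $\B_p:=\A_p+\Q_p$ with $\D(\B_p):=\D(\A_p)$ generates a strongly continuous semigroup $(T_Q(t))_{t\geqslant 0}$ satisfying
\[
  \|T_Q(t)\|\leqslant\kappa\aone e^{(-\bzero+\kappa\aone\|\Q_p\|)t}\leqslant\kappa\aone e^{\omega t},\qquad \omega=-\bzero+\kappa\aone\|Q\|_{L^{\infty}} .
\]

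Next, for $\lambda\in\C$ with $\Re\lambda>\omega$ the integral $\int_{0}^{\infty}e^{-\lambda t}T_Q(t)\,dt$ converges absolutely in operator norm and represents the resolvent $(\lambda I-\B_p)^{-1}$; consequently the resolvent equation $(\lambda I-\B_p)v=g$ has, for every $g\in L^p(\R^d,\C^N)$, a unique solution $v\in\D(\B_p)=\D(\A_p)$, obeying $\|v\|_{L^p}\leqslant\int_0^{\infty}e^{-\Re\lambda\,t}\|T_Q(t)\|\,dt\,\|g\|_{L^p}\leqslant\frac{\kappa\aone}{\Re\lambda-\omega}\|g\|_{L^p}$, which is precisely \cite[II.1.10]{EngelNagel2000}. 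The last step is the identification of $\B_p$ with $\L_Q$: restricting to $1<p<\infty$ and strengthening \eqref{cond:A2} to \eqref{cond:A4DC}, Theorem \ref{thm:LpMaximalDomainPart1} gives $\D(\A_p)=\D^p_{\mathrm{loc}}(\L_0)$ together with $\A_p v=\L_{\infty}v$ on this domain, whence $\B_p v=\A_p v+\Q_p v=\L_{\infty}v+Q(\cdot)v=\L_Q v$ for every $v\in\D^p_{\mathrm{loc}}(\L_0)$. Substituting $\L_Q$ and $\D^p_{\mathrm{loc}}(\L_0)$ for $\B_p$ and $\D(\B_p)$ in the two preceding conclusions finishes the proof.

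There is no genuine analytic difficulty here: the proof is a routine application of the bounded perturbation theorem combined with the identification result Theorem \ref{thm:LpMaximalDomainPart1}. The only mildly technical points are the operator-norm estimate $\|\Q_p\|\leqslant\|Q\|_{L^{\infty}}$ for the multiplication operator and the standard passage from the exponential semigroup bound to the resolvent estimate via the Laplace transform; the real content of the statement is carried by Theorem \ref{thm:LpMaximalDomainPart1}, which is used here as a black box.
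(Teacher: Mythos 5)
Your proof is correct and follows essentially the same route as the paper: apply the bounded perturbation theorem to $\A_p + \Q_p$, transfer the exponential semigroup bound to the resolvent via \cite[II.1.10]{EngelNagel2000}, and then invoke Theorem \ref{thm:LpMaximalDomainPart1} to identify $\B_p$ with $\L_Q$ on $\D^p_{\mathrm{loc}}(\L_0)$. No gaps.
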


\subsection{Exponential decay for small perturbations}
\label{subsec:3.2}
In the following we use the constants $\azero,\aone,\amax,\amin$ from \eqref{equ:aminamaxazerobzero}, $\bzero,\kappa$ from \eqref{equ:constant2}
and $C_{0,\varepsilon},C_{1,\varepsilon}$ from Theorem \ref{thm:APrioriEstimatesInLpConstantCoefficients} without further reference.

\begin{theorem}[Existence and uniqueness in weighted $W^{1,p}$-spaces]\label{thm:APrioriEstimatesInLpSmallPerturbation}
  Let the assumptions \eqref{cond:A4DC}, \eqref{cond:A5}, \eqref{cond:A8B} and \eqref{cond:A9B} be satisfied for $1<p<\infty$ and $\K=\C$. 
  Moreover, let $0<\varepsilon<1$, $\theta\in C(\R^d,\R)$ be a radially nondecreasing weight function of exponential growth rate 
  \begin{align} \label{equ:etarestrict}
    0\leqslant\eta\leqslant\varepsilon\frac{\sqrt{\azero\bzero}}{\amax p},
  \end{align}
  and let $Q_{\mathrm{s}}\in L^{\infty}(\R^d,\C^{N,N})$ satisfy 
  \begin{align} \label{equ:Qsmall}
    \left\|Q_{\mathrm{s}}\right\|_{L^{\infty}}\leqslant
    \frac{\varepsilon b_0}{2}\min\left\{\frac{1}{\kappa\aone},\frac{1}{C_{0,\varepsilon}}\right\}.
  \end{align} 
  Further,  let $\lambda\in\C$ with $\Re\lambda\geqslant -(1-\varepsilon)\bzero$ and $g\in L^p_{\theta}(\R^d,\C^N)$. \\
  Then there exists a unique solution $v\in\D^p_{\mathrm{loc}}(\L_0)$ of the resolvent equation $(\lambda I-\L_{\mathrm{s}})v = g$ in $L^p(\R^d,\C^N)$ 
  which satisfies $v\in W^{1,p}_{\theta}(\R^d,\C^N)$. Moreover, the following estimates hold:
  \begin{align}
    \left\|v\right\|_{L^p_{\theta}} \leqslant& \frac{2C_{0,\varepsilon}}{\Re\lambda+\bzero}\left\|g\right\|_{L^p_{\theta}},                                                     
    \label{equ:ExpDecStatVstarBoundedPerturbation}\\
    \left\|D_i v \right\|_{L^p_{\theta}} \leqslant& \frac{2 C_{1,\varepsilon}}{\left(\Re\lambda+\bzero\right)^{\frac{1}{2}}}\left\|g\right\|_{L^p_{\theta}},\,i=1,\ldots,d.   
    \label{equ:ExpDecStatDiVstarBoundedPerturbation}
  \end{align}
 \end{theorem}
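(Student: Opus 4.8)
The plan is to recast the resolvent equation $(\lambda I-\L_{\mathrm{s}})v=g$, where $\L_{\mathrm{s}}=\L_{\infty}+Q_{\mathrm{s}}$ with $Q_{\mathrm{s}}$ acting by multiplication, as a fixed point problem for the \emph{constant}-coefficient resolvent. Using Theorem \ref{thm:LpMaximalDomainPart1} to identify $\A_p$ with $\L_{\infty}$ on $\D^p_{\mathrm{loc}}(\L_0)$, the equation is equivalent to $(\lambda I-\A_p)v=g+Q_{\mathrm{s}}v$. I would therefore define a map $\Psi$ on the Banach space $L^p_{\theta}(\R^d,\C^N)$ by letting $\Psi(w)$ be the unique solution in $\D(\A_p)$ of $(\lambda I-\A_p)u=g+Q_{\mathrm{s}}w$; by construction, a fixed point of $\Psi$ is precisely a solution of $(\lambda I-\L_{\mathrm{s}})v=g$ lying in $\D^p_{\mathrm{loc}}(\L_0)$.

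For $\Psi$ to be well defined and $L^p_{\theta}$-valued, one needs Theorem \ref{thm:APrioriEstimatesInLpConstantCoefficients} to apply for \emph{every} $\lambda$ with $\Re\lambda\geqslant-(1-\varepsilon)\bzero$, using the \emph{fixed} weight function $\theta$; this is the one genuinely load-bearing verification. From \eqref{equ:etarestrict} we have $\eta^2\leqslant\varepsilon^2\azero\bzero/(\amax^2p^2)$, while $\Re\lambda\geqslant-(1-\varepsilon)\bzero$ gives $\Re\lambda+\bzero\geqslant\varepsilon\bzero>0$, so
\[
  \varepsilon\frac{\azero(\Re\lambda+\bzero)}{\amax^2p^2}\;\geqslant\;\varepsilon\frac{\azero\,\varepsilon\bzero}{\amax^2p^2}\;=\;\varepsilon^2\frac{\azero\bzero}{\amax^2p^2}\;\geqslant\;\eta^2,
\]
which is exactly the growth-rate hypothesis of Theorem \ref{thm:APrioriEstimatesInLpConstantCoefficients}; moreover $g+Q_{\mathrm{s}}w\in L^p_{\theta}$ with $\|Q_{\mathrm{s}}w\|_{L^p_{\theta}}\leqslant\|Q_{\mathrm{s}}\|_{L^\infty}\|w\|_{L^p_{\theta}}$, and $\Psi(w)\in W^{1,p}_{\theta}(\R^d,\C^N)\subseteq L^p_{\theta}(\R^d,\C^N)$.

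Next I would check that $\Psi$ is a contraction. By linearity $\Psi(w_1)-\Psi(w_2)$ solves $(\lambda I-\A_p)u=Q_{\mathrm{s}}(w_1-w_2)$, so \eqref{equ:ExpDecStatVstar} together with $\Re\lambda+\bzero\geqslant\varepsilon\bzero$ and the smallness assumption \eqref{equ:Qsmall} gives
\[
  \|\Psi(w_1)-\Psi(w_2)\|_{L^p_{\theta}}\leqslant\frac{C_{0,\varepsilon}\|Q_{\mathrm{s}}\|_{L^\infty}}{\Re\lambda+\bzero}\|w_1-w_2\|_{L^p_{\theta}}\leqslant\frac{C_{0,\varepsilon}\|Q_{\mathrm{s}}\|_{L^\infty}}{\varepsilon\bzero}\|w_1-w_2\|_{L^p_{\theta}}\leqslant\tfrac12\|w_1-w_2\|_{L^p_{\theta}}.
\]
Banach's fixed point theorem then produces a unique fixed point $v\in L^p_{\theta}(\R^d,\C^N)$, which automatically lies in $W^{1,p}_{\theta}$ since $v=\Psi(v)$, and which solves $(\lambda I-\L_{\mathrm{s}})v=g$ in $L^p(\R^d,\C^N)$ with $v\in\D^p_{\mathrm{loc}}(\L_0)$.

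It remains to derive the estimates and the uniqueness assertion. Applying \eqref{equ:ExpDecStatVstar} to $v=\Psi(v)$ with right-hand side $g+Q_{\mathrm{s}}v$ and absorbing the term $\tfrac{C_{0,\varepsilon}}{\Re\lambda+\bzero}\|Q_{\mathrm{s}}\|_{L^\infty}\|v\|_{L^p_{\theta}}\leqslant\tfrac12\|v\|_{L^p_{\theta}}$ to the left-hand side yields \eqref{equ:ExpDecStatVstarBoundedPerturbation}; inserting this bound into \eqref{equ:ExpDecStatDiVstar} and using \eqref{equ:Qsmall} once more (so that $\|Q_{\mathrm{s}}\|_{L^\infty}\|v\|_{L^p_{\theta}}\leqslant\|g\|_{L^p_{\theta}}$) yields \eqref{equ:ExpDecStatDiVstarBoundedPerturbation}. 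For uniqueness of the $\D^p_{\mathrm{loc}}(\L_0)$-solution in $L^p(\R^d,\C^N)$ I would invoke Theorem \ref{thm:LpSolvabilityUniquenessBoundedPerturbation} with $Q=Q_{\mathrm{s}}$: since \eqref{equ:Qsmall} forces $\kappa\aone\|Q_{\mathrm{s}}\|_{L^\infty}\leqslant\tfrac{\varepsilon\bzero}{2}$, the constant $\omega=-\bzero+\kappa\aone\|Q_{\mathrm{s}}\|_{L^\infty}$ satisfies $\omega\leqslant-(1-\tfrac{\varepsilon}{2})\bzero<-(1-\varepsilon)\bzero\leqslant\Re\lambda$, so that theorem applies and gives uniqueness in $\D^p_{\mathrm{loc}}(\L_0)$; our fixed point is then the solution. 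I expect the main difficulty to be purely bookkeeping — ensuring the growth-rate hypothesis of Theorem \ref{thm:APrioriEstimatesInLpConstantCoefficients} holds uniformly over the half-plane $\Re\lambda\geqslant-(1-\varepsilon)\bzero$ (which is precisely why \eqref{equ:etarestrict} has its stated form) and tracking constants so that \eqref{equ:Qsmall} produces both the contraction factor $\tfrac12$ and the clean absorption in the two final estimates.
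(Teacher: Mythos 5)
Your proposal is correct and follows essentially the same route as the paper: a Banach fixed point argument for $v\mapsto(\lambda I-\L_{\infty})^{-1}(g+Q_{\mathrm{s}}v)$ in $L^p_{\theta}$, with the growth-rate verification $\eta^2\leqslant\varepsilon^2\azero\bzero/(\amax^2p^2)\leqslant\varepsilon\azero(\Re\lambda+\bzero)/(\amax^2p^2)$ feeding Theorem \ref{thm:APrioriEstimatesInLpConstantCoefficients}, the contraction factor $\tfrac12$ coming from \eqref{equ:Qsmall}, uniqueness in $L^p$ from Theorem \ref{thm:LpSolvabilityUniquenessBoundedPerturbation}, and the gradient bound by bootstrapping. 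The only cosmetic difference is that you derive \eqref{equ:ExpDecStatVstarBoundedPerturbation} by absorption rather than via $\|v\|\leqslant(1-q)^{-1}\|F0\|$, which is equivalent.
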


\begin{proof}
Our proof proceeds in three steps.
  \begin{itemize}[leftmargin=0.43cm]\setlength{\itemsep}{0.1cm}
  \item[1.] Existence and uniqueness in $L^p(\R^d,\C^N)$ (by Theorem \ref{thm:LpSolvabilityUniquenessBoundedPerturbation}): Since $\theta$ is nondecreasing we have 
  $g\in L^p_{\theta}(\R^d,\C^N)\subseteq L^p(\R^d,\C^N)$, and due to $\Re\lambda\geqslant-(1-\varepsilon)\bzero$ and \eqref{equ:Qsmall} 
 we have
  \begin{align*}
    \Re\lambda \geqslant -(1-\varepsilon)\bzero \geqslant -\bzero + \frac{\varepsilon}{2}\bzero + \kappa\aone\left\|Q_{\mathrm{s}}\right\|_{L^{\infty}} 
     > -\bzero + \kappa\aone\left\|Q_{\mathrm{s}}\right\|_{L^{\infty}}.
  \end{align*}
  Thus, an application of Theorem \ref{thm:LpSolvabilityUniquenessBoundedPerturbation} implies that there exists a unique solution $v_1\in\D^p_{\mathrm{loc}}(\L_0)$ of 
  $(\lambda I-\L_{\mathrm{s}})v=g$ in $L^p(\R^d,\C^N)$. In order to verify that $v_1$ belongs to $W^{1,p}_{\theta}(\R^d,\C^N)$ and satisfies the inequalities 
  \eqref{equ:ExpDecStatVstarBoundedPerturbation} and \eqref{equ:ExpDecStatDiVstarBoundedPerturbation} we must analyze $(\lambda I-\L_{\mathrm{s}})v=g$ in $L^p_{\theta}(\R^d,\C^N)$.
  \item[2.] Existence in $L^p_{\theta}(\R^d,\C^N)$ (by a fixed point argument): Our aim is to show that the equation
  \begin{align} \label{equ:fixedpointlinear}
    v = \left(\lambda I-\L_{\infty}\right)^{-1}g + \left(\lambda I-\L_{\infty}\right)^{-1}Q_{\mathrm{s}}v =: Fv
  \end{align}
  in $L^p_{\theta}(\R^d,\C^N)$ has a unique fixed point $v_2\in L^p_{\theta}(\R^d,\C^N)$ which even belongs to $\D^p_{\mathrm{loc}}(\L_0)$
and agrees with $v_1$. 
  For this purpose, consider  in $L^p(\R^d,\C^N)$ the equation
  \begin{align}
    \label{equ:uEquation}
    \left(\lambda I-\L_{\infty}\right)u = g+Q_{\mathrm{s}}v,
\quad \text{given} \quad v\in L^p_{\theta}(\R^d,\C^N).
  \end{align}
   First note, that the assumptions of Theorem \ref{thm:LpMaximalDomainPart1} are satisfied. This allows us 
  to write $\L_{\infty}$ and $\D^p_{\mathrm{loc}}(\L_0)$ instead of $\A_p$ and $\D(\A_p)$ in Theorem \ref{thm:APrioriEstimatesInLpConstantCoefficients}. 
  Further, $\Re\lambda\geqslant -(1-\varepsilon)\bzero$ 
and equation \eqref{equ:etarestrict} imply
  \begin{align*}
    0 \leqslant \eta^2 \leqslant \varepsilon^2 \frac{\azero\bzero}{\amax^2 p^2} 
     \leqslant \varepsilon\frac{\azero(\Re\lambda+\bzero)}{\amax^2 p^2}.
  \end{align*}
  Then Theorem \ref{thm:APrioriEstimatesInLpConstantCoefficients} yields a unique solution $u\in\D^p_{\mathrm{loc}}(\L_0)$ of \eqref{equ:uEquation} 
  which satisfies $u\in L^p_{\theta}(\R^d,\C^N)$. This shows that $F$ maps $L^p_{\theta}(\R^d,\C^N)$ into itself and satisfies $Fv\in\D^p_{\mathrm{loc}}(\L_0)$ 
  for every $v\in L^p_{\theta}(\R^d,\C^N)$. Applying $\left(\lambda I-\L_{\infty}\right)^{-1}$ to both sides in \eqref{equ:uEquation} shows $u=Fv$ with $F$ 
  defined in \eqref{equ:fixedpointlinear}. 
  Moreover, $Fv\in\D^p_{\mathrm{loc}}(\L_0)\cap L^p_{\theta}(\R^d,\C^N)$. The linear part of $F$ 
  is a contraction due to \eqref{equ:ExpDecStatVstar} and \eqref{equ:Qsmall}
  \begin{align*}
    \left\|\left(\lambda I-\L_{\infty}\right)^{-1}Q_{\mathrm{s}}v\right\|_{L^p_{\theta}}\leqslant q\left\|v\right\|_{L^p_{\theta}}\quad\forall\,v\in L^p_{\theta}(\R^d,\C^N)
  \end{align*}
  with Lipschitz constant
  \begin{align} \label{equ:Lipschitzq}
    0\leqslant q:=\frac{C_{0,\varepsilon}}{\Re\lambda+\bzero}\left\|Q_{\mathrm{s}}\right\|_{L^{\infty}}
    \leqslant \frac{C_{0,\varepsilon}}{\varepsilon\bzero}\left\|Q_{\mathrm{s}}\right\|_{L^{\infty}} 
    \leqslant \frac{1}{2} <1.
  \end{align}
    Consequently, $F$ is a contraction in $L^p_{\theta}(\R^d,\C^N)$.
  Thus, $F$ has a unique fixed point $v_2\in L^p_{\theta}(\R^d,\C^N)$ satisfying 
$v_2=Fv_2\in\D^p_{\mathrm{loc}}(\L_0)$.
 Since $L^p_{\theta}(\R^d,\C^N)\subseteq L^p(\R^d,\C^N)$, the equality $Fv_2=v_2$ holds in $L^p(\R^d,\C^N)$ as well, 
  and applying $\left(\lambda I-\L_{\infty}\right)$ to both sides yields $(\lambda I-\L_{\mathrm{s}})v_2=g$ in $L^p(\R^d,\C^N)$. 
By the unique solvability of this equation we conclude $v:=v_1=v_2\in L^p_{\theta}(\R^d,\C^N)$.
  \item[3.] $L^p_{\theta}$- and $W^{1,p}_{\theta}$-estimates (by contraction mapping principle and bootstrapping): The $L^p_{\theta}$-estimate follows from the contraction mapping 
  principle and the estimates \eqref{equ:ExpDecStatVstar}, \eqref{equ:Lipschitzq}  
  \begin{align*}
     \left\| v \right\|_{L^p_{\theta}} 
    \leqslant \frac{1}{1-q}\left\|F0\right\|
&
\leqslant\frac{2 C_{0,\varepsilon}}{\Re\lambda+\bzero}\left\|g\right\|_{L^p_{\theta}}.
  \end{align*}
  Finally, the $W^{1,p}_{\theta}$-estimate is proved by bootstrapping using the $L^p_{\theta}$-estimate \eqref{equ:ExpDecStatVstarBoundedPerturbation},
the smallness condition  
\eqref{equ:Qsmall} and 
  \eqref{equ:ExpDecStatDiVstar} for every $i=1,\ldots,d$
  \begin{align*}
               \left\|D_i v\right\|_{L^p_{\theta}}
    \leqslant&   
     \frac{C_{1,\varepsilon}}{\left(\Re\lambda+\bzero\right)^{\frac{1}{2}}}\left(\left\|g\right\|_{L^p_{\theta}} + \left\|Q_{\mathrm{s}}\right\|_{L^{\infty}} \left\|v\right\|_{L^p_{\theta}}\right) \\
    \leqslant& \frac{C_{1,\varepsilon}}{\left(\Re\lambda+\bzero\right)^{\frac{1}{2}}}\left(1 + 2 q \right)\left\|g\right\|_{L^p_{\theta}} 
    \leqslant \frac{2 
C_{1,\varepsilon}}{\left(\Re\lambda+\bzero\right)^{\frac{1}{2}}}\left\|g\right\|_{L^p_{\theta}}.
  \end{align*}
  \end{itemize}
\end{proof}

\subsection{Exponentially weighted resolvent estimates for variable coefficient operators}
\label{subsec:3.3}

Consider the differential operator
\begin{align*}
  \left[\L_{B}v\right](x) := A\triangle v(x) +\left\langle Sx,\nabla v(x)\right\rangle - B(x)v(x),\,x\in\R^d.
\end{align*}
The following Lemma \ref{lem:LemmaForUniquenessInDpmax} is crucial to derive 
energy estimates for $\L_{B}$ in exponentially weighted $L^p$-spaces, see Theorem 
\ref{thm:WeightedResolventEstimates} below. The result is proved in \cite[Lemma 4.2]{Otten2015a}, \cite[Lemma 5.12]{Otten2014}, it is a vector-valued and 
complex-valued version of 
\cite[Lemma 2.1]{MetafunePallaraVespri2005}.

\begin{lemma}\label{lem:LemmaForUniquenessInDpmax}
  Let the assumption \eqref{cond:A3} be satisfied for $\K=\C$. Moreover, let $\Omega\subset\R^d$ be a bounded domain with a $C^2$-boundary 
  or $\Omega=\R^d$, $1<p<\infty$, $v\in W^{2,p}(\Omega,\C^N)\cap W^{1,p}_0(\Omega,\C^N)$ and $\eta\in C^1_b(\Omega,\R)$ be nonnegative, then
  \begin{equation*}
    \begin{aligned}
              -\Re\int_{\Omega}\eta\overline{v}^T|v|^{p-2}A\triangle v 
    \geqslant & \Re\int_{\Omega}\eta|v|^{p-2}\sum_{j=1}^{d}\overline{D_j v}^T A D_j v\one_{\{v\neq 0\}}
               +\Re\int_{\Omega}\overline{v}^T\left|v\right|^{p-2}\sum_{j=1}^{d}D_j\eta A D_jv \\ 
             & +(p-2)\Re\int_{\Omega}\eta|v|^{p-4}\sum_{j=1}^{d}\Re\left(\overline{D_j v}^Tv\right)\overline{v}^T A D_j v\one_{\{v\neq 0\}}.
    \end{aligned}
  \end{equation*}
\end{lemma}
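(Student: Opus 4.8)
The plan is to establish an exact identity for a regularized version of the left-hand side and then pass to the limit $\epsilon\downarrow0$. For $\epsilon>0$ set $g_\epsilon:=(|v|^2+\epsilon)^{\frac{p-2}{2}}$; this smooth, strictly positive weight plays the role of $|v|^{p-2}$ but removes its singularity on the zero set $\{v=0\}$. All integrals appearing below are finite, since $v\in W^{2,p}(\Omega,\C^N)$ gives $A\triangle v\in L^p$ and $\bar v^T g_\epsilon,\ \overline{D_j v}^T g_\epsilon\in L^{p'}$ (by H\"older's inequality, treating the cases $1<p<2$ and $p\geqslant2$ separately), while $\eta,\nabla\eta$ are bounded.

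First I would integrate by parts in each variable $x_j$. Writing $A\triangle v=\sum_j A D_j^2 v$, and noting that $\eta\,\bar v^T g_\epsilon$ has vanishing trace on $\partial\Omega$ (because $v\in W^{1,p}_0(\Omega,\C^N)$ has zero trace; if $\Omega=\R^d$ there is no boundary), the boundary terms drop out. Using
\begin{equation*}
 D_j\big(\eta\,\bar v^T g_\epsilon\big)=(D_j\eta)\,\bar v^T g_\epsilon+\eta\,\overline{D_j v}^T g_\epsilon
   +(p-2)\,\eta\,(|v|^2+\epsilon)^{\frac{p-4}{2}}\,\Re\big(\overline{D_j v}^T v\big)\,\bar v^T
\end{equation*}
together with $D_j(|v|^2)=2\Re(\overline{D_j v}^T v)$, and summing over $j$, one obtains the exact identity
\begin{align*}
  -\Re\int_\Omega \eta\,\bar v^T g_\epsilon\, A\triangle v
  =\ &\Re\int_\Omega \eta\, g_\epsilon \sum_{j=1}^d \overline{D_j v}^T A D_j v
    +\Re\int_\Omega \bar v^T g_\epsilon \sum_{j=1}^d D_j\eta\, A D_j v \\
   &+(p-2)\Re\int_\Omega \eta\,(|v|^2+\epsilon)^{\frac{p-4}{2}}\sum_{j=1}^d \Re\big(\overline{D_j v}^T v\big)\,\bar v^T A D_j v .
\end{align*}

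Then I would let $\epsilon\downarrow0$. On the left-hand side and on the second and third terms on the right, dominated convergence applies: the pointwise limits are $g_\epsilon|v|\to|v|^{p-1}$ and $(|v|^2+\epsilon)^{\frac{p-4}{2}}|v|^2\to|v|^{p-2}\,\one_{\{v\neq0\}}$, and the integrands are dominated by fixed $L^1$-functions built from $|v|^{p-1}\in L^{p'}$, $D_j v\in L^p$, $\|A\|$, $\|\eta\|_\infty$, $\|D_j\eta\|_\infty$, using $(|v|^2+\epsilon)^{\frac{p-4}{2}}|v|^2\leqslant g_\epsilon\leqslant\max\{|v|^{p-2},(|v|^2+1)^{\frac{p-2}{2}}\}$ for $\epsilon\leqslant1$. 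For the first term on the right I would instead invoke Fatou's lemma: by the strict accretivity \eqref{cond:A3}, $\Re\sum_{j}\overline{D_j v}^T A D_j v\geqslant\beta_A\sum_j|D_j v|^2\geqslant0$, so the integrand $\eta\,g_\epsilon\,\Re\sum_j\overline{D_j v}^T A D_j v$ is nonnegative and converges a.e.\ to $\eta\,|v|^{p-2}\Re\sum_j\overline{D_j v}^T A D_j v\,\one_{\{v\neq0\}}$ (note $D_j v=0$ a.e.\ on $\{v=0\}$ since $v\in W^{2,p}$). Fatou then yields the lower bound for this term — which is precisely where the identity turns into the asserted inequality — and collecting the three limits gives the claim.

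The step I expect to be the main obstacle is the limit passage for $1<p<2$, where $(|v|^2+\epsilon)^{\frac{p-4}{2}}$ blows up as $\epsilon\downarrow0$ and $|v|^{p-2}$ fails to be locally Lipschitz at the zeros of $v$. One controls the third term by $\big|\Re(\overline{D_j v}^T v)\,\bar v^T A D_j v\big|\leqslant\|A\|\,|v|^2|D_j v|^2$ and $(|v|^2+\epsilon)^{\frac{p-4}{2}}|v|^2\leqslant|v|^{p-2}$, reducing matters to the integrability of $|v|^{p-2}|D_j v|^2\,\one_{\{v\neq0\}}$ — the genuinely delicate estimate, and the technical heart of \cite{MetafunePallaraVespri2005,Otten2014,Otten2015a}. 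The hypotheses are used sparingly: $v\in W^{1,p}_0$ serves only to annihilate the boundary terms, and strict accretivity \eqref{cond:A3} is needed only to make the gradient term nonnegative so that Fatou applies.
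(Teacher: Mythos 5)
Your overall strategy --- regularize $|v|^{p-2}$ by $g_\epsilon=(|v|^2+\epsilon)^{\frac{p-2}{2}}$, integrate by parts, and pass to the limit via dominated convergence and Fatou --- is exactly the Metafune--Pallara--Vespri-type argument carried out in the sources the paper cites for this lemma (\cite[Lemma 4.2]{Otten2015a}, \cite[Lemma 5.12]{Otten2014}); the paper itself does not reproduce the proof. Your product-rule computation and the resulting regularized identity are correct, and invoking \eqref{cond:A3} to make the gradient term nonnegative for Fatou is the right idea.

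There is, however, a genuine gap at exactly the step you defer. For $1<p<2$, dominated convergence on the third term requires $\eta|v|^{p-2}\sum_j|D_jv|^2\one_{\{v\neq0\}}\in L^1(\Omega)$, and Fatou on the first term yields a \emph{finite} limit only if $\int_\Omega\eta\,g_\epsilon\,\Re\sum_j\overline{D_jv}^TAD_jv$ is bounded uniformly in $\epsilon$. Extracting that uniform bound from the identity forces you to control the third term, and the only bound available is $|p-2|\,|A|\int\eta\,g_\epsilon\sum_j|D_jv|^2\leqslant\frac{|p-2|\,|A|}{\beta_A}\int\eta\,g_\epsilon\,\Re\sum_j\overline{D_jv}^TAD_jv$; the absorption closes only when $\beta_A>|p-2|\,|A|$, which is essentially the dissipativity condition \eqref{cond:A4DC}, not the assumed \eqref{cond:A3}. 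So your Fatou/DCT combination is circular under the stated hypotheses. The cited proofs break the circle by first establishing $\int_\Omega\eta|v|^{p-2}|\nabla v|^2\one_{\{v\neq0\}}<\infty$ independently --- in effect the scalar case $A=I$, where the first and third terms combine into the manifestly nonnegative quantity $(p-1)(|u|^2+\epsilon)^{\frac{p-2}{2}}|\nabla u|^2$ so that Fatou applies cleanly, and the vector bound follows componentwise since $|v|^{p-2}\leqslant|v_k|^{p-2}$ for $p<2$ --- and only then run your limit argument on the full identity. Two further repairs are needed: for $p>2$ and $\Omega=\R^d$ your dominating function $(|v|^2+1)^{\frac{p-2}{2}}$ produces terms of the form $|v|\,|A\triangle v|$ and $|v|\,|D_jv|$, which lie only in $L^{p/2}$, not $L^1$ (for $p\geqslant2$ one should instead integrate by parts directly, since $|v|^{p-2}\bar v$ admits an a.e.\ chain rule without regularization); and for $1<p<2$ on $\R^d$ the individual regularized integrals such as $\int\eta\,g_\epsilon\sum_j|D_jv|^2$ need not be finite for fixed $\epsilon$ (on $\{|v|\leqslant1\}$ one has $g_\epsilon\geqslant(1+\epsilon)^{\frac{p-2}{2}}>0$ while $\nabla v\in L^p\not\Rightarrow\nabla v\in L^2$), so splitting the identity into separately finite pieces requires additional justification, e.g.\ spatial cutoffs.
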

Some care has to be taken when using this estimate. By a slight abuse of notation,  the term $|v|^q \one_{\{v\neq0\}}$ in the integrands should be read for powers $q<0$ as follows
\begin{equation*}
\left[|v|^{q} \one_{\{v \neq 0\}}\right](x) =\begin{cases} |v(x)|^{q} , & |v(x)|>0, \\
                                              0 , & v(x) = 0.
                              \end{cases}
\end{equation*}
The proof of Lemma \ref{lem:LemmaForUniquenessInDpmax} shows by using Lebesgue's dominated convergence and Fatou's lemma that the integrals 
involving $ \one_{\{v\neq 0\}}$ exist for $1<p<\infty$, which is nontrivial in case $1<p<2$.

In the following theorem we prove resolvent estimates for $\L_{B}$ in exponentially weighted $L^p$-spaces. The theorem extends  \cite[Theorem 5.13]{Otten2014} 
to variable coefficient perturbations of $\L_0$ and to weighted $L^p$-spaces. Later on, in Theorem \ref{thm:APrioriEstimatesInLpRelativelyCompactPerturbation} we apply 
Theorem \ref{thm:WeightedResolventEstimates} to
 $B(x)=B_{\infty}-Q_{\mathrm{s}}(x)$, so that $\L_B$ agrees with $\L_s$ from
\eqref{equ:operators}.

\begin{theorem}[Resolvent estimates in weighted $L^p$-spaces]\label{thm:WeightedResolventEstimates}
  Assume \eqref{cond:A4DC} for $\K=\C$, $A\in\C^{N,N}$, let $1<p<\infty$
and assume \eqref{cond:A5} for $S\in\R^{d,d}$. Let $B\in L^{\infty}(\R^d,\C^{N,N})$ satisfy
  the strict accretivity condition
  \begin{align}
    \label{equ:StrictAccretivityForB}
    \Re\left\langle w,B(x)w\right\rangle\geqslant c_B|w|^2\;\forall\,x\in\R^d\;\forall\,w\in\C^N,\,\text{for some $c_B\in\R$.}
  \end{align}
  Moreover, let $\lambda\in\C$ with $\Re\lambda+c_B>0$ be given, and  let 
$\theta_1,\theta_2 \in C(\R^d,\R)$
 be positive  weight functions satisfying
  \begin{align}
    \label{equ:ConditionTheta1Version2}
    \theta_1(x) = \exp\left(-\mu_1\sqrt{|x|^2+1}\right)\quad\text{with}\quad
 0\leqslant\left|\mu_1\right|\leqslant \sqrt{\frac{(\Re\lambda+c_B)\gamma_A}{d|A|^2}},
  \end{align}
    \begin{align}
    \label{equ:RelationTheta1Theta2Version2}
    \theta_1(x) \leqslant C\theta_2(x)\;\forall\,x\in\R^d\text{ for some $C>0$,}
  \end{align}
  Finally, let $g\in L^p_{\theta_2}(\R^d,\C^N)$ and let $v\in W^{2,p}_{\mathrm{loc}}(\R^d,\C^N)\cap L^p_{\theta_1}(\R^d,\C^N)$ be a solution of
  \begin{align} \label{equ:resolveLB}
    \left(\lambda I-\L_B\right)v=g\quad\text{in $L^p_{\mathrm{loc}}(\R^d,\C^N)$.}
  \end{align}
  Then, $v$ is the unique solution of \eqref{equ:resolveLB}  in $W^{2,p}_{\mathrm{loc}}(\R^d,\C^N)\cap L^p_{\theta_1}(\R^d,\C^N)$ and satisfies the estimate
  \begin{equation} \label{equ:resolvetheta12}
    \left\|v\right\|_{L^p_{\theta_1}}\leqslant\frac{2C^{\frac{1}{p}}}{\Re\lambda+c_B}\left\|g\right\|_{L^p_{\theta_2}}.
  \end{equation}
  In addition, for $1<p\leqslant 2$ the following gradient estimate holds
  \begin{equation} \label{equ:gradientest}
    \left\|D_iv\right\|_{L^p_{\theta_1}}\leqslant\frac{2C^{\frac{1}{p}}\gamma_A^{-\frac{1}{2}}}{(\Re\lambda+c_B)^{\frac{1}{2}}}\left\|g\right\|_{L^p_{\theta_2}},
\quad i=1,\ldots,d,
  \end{equation}
  with $C$ from \eqref{equ:RelationTheta1Theta2Version2}, $\gamma_A$ from \eqref{cond:A4DC} and $c_B$ from \eqref{equ:StrictAccretivityForB}.
\end{theorem}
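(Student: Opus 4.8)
The plan is to run an $L^p$-energy (multiplier) argument directly on the resolvent equation \eqref{equ:resolveLB}, using Lemma~\ref{lem:LemmaForUniquenessInDpmax} to handle the diffusion term, the skew-symmetry of $S$ to kill the drift term, and the $L^p$-dissipativity condition \eqref{cond:A4DC} together with Young's inequality to absorb the contributions of the weight. Concretely, I fix a radial cutoff $\psi_n\in C_c^\infty(\R^d)$ with $\psi_n\equiv 1$ on $B_n$, $\supp\psi_n\subset B_{2n}$, $0\le\psi_n\le 1$ and $|\nabla\psi_n|^2\psi_n^{p-2}\le C n^{-2}$, put $\eta:=\theta_1^p\psi_n^p\in C^1_b(\R^d,\R)$, multiply \eqref{equ:resolveLB} by $\eta\,\overline{v}^{\,T}|v|^{p-2}$, integrate over $\R^d$ (all integrands are supported in $B_{2n}$) and take real parts. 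Since $v$ is only in $W^{2,p}_{\mathrm{loc}}$, I first multiply $v$ by a further cutoff equal to $1$ on a neighbourhood of $\supp\psi_n$ to land in $W^{2,p}\cap W^{1,p}_0$ of a large ball and make Lemma~\ref{lem:LemmaForUniquenessInDpmax} applicable; on $\supp\eta$ this cutoff is invisible.

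The three resulting terms are treated as follows. For the drift term, the identity $\Re\bigl(\overline{v}^{\,T}|v|^{p-2}\langle Sx,\nabla v\rangle\bigr)=\tfrac1p\langle Sx,\nabla|v|^p\rangle$, integration by parts, $\trace S=0$, and the fact that $\langle Sx,\nabla\theta_1\rangle=\langle Sx,\nabla\psi_n\rangle=0$ (radial weights have gradient parallel to $x$ and $\langle Sx,x\rangle=0$) show this term vanishes identically. The zeroth order term obeys $\Re\int\eta|v|^{p-2}\overline{v}^{\,T}B(x)v\ge c_B\int\eta|v|^p$ by \eqref{equ:StrictAccretivityForB}. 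For the diffusion term I apply Lemma~\ref{lem:LemmaForUniquenessInDpmax}: its first and third integrals combine, via the pointwise inequality of \eqref{cond:A4DC} with $z=v$, $w=D_jv$, to a term $\ge\gamma_A\int\eta|v|^{p-2}|\nabla v|^2\one_{\{v\neq 0\}}$, while its middle integral splits into a genuine weight part containing $\nabla\theta_1$ and a commutator part containing $\nabla\psi_n$. Using $|\nabla\theta_1|\le|\mu_1|\theta_1$, the weight part is dominated by a constant multiple of $|\mu_1|\,|A|\int\eta|v|^{p-1}|\nabla v|$, and Young's inequality, with the parameter tuned so that the bound $|\mu_1|\le\sqrt{(\Re\lambda+c_B)\gamma_A/(d|A|^2)}$ from \eqref{equ:ConditionTheta1Version2} makes it work, splits this into fractions of $\gamma_A\int\eta|v|^{p-2}|\nabla v|^2$ and $(\Re\lambda+c_B)\int\eta|v|^p$ that can be absorbed on the left. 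The commutator part is absorbed in the same way, leaving an error term $\le C n^{-2}\int_{B_{2n}\setminus B_n}\theta_1^p|v|^p$, which tends to $0$ by dominated convergence because $v\in L^p_{\theta_1}$.

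Collecting everything yields an inequality of the form
\[
\tfrac{\Re\lambda+c_B}{2}\,\|\psi_n\theta_1 v\|_{L^p}^{p}+\tfrac{\gamma_A}{2}\int_{\R^d}\eta\,|v|^{p-2}|\nabla v|^2\one_{\{v\neq 0\}}\le\|\psi_n\theta_1 v\|_{L^p}^{p-1}\,\|\psi_n\theta_1 g\|_{L^p}+o(1),
\]
where the right-hand side comes from Hölder's inequality (exponents $p/(p-1)$ and $p$) applied to $\int\eta|v|^{p-1}|g|$; bounding $\|\psi_n\theta_1 g\|_{L^p}\le\|\theta_1 g\|_{L^p}$ and invoking \eqref{equ:RelationTheta1Theta2Version2}, dividing by $\|\psi_n\theta_1 v\|_{L^p}^{p-1}$, and letting $n\to\infty$ (monotone convergence, recalling $v\in L^p_{\theta_1}$) gives \eqref{equ:resolvetheta12}. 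For $1<p\le 2$ the gradient estimate \eqref{equ:gradientest} then follows from the retained integral $\int\theta_1^p|v|^{p-2}|\nabla v|^2\one_{\{v\neq 0\}}$, the pointwise factorization $|D_iv|^p=(|v|^{p-2}|D_iv|^2)^{p/2}(|v|^p)^{(2-p)/2}$, and Hölder with exponents $2/p$ and $2/(2-p)$, combined with \eqref{equ:resolvetheta12}. Uniqueness is immediate: two solutions in $W^{2,p}_{\mathrm{loc}}\cap L^p_{\theta_1}$ have a difference $w$ solving $(\lambda I-\L_B)w=0$, and \eqref{equ:resolvetheta12} with $g=0$ forces $w=0$.

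I expect the main obstacle to lie in the cutoff bookkeeping and the passage $n\to\infty$: one must choose $\psi_n$ so that the commutator error $\int\theta_1^p\psi_n^{p-2}|\nabla\psi_n|^2|v|^p$ is genuinely controlled by $\|v\|_{L^p_{\theta_1}}$ and vanishes (the factor $\psi_n^{p-2}$ is delicate when $1<p<2$), and one must justify that the singular integrands $|v|^{p-2}|\nabla v|^2\one_{\{v\neq 0\}}$ are finite and that every integration by parts is legitimate for $W^{2,p}_{\mathrm{loc}}$-functions against the nonsmooth weight $\theta_1$ — exactly the care flagged after Lemma~\ref{lem:LemmaForUniquenessInDpmax}. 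Everything else is bounded estimates and Young's inequality.
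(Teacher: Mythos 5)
Your proposal is correct and follows essentially the same route as the paper: the same weighted energy/multiplier argument with a cutoff, Lemma \ref{lem:LemmaForUniquenessInDpmax} for the diffusion term, skew-symmetry and radiality to kill the drift contributions, the $L^p$-dissipativity condition \eqref{cond:A4DC} plus Young's inequality tuned to the bound on $|\mu_1|$ to absorb the weight-gradient term, H\"older for the inhomogeneity and for the gradient estimate when $1<p\leqslant 2$, and uniqueness from the a priori bound. The only differences are cosmetic (powers of the cutoff and of $\theta_1$ in the multiplier, and whether the drift commutator with the cutoff vanishes by radiality or is estimated over an annulus as in the paper).
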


\begin{proof}
 Consider $v\in W^{2,p}_{\mathrm{loc}}(\R^d,\C^N)\cap L^p_{\theta_1}(\R^d,\C^N)$ 
which satisfies 
    \eqref{equ:resolveLB}
  for some $g\in L^p_{\theta_2}(\R^d,\C^N)$. For $n\in\R$ with $n>0$ let us define the 
cut-off functions
  \begin{align} \label{equ:definechi}
    \chi_n(x)=\chi_1\left(\frac{x}{n}\right),\quad 
\chi_1\in C_{\mathrm{c}}^{\infty}(\R^d,\R),\quad \chi_1(x)=\begin{cases}
                                                                                                          1                          &,\,|x|\leqslant 1 \\
                                                                                                          \in[0,1],\,\textrm{smooth} &,\,1<|x|<2 \\
                                                                                                          0                          &,\,|x|\geqslant 2
                                                                                                         \end{cases}.
  \end{align}
  \begin{itemize}[leftmargin=0.43cm]\setlength{\itemsep}{0.1cm}
  \item[1.] We multiply \eqref{equ:resolveLB} from left by 
$\chi_n^2\theta_1\overline{v}^T\left|v\right|^{p-2}$, $n\in\N$, 
  integrate over $\R^d$ and take real parts,
  \begin{equation} \label{equ:startenergy}
\begin{aligned}
       \Re\int_{\R^d}\chi_n^2 \theta_1 \left|v\right|^{p-2}\overline{v}^T g
    =& (\Re\lambda)\int_{\R^d}\chi_n^2\theta_1\left|v\right|^p
       - \Re\int_{\R^d}\chi_n^2\theta_1\overline{v}^T\left|v\right|^{p-2} A\triangle v \\
    - &  \Re\int_{\R^d}\chi_n^2\theta_1\overline{v}^T\left|v\right|^{p-2} \sum_{j=1}^{d}(Sx)_j D_j v 
      + \Re\int_{\R^d}\chi_n^2\theta_1\overline{v}^T\left|v\right|^{p-2} B v.
  \end{aligned}
\end{equation}

  \item[2.] Let us rewrite the third term on the right-hand side by using the formula
  \begin{align*}
        D_j\left(\left|v\right|^p\right)= 
        p |v|^{p-2} \Re\left(\overline{D_j v}^T v\right)
  \end{align*}
and the following identity obtained from \eqref{cond:A5} and integration by parts,
  \begin{align*}
    0 =& \frac{1}{p}\int_{\R^d}\chi_n^2\theta_1\big(\sum_{j=1}^{d}S_{jj}\big)\left|v
\right|^p
           =  \frac{1}{p}\sum_{j=1}^{d}\int_{\R^d}\chi_n^2 D_j\left((Sx)_j\right)\theta_1\left|v\right|^p \\
      =& -\frac{2}{p}\sum_{j=1}^{d}\int_{\R^d}\chi_n (D_j \chi_n) (Sx)_j \theta_1 \left|v\right|^p
         -\sum_{j=1}^{d}\int_{\R^d}\chi_n^2 \theta_1 (Sx)_j \Re\left(\overline{D_j v}^T v\right)\left|v\right|^{p-2} \\
       & -\frac{1}{p}\sum_{j=1}^{d}\int_{\R^d}\chi_n^2 (Sx)_j (D_j\theta_1) \left|v\right|^p \\
      =& -\frac{2}{p}\int_{\R^d}\chi_n \theta_1 \left|v\right|^p \sum_{j=1}^{d}(D_j \chi_n)(Sx)_j 
         -\Re\int_{\R^d}\chi_n^2 \theta_1 \overline{v}^T\left|v\right|^{p-2}\sum_{j=1}^{d}(Sx)_j D_j v \\
       & -\frac{1}{p}\int_{\R^d}\chi_n^2 \left|v\right|^p\sum_{j=1}^{d}(Sx)_j (D_j\theta_1) .
  \end{align*}
We insert this into \eqref{equ:startenergy} and apply
  Lemma \ref{lem:LemmaForUniquenessInDpmax} to the second term with $\Omega=B_{2n}(0)$, $\eta=\chi_n^2 \theta_1$
  \begin{align*}
             &   \Re\int_{\R^d}\chi_n^2 \theta_1 \left|v\right|^{p-2} \overline{v}^T g 
            =   \left(\Re\lambda\right)\int_{\R^d}\chi_n^2 \theta_1 \left|v\right|^p 
               - \Re\int_{\R^d}\chi_n^2 \theta_1 \overline{v}^T \left|v\right|^{p-2} A\triangle v \\
             & + \frac{2}{p}\int_{\R^d}\chi_n \theta_1 \left|v\right|^p \sum_{j=1}^{d}(D_j \chi_n) (Sx)_j
               + \frac{1}{p}\int_{\R^d}\chi_n^2\left|v\right|^p \sum_{j=1}^{d}(D_j \theta_1) (Sx)_j \\
             & + \Re\int_{\R^d}\chi_n^2\theta_1\overline{v}^T\left|v\right|^{p-2} B v \\
    \geqslant&   (\Re\lambda)\int_{\R^d}\chi_n^2\theta_1\left|v\right|^p
               + \frac{2}{p}\int_{\R^d}\chi_n\theta_1\left|v\right|^p \sum_{j=1}^{d}(D_j \chi_n)(Sx)_j \\
             & + \frac{1}{p}\int_{\R^d}\chi_n^2\left|v\right|^p \sum_{j=1}^{d}(D_j \theta_1) (Sx)_j  
               + \Re\int_{\R^d}2\chi_n \theta_1\overline{v}^T\left|v\right|^{p-2}\sum_{j=1}^{d}D_j \chi_n A D_j v \\
             & + \Re\int_{\R^d}\chi_n^2\overline{v}^T\left|v\right|^{p-2}\sum_{j=1}^{d}(D_j \theta_1) A D_j v
               + \Re\int_{\R^d}\chi_n^2\theta_1\left|v\right|^{p-2}\sum_{j=1}^{d}\overline{D_j v}^T A D_j v \one_{\{v\neq 0\}} \\
             & + (p-2)\Re\int_{\R^d}\chi_n^2\theta_1\left|v\right|^{p-4}\sum_{j=1}^{d}\Re\left(\overline{D_j v}^T v\right)\overline{v}^T A D_j v \one_{\{v\neq 0\}}
               + \Re\int_{\R^d}\chi_n^2\theta_1\overline{v}^T\left|v\right|^{p-2} B v.
  \end{align*}
  \item[3.] 
Subtracting the $2$nd, $3$rd, $4$th and $5$th term of the right hand side, yields the upper bound 
  \begin{align*}
             & (\Re\lambda)\int_{\R^d}\chi_n^2\theta_1\left|v\right|^p 
               + \Re\int_{\R^d}\chi_n^2\theta_1\left|v\right|^{p-2}\sum_{j=1}^{d}\overline{D_j v}^T A D_j v \one_{\{v\neq 0\}} \\
             & + (p-2)\Re\int_{\R^d}\chi_n^2\theta_1\left|v\right|^{p-4}\sum_{j=1}^{d}\Re\left(\overline{D_j v}^T v\right)\overline{v}^T
                 A D_j v \one_{\{v\neq 0\}}
              + \Re\int_{\R^d}\chi_n^2\theta_1\overline{v}^T\left|v\right|^{p-2} B v \\
    \leqslant& \Re\int_{\R^d}\chi_n^2\theta_1 \left|v\right|^{p-2} \overline{v}^T g 
               - \Re\int_{\R^d}2\chi_n\theta_1 \overline{v}^T\left|v\right|^{p-2}\sum_{j=1}^{d}D_j \chi_n A D_j v \\
             & - \frac{2}{p} \int_{\R^d}\chi_n\theta_1\left|v\right|^p \sum_{j=1}^{d}(D_j \chi_n)(Sx)_j 
               - \frac{1}{p} \int_{\R^d}\chi_n^2\left|v\right|^p\sum_{j=1}^{d}(D_j\theta_1)(Sx)_j \\
             & - \Re\int_{\R^d}\chi_n^2\overline{v}^T\left|v\right|^{p-2}\sum_{j=1}^{d}(D_j\theta_1) A D_j v =: T_1 + T_2 + T_3 +T_4+ T_5.
  \end{align*}
We estimate the terms successively. Using $\Re z\leqslant |z|$ and \eqref{equ:RelationTheta1Theta2Version2}, H{\"o}lder's inequality yields  
  \begin{align*}
            T_1= & 
              \int_{\R^d}\chi_n^2 \theta_1 \left|v\right|^{p-2}\Re\left(\overline{v}^T g\right) \leqslant
     \int_{\R^d}\chi_n^2 \theta_1 \left|v\right|^{p-1}\left|g\right|\\
    \leqslant & \left(\int_{\R^d}\left(\chi_n^{\frac{2(p-1)}{p}}\theta_1^{\frac{p-1}{p}}\left|v\right|^{p-1}\right)^{\frac{p}{p-1}}\right)^{\frac{p-1}{p}} 
               \left(\int_{\R^d}\left(\chi_n^{\frac{2}{p}}\theta_1^{\frac{1}{p}}\left|g\right|\right)^p\right)^{\frac{1}{p}} \\
    \leqslant& C^{\frac{1}{p}}\left(\int_{\R^d}\chi_n^{2}\theta_1\left|v\right|^p\right)^{\frac{p-1}{p}} \left(\int_{\R^d}\chi_n^2\theta_2\left|g\right|^p\right)^{\frac{1}{p}}.
  \end{align*}
  For the $2$nd term we use  H{\"o}lder's inequality with $p=q=2$ and Young's inequality with $\delta>0$
  \begin{align*}
          T_2 \leqslant& 2|A|\int_{\R^d}\chi_n\theta_1\left|v\right|^{p-1}\sum_{j=1}^{d}\left|D_j \chi_n\right| \left|D_j v\right|
    \leqslant  \frac{2|A| \left\|\chi_1\right\|_{1,\infty}}{n}\sum_{j=1}^{d}\int_{\R^d}\chi_n\theta_1\left|D_j v\right| \left|v\right|^{p-1} \\
    \leqslant& \frac{2|A| \left\|\chi_1\right\|_{1,\infty}}{n}\sum_{j=1}^{d}\left(\int_{\R^d}\chi_n^2\theta_1\left|D_j v\right|^2 \left|v\right|^{p-2}\one_{\{v\neq 0\}}\right)^{\frac{1}{2}}
               \left(\int_{\R^d}\theta_1\left|v\right|^p\right)^{\frac{1}{2}} \\
    \leqslant& \frac{2|A| \left\|\chi_1\right\|_{1,\infty}\delta}{n}\sum_{j=1}^{d}\int_{\R^d}\chi_n^2\theta_1\left|D_j v\right|^2 \left|v\right|^{p-2}\one_{\{v\neq 0\}}
               +\frac{2d|A| \left\|\chi_1\right\|_{1,\infty}}{4n\delta}\int_{\R^d}\theta_1\left|v\right|^p.
  \end{align*}
  Here we used that for every $x\in\R^d$ and $j=1,\ldots,d$
  \begin{align*}
              \left|D_j \chi_n(x)\right| 
            = \left|D_j\left(\chi_1\left(\frac{x}{n}\right)\right)\right|
               \leqslant \frac{1}{n} \max_{j=1,\ldots,d} \max_{y\in\R^d} \left|D_j \chi_1(y)\right| 
            = \frac{\left\|\chi_1\right\|_{1,\infty}}{n}.
  \end{align*}
  For the $3$rd term we use  $\chi_n(x)=0$ for $|x|\geqslant 2n$ and 
 $D_j \chi_n(x)=0$ for $|x|\leqslant n$ to obtain
  \begin{align*}
          T_3 \leqslant &
    \  \frac{2}{p} \sum_{j=1}^{d} \int_{\R^d}\chi_n\theta_1 \left|v\right|^p \left|(Sx)_j\right| \left|D_j \chi_n\right| \\
            =& \frac{2}{p} \sum_{j=1}^{d} \int_{n\leqslant|x|\leqslant 2n}\chi_n \theta_1\left|v\right|^p \left|(Sx)_j\right| \left|D_j \chi_n\right| 
    \leqslant  \frac{4d\left|S\right|\left\|\chi_1\right\|_{1,\infty}}{p} \int_{n\leqslant|x|\leqslant 2n}\theta_1\left|v\right|^p.
  \end{align*}
  For the last estimate note that $\chi_n(x)\leqslant 1$ and
  \begin{align*}
             & \left|(Sx)_j\right| \left|D_j \chi_n(x)\right| 
            =  \frac{1}{n} \left|(Sx)_j\right| \left|\left(D_j \chi_1\right)\left(\frac{x}{n}\right)\right|
    \leqslant  \frac{1}{n} |S| |x| \left|\left(D_j \chi_1\right)\left(\frac{x}{n}\right)\right| \\
    \leqslant& \frac{|S|}{n} \big(\sup_{n\leqslant|\xi|\leqslant 2n}\left|\xi \right|\big) \max_{j=1,\ldots,d} \max_{y\in\R^d} \left|D_j \chi_1(y)\right|
            =  2\left|S\right|\left\|\chi_1\right\|_{1,\infty}.
  \end{align*}
  The $4$th term vanishes, as follows from  \eqref{equ:ConditionTheta1Version2} and \eqref{cond:A5},
  \begin{align*}
    T_4
    = - \frac{1}{p} \int_{\R^d}\chi_n^2\frac{-\mu_1}{\sqrt{|x|^2+1}}\theta_1\left|v\right|^p\sum_{j=1}^{d}x_j(Sx)_j
    = 0.
  \end{align*}
  For the $5$th term we use again $\Re z\leqslant |z|$, H{\"o}lder's inequality with $p=q=2$ and Young's inequality with some $\rho>0$, \eqref{equ:ConditionTheta1Version2} and $|\mu_1|\leqslant\mu_0$ 
  for some $\mu_0\geqslant 0$ that will be specified below
  \begin{align*}
           T_5 \leqslant  &       
\int_{\R^d}\chi_n^2\left|v\right|^{p-1}\sum_{j=1}^{d}\left|\frac{-\mu_1 x_j}{\sqrt{|x|^2+1}}\right|\theta_1 |A|\left|D_j v\right| 
    \leqslant |\mu_1||A|\sum_{j=1}^{d}\int_{\R^d}\chi_n^2\theta_1\left|v\right|^{p-1}\left|D_j v\right|\\
    \leqslant & |\mu_1||A|\sum_{j=1}^{d}\left(\int_{\R^d}\chi_n^2\theta_1\left|v\right|^{p-2}\left|D_j v\right|^2 \one_{\{v\neq 0\}}\right)^{\frac{1}{2}}
               \left(\int_{\R^d}\chi_n^2\theta_1\left|v\right|^p\right)^{\frac{1}{2}} \\
    \leqslant& \frac{\mu_0|A|}{4\rho}\sum_{j=1}^{d}\int_{\R^d}\chi_n^2\theta_1\left|v\right|^{p-2}\left|D_j v\right|^2 \one_{\{v\neq 0\}}
               + \mu_0|A|\rho d\int_{\R^d}\chi_n^2\theta_1\left|v\right|^p.
  \end{align*}
  Summarizing, we arrive at the following estimate
  \begin{align*}
             & (\Re\lambda)\int_{\R^d}\chi_n^2\theta_1\left|v\right|^p 
               + \int_{\R^d}\chi_n^2\theta_1\left|v\right|^{p-2}\Re\left\langle v,Bv\right\rangle \\
            + & 
\int_{\R^d}\chi_n^2\theta_1\left|v\right|^{p-4} \one_{\{v\neq 0\}}\sum_{j=1}^{d}\bigg[\left|v\right|^2\Re\left\langle D_j v,A D_j v\right\rangle
+ (p-2)\Re\left\langle D_j v,v\right\rangle \Re\left\langle v,A D_j v\right\rangle\bigg]
                \\
    \leqslant& C^{\frac{1}{p}}\left(\int_{\R^d}\chi_n^{2}\theta_1\left|v\right|^p\right)^{\frac{p-1}{p}} \left(\int_{\R^d}\chi_n^2\theta_2\left|g\right|^p\right)^{\frac{1}{p}}
               + \frac{2d|A| \left\|\chi_1\right\|_{1,\infty}}{4n\delta}\int_{\R^d}\theta_1\left|v\right|^p
 \\ &  
 + \frac{4d\left|S\right|\left\|\chi_1\right\|_{1,\infty}}{p} \int_{n\leqslant|x|\leqslant 2n}\theta_1\left|v\right|^p+
\frac{2|A| \left\|\chi_1\right\|_{1,\infty}\delta}{n}\sum_{j=1}^{d}\int_{\R^d}\chi_n^2\theta_1\left|D_j v\right|^2 \left|v\right|^{p-2}\one_{\{v\neq 0\}} \\
            + &  \frac{\mu_0|A|}{4\rho}\sum_{j=1}^{d}\int_{\R^d}\chi_n^2\theta_1\left|v\right|^{p-2}\left|D_j v\right|^2 \one_{\{v\neq 0\}}
               + \mu_0|A|\rho d\int_{\R^d}\chi_n^2\theta_1\left|v\right|^p.
  \end{align*}
  \item[4.] The $L^p$-dissipativity assumption \eqref{cond:A4DC} guarantees positivity of the term appearing in brackets $\left[\cdots\right]$ and \eqref{equ:StrictAccretivityForB} 
  provides a lower bound for $\Re\left\langle v,Bv\right\rangle$. Therefore, putting the last $3$ terms from the right-hand to the 
left-hand 
  side in the last inequality from step 3, we obtain
  \begin{align*}
             & \left(\gamma_A-\frac{\mu_0|A|}{4\rho}-\frac{2|A|\left\|\chi_1\right\|_{1,\infty}\delta}{n}\right)
                 \sum_{j=1}^{d}\int_{\R^d}\chi_n^2 \theta_1\left|D_j v\right|^2\left|v\right|^{p-2}\one_{\{v\neq 0\}}
 \\ 
               + & (\Re\lambda+c_B-\mu_0|A|\rho d)\int_{\R^d}\chi_n^2\theta_1\left|v\right|^p  
    \leqslant C^{\frac{1}{p}}\left(\int_{\R^d}\chi_n^2\theta_1\left|v\right|^p\right)^{\frac{p-1}{p}} \left(\int_{\R^d}\chi_n^2\theta_2\left|g\right|^p\right)^{\frac{1}{p}}
             \\  + & \frac{2d|A| \left\|\chi_1\right\|_{1,\infty}}{4n\delta}\int_{\R^d}\theta_1\left|v\right|^p 
            + \frac{4d\left|S\right|\left\|\chi_1\right\|_{1,\infty}}{p} \int_{n\leqslant|x|\leqslant 2n}\theta_1\left|v\right|^p.
  \end{align*}
  Now, we choose $\rho=\sqrt{\frac{\Re\lambda+c_B}{4d\gamma_A}}$, $\mu_0=\sqrt{\frac{(\Re\lambda+c_B)\gamma_A}{d|A|^2}}$ so that
  \begin{align*}
    \Re\lambda+c_B-\mu_0|A|\rho d =\frac{\Re\lambda+c_B}{2}\quad\text{and}\quad\gamma_A-\frac{\mu_0|A|}{4\rho}=\frac{\gamma_A}{2}.
  \end{align*}
  Then our estimate reads
\begin{equation} \label{equ:endenergy}
  \begin{aligned}
             & \frac{\Re\lambda+c_B}{2}\int_{\R^d}\chi_n^2\theta_1\left|v\right|^p 
               + \left(\frac{\gamma_A}{2}-\frac{2|A|\left\|\chi_1\right\|_{1,\infty}\delta}{n}\right)\sum_{j=1}^{d}\int_{\R^d}\chi_n^2 \theta_1\left|D_j v\right|^2\left|v\right|^{p-2}\one_{\{v\neq 0\}} \\
    \leqslant& C^{\frac{1}{p}}\left(\int_{\R^d}\chi_n^2\theta_1\left|v\right|^p\right)^{\frac{p-1}{p}} \left(\int_{\R^d}\chi_n^2\theta_2\left|g\right|^p\right)^{\frac{1}{p}}
               + \frac{2d|A| \left\|\chi_1\right\|_{1,\infty}}{4n\delta}\int_{\R^d}\theta_1\left|v\right|^p \\
             & + \frac{4d\left|S\right|\left\|\chi_1\right\|_{1,\infty}}{p} \int_{n\leqslant|x|\leqslant 2n}\theta_1\left|v\right|^p.
  \end{aligned}
\end{equation}

  \item[5.] Let us choose $\delta>0$ such that $\frac{\gamma_A}{2}-2|A|\left\|\chi_1\right\|_{1,\infty}\delta>0$.
Then we apply Fatou's Lemma to \eqref{equ:endenergy}
and take the limit inferior $n\to\infty$. First observe
that the terms   $\chi_n^2\theta_1|v|^p$ and $\chi_n ^2\theta_1\left(\frac{\gamma_A}{2}-\frac{2|A|\left\|\eta\right\|_{1,\infty}\delta}{n}\right)\left|D_j v\right|^2 \left|v\right|^{p-2}\one_{\{v\neq 0\}}$ on the left-hand side are positive functions in $L^1(\R^d,\R)$
and converge pointwise.
The convergence of
the integrals on the right-hand side of \eqref{equ:endenergy}
is justified by Lebesgue's dominated convergence theorem.
We have the pointwise convergence $\chi_n^2\theta_1|v|^p\to\theta_1|v|^p$, 
  $\chi_n^2\theta_2|g|^p\to\theta_2|g|^p$, $\frac{1}{n}\theta_1|v|^p\to 0$ and $\theta_1\left|v\right|^p\one_{\{n\leqslant|x|\leqslant 2n\}}\to 0$ for almost every 
  $x\in\R^d$ as $n\to\infty$. They are dominated by $|\chi_n^2\theta_1|v|^p|\leqslant\theta_1|v|^p$, $|\chi_n^2\theta_2|g|^p|\leqslant\theta_2|g|^p$, 
  $\frac{1}{n}\theta_1|v|^p\leqslant\theta_1|v|^p$, $\theta_1\left|v\right|^p\one_{\{n\leqslant|x|\leqslant 2n\}}\leqslant\theta_1|v|^p$, and the 
  bounds belong to $L^1(\R^d,\R)$ since $v\in L^p_{\theta_1}(\R^d,\C^N)$ and $g\in L^p_{\theta_2}(\R^d,\C^N)$.
Thus we arrive at
  \begin{align*}
             & \frac{\Re\lambda+c_B}{2}\left\|v\right\|_{L^p_{\theta_1}}^p
    \leqslant  \frac{\Re\lambda+c_B}{2}\int_{\R^d}\theta_1\left|v\right|^p
               + \frac{\gamma_A}{2}\sum_{j=1}^{d}\int_{\R^d}\theta_1\left|D_j v\right|^2\left|v\right|^{p-2}\one_{\{v\neq 0\}} \\
            \leqslant& C^{\frac{1}{p}}\left(\int_{\R^d}\theta_1\left|v\right|^p\right)^{\frac{p-1}{p}} \left(\int_{\R^d}\theta_2\left|g\right|^p\right)^{\frac{1}{p}}
            =  C^{\frac{1}{p}}\left\|v\right\|_{L^p_{\theta_1}}^{p-1} \left\|g\right\|_{L^p_{\theta_2}}.
  \end{align*}
  The $L^p_{\theta_1}$--resolvent estimate \eqref{equ:resolvetheta12}
follows by dividing both sides by $\frac{\Re\lambda+c_B}{2}$ and 
  $\left\|v\right\|_{L^p_{\theta_1}}^{p-1}$. 
  \\
  \item[6.] Unique solvability of the linear equation $(\lambda I-\L_B)v = g$ in $W^{2,p}_{\mathrm{loc}}(\R^d,\C^N)\cap L^p_{\theta_1}(\R^d,\C^N)$ clearly follows from the resolvent estimate
\eqref{equ:resolvetheta12}. From step 5 we obtain for every $j=1,\ldots,N$
  \begin{align*}
    \int_{\R^d}\theta_1\left|D_j v\right|^2\left|v\right|^{p-2}\one_{\{v\neq 0\}} \leqslant \frac{2C^{\frac{1}{p}}}{\gamma_A}\left\|v\right\|_{L^p_{\theta_1}}^{p-1}
    \left\|g\right\|_{L^p_{\theta_2}}.
  \end{align*}
 We take into account that $|D_jv|=|D_jv|\one_{\{v\neq 0\}}$ a.e.
(see e.g. \cite[Cor.2.1.8]{Ziemer1989}) and use the $L^p$--resolvent estimate \eqref{equ:resolvetheta12} to deduce from H\"older's inequality for $1<p\leqslant 2$
  \begin{align*}
             & \left\|D_j v\right\|_{L^p_{\theta_1}}^p
            =  \int_{\R^d}\theta_1\left|D_j v\right|^p \one_{\{v\neq 0\}}
            =  \int_{\R^d}\theta_1^{\frac{p}{2}}\left|D_j v\right|^p \left|v\right|^{-\frac{p(2-p)}{2}}\one_{\{v\neq 0\}} \theta_1^{\frac{2-p}{2}}\left|v\right|^{\frac{p(2-p)}{2}} \\
    \leqslant& \bigg(\int_{\R^d}\theta_1\left|D_j v\right|^2 \left|v\right|^{p-2}\one_{\{v\neq 0\}}\bigg)^{\frac{p}{2}}
               \bigg(\int_{\R^d}\theta_1\left|v\right|^{p}\bigg)^{\frac{2-p}{2}}
    \leqslant  \left(\frac{4C^{\frac{2}{p}}}{(\Re\lambda+c_B)\gamma_A}\right)^{\frac{p}{2}}\left\|g\right\|_{L^p_{\theta_2}}^p.
  \end{align*}
  \end{itemize}
\end{proof}

\subsection{Exponential decay for asymptotically small perturbations}
\label{subsec:3.4}
In this section we combine the results of Theorems \ref{thm:APrioriEstimatesInLpSmallPerturbation} and \ref{thm:WeightedResolventEstimates} 
to obtain exponential a-priori estimates of solutions to variable coefficient equations when the coefficients become small at infinity.

\begin{theorem}[A-priori estimates in weighted $L^p$-spaces]\label{thm:APrioriEstimatesInLpRelativelyCompactPerturbation}
  Let the assumptions \eqref{cond:A4DC}, \eqref{cond:A5}, \eqref{cond:A8B} and \eqref{cond:A10B} be satisfied for $1<p<\infty$ 
  and $\K=\C$. Consider the radial weight functions 
  \begin{align}
    \label{equ:WeightFunctionsLQ}
    \theta_j(x) = \exp\left(\mu_j\sqrt{|x|^2+1}\right),\;x\in\R^d,\;j=1,2,
  \end{align}
  with $\mu_1,\mu_2\in\R$, 
  \begin{align}
    \label{equ:ExponentialRatesLQ}
  -\sqrt{\varepsilon\frac{\gamma_A\beta_{\infty}}{2d|A|^2}}\leqslant\mu_1\leqslant
 0\leqslant\mu_2\leqslant\varepsilon\frac{\sqrt{\azero\bzero}}{\amax p}
  \end{align}
  for some $0<\varepsilon<1$. Moreover, let $Q\in L^{\infty}(\R^d,\C^{N,N})$ with
  \begin{align}
    \label{equ:ConditionOnQ}
    \underset{|x|\geqslant R_0}{\esssup}\left|Q(x)\right| \leqslant
\frac{\varepsilon}{2} \min\left\{\frac{\bzero}{\kappa\aone},\frac{\bzero}{C_{0,\varepsilon}},
\beta_{\infty}\right\}\,\text{for some $R_0>0$},
  \end{align}
  let $g\in L^p_{\theta_2}(\R^d,\C^N)$ and $\lambda\in\C$ be given
with $\Re\lambda\geqslant -(1-\varepsilon)\beta_{\infty}$ .\\
  Then every solution $v\in W^{2,p}_{\mathrm{loc}}(\R^d,\C^N)\cap L^p_{\theta_1}(\R^d,\C^N)$ of the resolvent equation $\left(\lambda I-\L_Q\right)v=g$ in 
  $L^p_{\mathrm{loc}}(\R^d,\C^N)$ satisfies $v\in W^{1,p}_{\theta_2}(\R^d,\C^N)$. Moreover, the following estimates hold:
  \begin{align}
    \left\|v\right\|_{L^p_{\theta_2}} \leqslant& \frac{2C_{0,\varepsilon}}{\Re\lambda+\bzero}\left(C\left\|v\right\|_{L^p_{\theta_1}}
                                                                    +\left\|g\right\|_{L^p_{\theta_2}}\right), \label{equ:equ:ExpDecStatVstarVariableCoefficientPerturbation}\\
    \left\|D_i v\right\|_{L^p_{\theta_2}} \leqslant& \frac{2C_{1,\varepsilon}}{\left(\Re\lambda+\bzero\right)^{\frac{1}{2}}}\left(C\left\|v\right\|_{L^p_{\theta_1}}+\left\|g\right\|_{L^p_{\theta_2}}\right), \quad i=1,\ldots,d, \label{equ:equ:equ:ExpDecStatDiVstarVariableCoefficientPerturbation}   
  \end{align}
  with constants $\azero,\aone,\amax$ from \eqref{equ:aminamaxazerobzero}, $\gamma_A$ from \eqref{cond:A4DC}, $\bzero,\kappa$ from \eqref{equ:constant2}, 
  $\beta_{\infty}$ from \eqref{cond:A10B}, $C_{0,\varepsilon},C_{1,\varepsilon}$ from Theorem \ref{thm:APrioriEstimatesInLpConstantCoefficients} (with $C_{\theta}=1$), 
  and $C:=\exp\left((\mu_2-\mu_1)(4R_0^2+1)^{\frac{1}{2}}\right)\left\|Q\right\|_{L^{\infty}}$.
\end{theorem}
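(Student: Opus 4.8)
The plan is to peel off from $Q$ a large but compactly supported piece, treat the rest as a genuinely small perturbation, and then glue the existence statement of Theorem~\ref{thm:APrioriEstimatesInLpSmallPerturbation} (valid in the small space $L^p_{\theta_2}(\R^d,\C^N)$) to the uniqueness statement of Theorem~\ref{thm:WeightedResolventEstimates} (valid in the large space $L^p_{\theta_1}(\R^d,\C^N)$). Concretely, I would fix a cut-off $\chi\in C^{\infty}_{\mathrm{c}}(\R^d,\R)$ with $\chi\equiv 1$ on $\overline{B_{R_0}(0)}$ and $\supp\chi\subseteq\overline{B_{2R_0}(0)}$, and split $Q=Q_{\mathrm{s}}+Q_{\mathrm{c}}$ with $Q_{\mathrm{c}}:=\chi Q$ and $Q_{\mathrm{s}}:=(1-\chi)Q$. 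Then $Q_{\mathrm{c}}\in L^{\infty}(\R^d,\C^{N,N})$ has compact support, while $\left\|Q_{\mathrm{s}}\right\|_{L^{\infty}}\leqslant\esssup_{|x|\geqslant R_0}\left|Q(x)\right|$, so \eqref{equ:ConditionOnQ} yields both the smallness bound \eqref{equ:Qsmall} required in Theorem~\ref{thm:APrioriEstimatesInLpSmallPerturbation} and the bound $\left\|Q_{\mathrm{s}}\right\|_{L^{\infty}}\leqslant\tfrac{\varepsilon}{2}\beta_{\infty}$. Since $\L_Q v=\L_{\mathrm{s}}v+Q_{\mathrm{c}}v$ pointwise, the equation $\left(\lambda I-\L_Q\right)v=g$ rewrites as $\left(\lambda I-\L_{\mathrm{s}}\right)v=g+Q_{\mathrm{c}}v=:\tilde g$ in $L^p_{\mathrm{loc}}(\R^d,\C^N)$.

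Next I would check that $\tilde g$ lands in the small space with the expected constant. Indeed $g\in L^p_{\theta_2}(\R^d,\C^N)$ by hypothesis, and $Q_{\mathrm{c}}v$ is bounded with compact support, since $v\in L^p_{\theta_1}(\R^d,\C^N)\subseteq L^p_{\mathrm{loc}}(\R^d,\C^N)$; comparing the weights on $\overline{B_{2R_0}(0)}$, where $\theta_2(x)/\theta_1(x)=\exp\big((\mu_2-\mu_1)\sqrt{|x|^2+1}\big)\leqslant\exp\big((\mu_2-\mu_1)(4R_0^2+1)^{1/2}\big)$ because $\mu_2-\mu_1\geqslant 0$, one gets $Q_{\mathrm{c}}v\in L^p_{\theta_2}(\R^d,\C^N)$ together with $\left\|Q_{\mathrm{c}}v\right\|_{L^p_{\theta_2}}\leqslant C\left\|v\right\|_{L^p_{\theta_1}}$, $C=\exp\big((\mu_2-\mu_1)(4R_0^2+1)^{1/2}\big)\left\|Q\right\|_{L^{\infty}}$. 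Moreover $B(x):=B_{\infty}-Q_{\mathrm{s}}(x)\in L^{\infty}(\R^d,\C^{N,N})$ satisfies the accretivity condition \eqref{equ:StrictAccretivityForB} with $c_B:=\beta_{\infty}-\left\|Q_{\mathrm{s}}\right\|_{L^{\infty}}\geqslant(1-\tfrac{\varepsilon}{2})\beta_{\infty}>0$, so that $\L_B=\L_{\mathrm{s}}$ in the notation of \eqref{equ:operators}.

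With these preparations in place I would run the two-sided argument. For existence, apply Theorem~\ref{thm:APrioriEstimatesInLpSmallPerturbation} to $\left(\lambda I-\L_{\mathrm{s}}\right)w=\tilde g$ with weight $\theta_2$: its hypotheses hold since $\mu_2$ satisfies \eqref{equ:etarestrict} by \eqref{equ:ExponentialRatesLQ}, $Q_{\mathrm{s}}$ satisfies \eqref{equ:Qsmall}, and $\Re\lambda\geqslant-(1-\varepsilon)\beta_{\infty}\geqslant-(1-\varepsilon)\bzero$ because $\beta_{\infty}\leqslant\bzero$; this produces a unique $w\in\D^p_{\mathrm{loc}}(\L_0)\cap W^{1,p}_{\theta_2}(\R^d,\C^N)$ with $\left\|w\right\|_{L^p_{\theta_2}}\leqslant\tfrac{2C_{0,\varepsilon}}{\Re\lambda+\bzero}\left\|\tilde g\right\|_{L^p_{\theta_2}}$ and $\left\|D_iw\right\|_{L^p_{\theta_2}}\leqslant\tfrac{2C_{1,\varepsilon}}{(\Re\lambda+\bzero)^{1/2}}\left\|\tilde g\right\|_{L^p_{\theta_2}}$. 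For uniqueness, note that $\mu_1\leqslant 0\leqslant\mu_2$ gives $\theta_1\leqslant\theta_2$, hence $w\in L^p_{\theta_2}(\R^d,\C^N)\subseteq L^p_{\theta_1}(\R^d,\C^N)$ and $w\in W^{2,p}_{\mathrm{loc}}(\R^d,\C^N)$; so $v$ and $w$ both belong to $W^{2,p}_{\mathrm{loc}}(\R^d,\C^N)\cap L^p_{\theta_1}(\R^d,\C^N)$ and both solve $\left(\lambda I-\L_B\right)u=\tilde g$ in $L^p_{\mathrm{loc}}(\R^d,\C^N)$. Theorem~\ref{thm:WeightedResolventEstimates} applies with this $B$, since $\Re\lambda+c_B\geqslant\tfrac{\varepsilon}{2}\beta_{\infty}>0$, the rate of $\theta_1$ obeys $|\mu_1|\leqslant\sqrt{\varepsilon\gamma_A\beta_{\infty}/(2d|A|^2)}\leqslant\sqrt{(\Re\lambda+c_B)\gamma_A/(d|A|^2)}$ by \eqref{equ:ExponentialRatesLQ}, and $\theta_1\leqslant\theta_2$ is \eqref{equ:RelationTheta1Theta2Version2}; its uniqueness assertion forces $v=w$, so $v\in W^{1,p}_{\theta_2}(\R^d,\C^N)$.

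Finally, substituting $\left\|\tilde g\right\|_{L^p_{\theta_2}}\leqslant\left\|g\right\|_{L^p_{\theta_2}}+\left\|Q_{\mathrm{c}}v\right\|_{L^p_{\theta_2}}\leqslant\left\|g\right\|_{L^p_{\theta_2}}+C\left\|v\right\|_{L^p_{\theta_1}}$ into the two bounds for $w=v$ gives \eqref{equ:equ:ExpDecStatVstarVariableCoefficientPerturbation} and \eqref{equ:equ:equ:ExpDecStatDiVstarVariableCoefficientPerturbation}. The step I expect to carry the real content is not any single estimate but the bookkeeping that makes small-space existence and large-space uniqueness meet: one must verify that the single asymptotic smallness hypothesis \eqref{equ:ConditionOnQ} simultaneously controls the contraction constant feeding Theorem~\ref{thm:APrioriEstimatesInLpSmallPerturbation} and the accretivity constant $c_B$ feeding Theorem~\ref{thm:WeightedResolventEstimates}, and that the three appearances of $\varepsilon$ (in the admissible ranges of $\mu_2$, of $\mu_1$, and of $\Re\lambda$) are mutually compatible, so that both theorems may be invoked for the same $\lambda$ and the same weights; the comparison of $\theta_1$ and $\theta_2$ on $\supp Q_{\mathrm{c}}$ is then routine.
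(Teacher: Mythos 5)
Your proposal reproduces the paper's argument essentially verbatim: the same cut-off decomposition $Q=Q_{\mathrm{s}}+Q_{\mathrm{c}}$ with $Q_{\mathrm{c}}$ supported in $\overline{B_{2R_0}(0)}$, the same use of Theorem~\ref{thm:APrioriEstimatesInLpSmallPerturbation} (with $\theta=\theta_2$ and inhomogeneity $Q_{\mathrm{c}}v+g$) for existence of a decaying solution, the same application of Theorem~\ref{thm:WeightedResolventEstimates} (with $B=B_{\infty}-Q_{\mathrm{s}}$, $c_B=(1-\tfrac{\varepsilon}{2})\beta_{\infty}$) for uniqueness in the larger space $L^p_{\theta_1}$, and the same weight-comparison constant $C$. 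The verifications you single out at the end (that \eqref{equ:ConditionOnQ} feeds both the smallness bound \eqref{equ:Qsmall} and the accretivity constant $c_B$, and that the rate constraints on $\mu_1,\mu_2$ and $\Re\lambda$ are mutually compatible, using $\beta_{\infty}\leqslant b_0$) are exactly the bookkeeping the paper carries out, so this is a correct proof along the paper's route.
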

\begin{remark} Note that the exponential decay rate $\mu_2$ in
  \eqref{equ:ExponentialRatesLQ} depend on the spectral data $a_0,b_0,\amax$, while
the growth rate $\theta_1$ allowing uniqueness, depends on the norm and accretivity
data $\gamma_A,\beta_{\infty},|A|$.
  \end{remark}
\begin{proof}
  The proof is structured as follows: First we decompose of $Q$ into the sum of $Q_{\mathrm{s}}$ and $Q_{\mathrm{c}}$, where $Q_{\mathrm{s}}$ is small according to 
  \eqref{equ:ConditionOnQ} and $Q_{\mathrm{c}}$ is compactly supported on $\R^d$ (step 1). We then consider the equation $(\lambda I-\L_{\mathrm{s}})u=Q_{\mathrm{c}} v+g$ 
  and prove existence of a solution in the smaller space $W^{2,p}_{\mathrm{loc}}(\R^d,\C^N)\cap L^p_{\theta_2}(\R^d,\C^N)$ by an application of Theorem 
  \ref{thm:APrioriEstimatesInLpSmallPerturbation} (step 2) and uniqueness  in the larger space $W^{2,p}_{\mathrm{loc}}(\R^d,\C^N)\cap L^p_{\theta_1}(\R^d,\C^N)$ 
  by an application of Theorem \ref{thm:WeightedResolventEstimates} (step 3).
  
  \begin{itemize}[leftmargin=0.43cm]\setlength{\itemsep}{0.1cm}
  \item[1.] Decomposition of $Q$:  With the cut-off function
$\chi_{R_0}$ from \eqref{equ:definechi} and $R_0$ from
\eqref{equ:ConditionOnQ} let us write 
    \begin{align*}
    Q(x) = Q_{\mathrm{s}}(x) + Q_{\mathrm{c}}(x),\quad Q_{\mathrm{s}}(x):=(1-\chi_{R_0}(x))Q(x),\quad  Q_{\mathrm{c}}(x):=\chi_{R_0}(x)Q(x).
  \end{align*}
  Then $Q_{\mathrm{c}}$ is compactly supported 
  and $Q_{\mathrm{s}}$ satisfies due to \eqref{equ:ConditionOnQ}
  \begin{align*}
              \left\|Q_{\mathrm{s}}\right\|_{L^{\infty}}
           \leqslant \left\|1-\chi_{R_0}\right\|_{\infty} \left\|Q\right\|_{L^{\infty}(\R^d\backslash B_{R_0},\C^{N,N})} 
    \leqslant \frac{\varepsilon}{2} \min\left\{\frac{\bzero}{\kappa\aone},\frac{\bzero}{C_{0,\varepsilon}},
\beta_{\infty}\right\}.
  \end{align*}
Let $v\in W^{2,p}_{\mathrm{loc}}(\R^d,\C^N)\cap L^p_{\theta_1}(\R^d,\C^N)$ be a solution of $\left(\lambda I-\L_Q\right)v=g$
   with $\Re\lambda\geqslant -(1-\varepsilon)\beta_{\infty}$ and
 $g\in L^p_{\theta_2}(\R^d,\C^N)$. 
  Then $v$ 
satisfies $\left(\lambda I-\L_{\mathrm{s}}\right)v = Q_{\mathrm{c}} v+g$ 
in $L^p_{\mathrm{loc}}(\R^d,\C^N)$.
Therefore, we consider the problem
  \begin{align}
    \label{equ:ustarEquation}
    \left(\lambda I-\L_{\mathrm{s}}\right)u = Q_{\mathrm{c}} v+g,\,\text{in $L^p_{\theta_1}(\R^d,\C^N)$ and in $L^p_{\theta_2}(\R^d,\C^N)$.}
  \end{align}
  
  \item[2.] Existence in $W^{2,p}_{\mathrm{loc}}(\R^d,\C^N)\cap L^p_{\theta_2}(\R^d,\C^N)$:  
  Let us to  apply Theorem \ref{thm:APrioriEstimatesInLpSmallPerturbation} to 
\eqref{equ:ustarEquation} with $\theta=\theta_2$, $\eta=|\mu_2|$ and $Q_{\mathrm{c}} v+g$ instead of $g$. 
  Note that $Q_{\mathrm{c}}v+g \in L^p_{\theta_2}(\R^d,\C^N)$ follows from
  \begin{align}
    \label{equ:EstimateInhomogeneity}
    \begin{split}
             & \left\|Q_{\mathrm{c}} v+g\right\|_{L^p_{\theta_2}}
     \leqslant  \left\|\theta_2 Q_{\mathrm{c}} v\right\|_{L^p}+\left\|g\right\|_{L^p_{\theta_2}} \\
     \leqslant& 
\left\|\theta_2\theta_1^{-1}\right\|_{L^{\infty}(B_{2R_0},\R)} 
\left\|Q\right\|_{L^{\infty}} 
\left\|v\right\|_{L^p_{\theta_1}}+\left\|g\right\|_{L^p_{\theta_2}} 
             = C\left\|v\right\|_{L^p_{\theta_1}}+\left\|g\right\|_{L^p_{\theta_2}},
    \end{split}
  \end{align}
  with $C:=\exp\left((\mu_2-\mu_1)(4R_0^2+1)^{\frac{1}{2}}\right)\left\|Q\right\|_{L^{\infty}}$. Further, \eqref{cond:A9B} follows from \eqref{cond:A10B}, $\beta_{\infty}\leqslant\bzero$ and $\varepsilon<1$  
  imply $\Re\lambda\geqslant -(1-\varepsilon)\bzero$ and  $\theta_2$ is radially nondecreasing since $\mu_2\geqslant 0$. 
Theorem 
  \ref{thm:APrioriEstimatesInLpSmallPerturbation} yields that there exists a (unique)  $u\in\D^p_{\mathrm{loc}}(\L_0) \subset W^{2,p}_{\mathrm{loc}}(\R^d,\C^N)\cap L^p(\R^d,\C^N)$ which solves 
   \eqref{equ:ustarEquation} in $L^p(\R^d,\C^N)$. 
Moreover, Theorem \ref{thm:APrioriEstimatesInLpSmallPerturbation} assures $u\in W^{1,p}_{\theta_2}(\R^d,\C^N)$ as well as the  estimates \eqref{equ:ExpDecStatVstarBoundedPerturbation} 
  and \eqref{equ:ExpDecStatDiVstarBoundedPerturbation}.
  \item[3.] Uniqueness in $W^{2,p}_{\mathrm{loc}}(\R^d,\C^N)\cap L^p_{\theta_1}(\R^d,\C^N)$: Now consider 
  \eqref{equ:ustarEquation} in $L^p_{\theta_1}(\R^d,\C^N)$. 
  We apply Theorem \ref{thm:WeightedResolventEstimates} with $B(x)=B_{\infty}-Q_{\mathrm{s}}(x)$ and $Q_{\mathrm{c}} v+g$ instead of $g$. 
  First, $B\in L^{\infty}(\R^d,\C^{N,N})$ follows from $B_{\infty}\in\C^{N,N}$ and $Q_{\mathrm{s}}\in L^{\infty}(\R^d,\C^{N,N})$.
Then, strict accretivity \eqref{equ:StrictAccretivityForB} with $c_B=\left(1-\frac{\varepsilon}{2}\right)\beta_{\infty}$
  is a consequence of \eqref{cond:A10B} and $\left\|Q_{\mathrm{s}}\right\|_{L^{\infty}}\leqslant\varepsilon\frac{\beta_{\infty}}{2}$,
  \begin{align*}
    \Re\left\langle w,B(x)w\right\rangle 
                                         \geqslant& \left(\beta_{\infty}-\left\|Q_{\mathrm{s}}\right\|_{L^{\infty}}\right)|w|^2
                                         \geqslant \left(1-\frac{\varepsilon}{2}\right)\beta_{\infty}|w|^2\quad\forall\,w\in\C^N\;\forall\,x\in\R^d.
  \end{align*}
  Moreover, we have $\Re\lambda \geqslant -\left(1-\varepsilon\right)\beta_{\infty}\geqslant -\left(1-\frac{\varepsilon}{2}\right)\beta_{\infty} = -c_B$. 
  The growth bound in \eqref{equ:ConditionTheta1Version2} is implied by \eqref{equ:ExponentialRatesLQ} and  $\Re\lambda+c_B\geqslant \frac{\varepsilon}{2}\beta_{\infty}$,
  \begin{align*}
    0 \leqslant |\mu_1| \leqslant \sqrt{\varepsilon\frac{\beta_{\infty}\gamma_A}{2d|A|^2}} \leqslant \sqrt{\frac{(\Re\lambda+c_B)\gamma_{A}}{d|A|^2}}.
  \end{align*}
  Finally, inequality \eqref{equ:RelationTheta1Theta2Version2} is obvious with $C=1$, since $\mu_1\leqslant 0\leqslant\mu_2$.
  Let $u\in W^{2,p}_{\mathrm{loc}}(\R^d,\C^N)\cap L^p_{\theta_2}(\R^d,\C^N)\subseteq W^{2,p}_{\mathrm{loc}}(\R^d,\C^N)\cap L^p_{\theta_1}(\R^d,\C^N)$ denote the solution from step $2$  of the equation
   $\left(\lambda I-\L_{\mathrm{s}}\right)u = Q_{\mathrm{c}} v+g$ in $L^p_{\mathrm{loc}}(\R^d,\C^N)$. Since the given   
  $v\in W^{2,p}_{\mathrm{loc}}(\R^d,\C^N)\cap L^p_{\theta_1}(\R^d,\C^N)$ 
solves the same equation, the difference $w=u-v$ solves the
homogeneous equation $(\lambda I- \L_{\mathrm{c}})w=0$
in $L^p_{\theta_1}(\R^d,\C^N)$, 
and Theorem \ref{thm:WeightedResolventEstimates} implies
   $\left\|w\right\|_{L^p_{\theta_1}}=0$.  Therefore,
we obtain 
  $v=u\in W^{1,p}_{\theta_2}(\R^d,\C^N)$.
  \item[4.] $L^p_{\theta_2}$- and $W^{1,p}_{\theta_2}$-estimates: 
  The $L^p_{\theta_2}$-estimate follows from \eqref{equ:ExpDecStatVstarBoundedPerturbation} and \eqref{equ:EstimateInhomogeneity}
  \begin{align*}
               \left\|v\right\|_{L^p_{\theta_2}} = \left\|u\right\|_{L^p_{\theta_2}}
    \leqslant& \frac{2C_{0,\varepsilon}}{\Re\lambda+\bzero}\left\|Q_{\mathrm{c}} v+g\right\|_{L^p_{\theta_2}} 
    \leqslant \frac{2C_{0,\varepsilon}}{\Re\lambda+\bzero}\left(C\left\|v\right\|_{L^p_{\theta_1}}+\left\|g\right\|_{L^p_{\theta_2}}\right).
  \end{align*}
  Analogously, the $W^{1,p}_{\theta_2}$-estimate follows from  \eqref{equ:ExpDecStatDiVstarBoundedPerturbation} and \eqref{equ:EstimateInhomogeneity}.
  \end{itemize}
\end{proof}


%
%
\sect{Exponential decay of rotating nonlinear waves}
\label{sec:4}

\subsection{Proof of main result}
\label{subsec:4.1}

\begin{proof}[Proof (of Theorem \ref{thm:NonlinearOrnsteinUhlenbeckSteadyState})]
  The proof is structured as follows: First we decompose the nonlinearity $f(v_{\star}(x))$ and derive an equation $\L_Q w_{\star}=0$ 
  solved by $w_{\star}=v_{\star}-v_{\infty}$ (step 1). Then we apply Theorem \ref{thm:APrioriEstimatesInLpRelativelyCompactPerturbation} (step 2) and check 
its assumptions (steps 3,4).

  \begin{itemize}[leftmargin=0.43cm]\setlength{\itemsep}{0.1cm}
  \item[1.] Let $v_{\star}$ be a classical solution of \eqref{equ:NonlinearProblemRealFormulation} satisfying \eqref{equ:BoundednessConditionForVStar}. 
  Note that this implies $v_{\star}\in C_{\mathrm{b}}(\R^d,\R^N)$. Using  \eqref{cond:A6} and \eqref{cond:A7} we obtain from the mean value theorem 
  \begin{align*}
    f(v_{\star}(x)) =& f(v_{\infty}) + Df(v_{\infty})\left(v_{\star}(x)-v_{\infty}\right) \\
                     &  +\int_{0}^{1}\left(Df(v_{\infty}+t(v_{\star}(x)-v_{\infty}))-Df(v_{\infty})\right)dt \left(v_{\star}(x)-v_{\infty}\right) \\
                    =& -B_{\infty}\left(v_{\star}(x)-v_{\infty}\right)+Q(x)\left(v_{\star}(x)-v_{\infty}\right),\,x\in\R^d
  \end{align*}
  with
  \begin{align}
    \label{equ:MatrixBandQ}
    B_{\infty}:=-Df(v_{\infty}),\quad Q(x):=\int_{0}^{1}\left(Df(v_{\infty}+t(v_{\star}(x)-v_{\infty}))-Df(v_{\infty})\right)dt.
  \end{align}
   For $w_{\star}:=v_{\star}-v_{\infty}$, we have $w_{\star}\in C^2(\R^d,\R^N)\cap C_{\mathrm{b}}(\R^d,\R^N)$ and 
  \begin{align*}
    0 =& A\triangle v_{\star}(x) + \left\langle Sx,\nabla v_{\star}(x)\right\rangle + f(v_{\star}(x)) \\
      =& A\triangle \left(v_{\star}(x)-v_{\infty}\right) + \left\langle Sx,\nabla \left(v_{\star}(x)-v_{\infty}\right)\right\rangle
         -B_{\infty}\left(v_{\star}(x)-v_{\infty}\right) + Q(x)\left(v_{\star}(x)-v_{\infty}\right) \\
      =& A\triangle w_{\star}(x) + \left\langle Sx,\nabla w_{\star}(x)\right\rangle -B_{\infty}w_{\star}(x) + Q(x)w_{\star}(x) = \left[\L_{Q}w_{\star}\right](x),\,x\in\R^d.
  \end{align*}
  \item[2.] Let us apply Theorem \ref{thm:APrioriEstimatesInLpRelativelyCompactPerturbation} with $B_{\infty},Q$ from \eqref{equ:MatrixBandQ}, $\theta_2=\theta$, 
  $\mu_2=\mu$, $\mu_1<0$, $\lambda=0$ and $g=0$. For this purpose, we have to check the assumptions: 
  Assumptions \eqref{cond:A4DC} and \eqref{cond:A5} are directly satisfied, \eqref{cond:A8B} follows from \eqref{cond:A8}, and \eqref{cond:A10B} from \eqref{cond:A10}, 
  using the relation $B_{\infty}=-Df(v_{\infty})$. In the following let $0<\varepsilon<1$ be fixed and let $\theta_1,\theta_2\in C(\R^d,\R)$ be given by \eqref{equ:WeightFunctionsLQ} 
  satisfying $\mu_1<0$ and \eqref{equ:ExponentialRatesLQ}. First, note that $w_{\star}\in W^{2,p}_{\mathrm{loc}}(\R^d,\C^N)\cap L^p_{\theta_1}(\R^d,\C^N)$ follows from 
  $w_{\star}\in C^2(\R^d,\R^N)\cap C_{\mathrm{b}}(\R^d,\R^N)$ and $C_{\mathrm{b}}(\R^d,\R^N)\subset L^p_{\theta_1}(\R^d,\C^N)$ due to $\mu_1 <0$. It remains to verify that 
  $Q\in L^{\infty}(\R^d,\C^{N,N})$ (step 3) and that \eqref{equ:ConditionOnQ} is satisfied.
  \item[3.]  Since $w_{\star}\in C_{\mathrm{b}}(\R^d,\R^N)$ we obtain
  \begin{align*}
              \left|v_{\infty}+t w_{\star}(x)\right|
    \leqslant \left|v_{\infty}\right|+t\left|w_{\star}(x)\right|
    \leqslant \left|v_{\infty}\right|+\left\|w_{\star}\right\|_{\infty} =: R_1
  \end{align*}
  for every $x\in\R^d$ and $0\leqslant t\leqslant 1$. Due to \eqref{cond:A6}, we have $f\in C^1(\R^N,\R^N)$ which implies 
  \begin{align*}
              \left|Q(x)\right|
    \leqslant  \int_{0}^{1}\left|Df(v_{\infty}+t w_{\star}(x))\right| + \left|Df(v_{\infty})\right| dt 
    \leqslant  \sup_{z\in B_{R_1}(0)}\left|Df(z)\right| + \left|Df(v_{\infty})\right| < \infty
  \end{align*}
  for all $x\in\R^d$. Taking the suprema over $x\in\R^d$ we find 
  $Q\in C_{\mathrm{b}}(\R^d,\R^{N,N}) \subset L^{\infty}(\R^d,\C^{N,N})$.
  \item[4.] We finally verify \eqref{equ:ConditionOnQ}: Let us choose $K_1=K_1(A,f,v_{\infty},d,p,\varepsilon)>0$ such that
  \begin{align}
    \label{equ:thresholdconstant}
    K_1\left(\sup_{z\in B_{K_1}(v_{\infty})}\left|D^2 f(z)\right|\right)\leqslant
    \varepsilon \min\left\{\frac{\bzero}{\kappa\aone},
    \frac{\bzero}{C_{0,\varepsilon}},\beta_{\infty}\right\}=: K(\varepsilon)
  \end{align}
  is satisfied, with the constants $C_{0,\varepsilon}=C_{0,\varepsilon}(A,d,p,\varepsilon,\kappa)$ from Theorem \ref{thm:APrioriEstimatesInLpConstantCoefficients}, 
  $\bzero:=-s(Df(v_{\infty}))$ and $\aone$ from \eqref{equ:aminamaxazerobzero}, $\beta_{\infty}:=\beta_{-Df(v_{\infty})}$ from \eqref{cond:A10}, and
  \begin{align*}
    \left|D^2 f(z)\right| := \left\|D^2 f(z)\right\|_{\L(\R^N,\R^{N,N})} := \sup_{v\in\R^N\atop |v|=1}\left|D^2 f(z)v\right|.
  \end{align*}
  Since $f\in C^2(\R^N,\R^N)$ by \eqref{cond:A6}, inequalities  \eqref{equ:BoundednessConditionForVStar} and \eqref{equ:thresholdconstant} lead to
  \begin{align*}
               \left|Q(x)\right|
            =& \left|\int_{0}^{1}Df(v_{\infty}+tw_{\star}(x))-Df(v_{\infty})dt\right|
            = \left|\int_{0}^{1}\int_{0}^{1}D^2 f(v_{\infty}+st w_{\star}(x))[tw_{\star}(x)]ds dt\right| \\
    \leqslant& \int_{0}^{1}\int_{0}^{1}\sup_{|x|\geqslant R_0}\left|D^2 f(v_{\infty}+st(v_{\star}(x)-v_{\infty}))\right| ds\cdot t 
               \sup_{|x|\geqslant R_0}\left|v_{\star}(x)-v_{\infty}\right| dt \\
    \leqslant& \frac{K_1}{2}\left(\sup_{z\in B_{K_1}(v_{\infty})}\left|D^2 f(z)\right|\right)
    \leqslant \frac{\varepsilon}{2} \min\left\{\frac{\bzero}{\kappa\aone},\frac{\bzero}{C_{0,\varepsilon}},\beta_{\infty}\right\}
  \end{align*}
  for every $|x|\geqslant R_0$. Taking the supremum over $|x|\geqslant R_0$ yields condition \eqref{equ:ConditionOnQ}.
  \end{itemize}
  This justifies the application of Theorem \ref{thm:APrioriEstimatesInLpRelativelyCompactPerturbation} which then shows $w_{\star}=v_{\star}-v_{\infty}\in W^{1,p}_{\theta}(\R^d,\R^N)$.
\end{proof}

\subsection{Exponential decay of higher order derivatives}
\label{subsec:4.2}

For estimating higher order derivatives, recall the Sobolev embedding for $0\leqslant l\leqslant k$,
\begin{equation} 
  \label{eq:sobolevembed}
  W^{k,p}(\R^d,\R^N)\subseteq W^{l,q}(\R^d,\R^N), \quad \text{if} \; 1<p<q\leqslant\infty,\quad \frac{d}{p}-k\leqslant\frac{d}{q}-l,
\end{equation}
where at least one of the inequalities '$\leqslant$' is strict. Moreover, the embedding is continuous, i.e.
\begin{align*}
  \exists\,C_{p,q,k,l}>0:\;\left\|u\right\|_{W^{l,q}(\R^d)}\leqslant C_{p,q,k,l}\left\|u\right\|_{W^{k,p}(\R^d)}\;\forall\,u\in W^{k,p}(\R^d).
\end{align*}
For the Sobolev embedding we refer to \cite[Theorem 5.4]{Adams1975}, \cite[Chapter 6]{Nikolskij1975}, \cite[Chapter 8]{Alt2006} as general reference, and to 
\cite[Theorem 3, Exercise 24]{Tao2009} for the compact version used in \eqref{eq:sobolevembed}.
For the corresponding weighted spaces it is important to note that
\begin{equation} 
  \label{eq:sobolevweights}
  u \in W_{\theta}^{k,p}(\R^d,\R^N) \Longrightarrow \theta D^{\alpha}u \in W^{k-|\alpha|,p}(\R^d,\R^N) \quad \text{for} \quad 0 \leqslant |\alpha| \leqslant k.
\end{equation}
Here $\theta$ is chosen as in Theorem \ref{thm:NonlinearOrnsteinUhlenbeckSteadyState} for some $0<\varepsilon<1$. By definition, $u\in W^{k,p}_{\theta}(\R^d,\R^N)$ 
implies $D^{\alpha}u\in W^{k-|\alpha|,p}_{\theta}(\R^d,\R^N)$ for every $0\leqslant|\alpha|\leqslant k$. Since $\theta$ belongs to $C^{\infty}(\R^d,\R)$ and satisfies 
$\left\|\frac{D^{\gamma}\theta}{\theta}\right\|_{L^{\infty}}\leqslant C(\gamma)$ for every $\gamma\in\N_0^d$, we obtain $\theta D^{\alpha}u \in W^{k-|\alpha|,p}(\R^d,\R^N)$ 
from
\begin{align*}
             \left\|\theta D^{\alpha}u\right\|_{W^{k-|\alpha|,p}}^p 
          =& \sum_{|\beta|\leqslant k-|\alpha|}\left\|D^{\beta}(\theta D^{\alpha}u)\right\|_{L^p}^p
          =  \sum_{|\beta|\leqslant k-|\alpha|}\bigg\|\sum_{|\gamma|\leqslant|\beta|}\binom{\beta}{\gamma} (D^{\gamma}\theta) (D^{\alpha+\beta-\gamma}u)\bigg\|_{L^p}^p \\
  \leqslant& \sum_{|\beta|\leqslant k-|\alpha|}\bigg(\sum_{|\gamma|\leqslant|\beta|}\binom{\beta}{\gamma}\left\|\frac{D^{\gamma}\theta}{\theta}\right\|_{L^{\infty}} 
             \left\|\theta D^{\alpha+\beta-\gamma}u\right\|_{L^p}\bigg)^p\leqslant C \|u\|_{W_{\theta}^{k,p}}^p.
\end{align*}
Finally, recall the generalized H\"older's inequality
\begin{equation} 
  \label{eq:hoeldergeneral}
  \big\| \prod_{j=1}^{\ell} u_j \big\|_{L^p(\R^d,\R)} \le \prod_{j=1}^{\ell} \|u_j\|_{L^{p_j}(\R^d,\R)},
  \quad \text{for} \; u_j \in L^{p_j}(\R^d,\R), \; 1 \leqslant p,\,p_j \leqslant \infty, \; \sum_{j=1}^{\ell} \frac{1}{p_j} = \frac{1}{p}.
\end{equation}

The following corollary shows, that if we assume more regularity for the solution $v_{\star}$ of \eqref{equ:NonlinearProblemRealFormulation} in Theorem 
\ref{thm:NonlinearOrnsteinUhlenbeckSteadyState}, i.e. $v_{\star}\in C^3(\R^d,\R^N)$, then $v_{\star}$ even belongs to $W^{2,p}_{\theta}(\R^d,\R^N)$. 
The proof is based on the results of Theorem \ref{thm:NonlinearOrnsteinUhlenbeckSteadyState} and on a further application of Theorem 
\ref{thm:APrioriEstimatesInLpRelativelyCompactPerturbation}. For the proof it is crucial that we allow inhomogeneities $g$ in Theorem 
\ref{thm:APrioriEstimatesInLpRelativelyCompactPerturbation}. The argument can be continued to higher order weighted Sobolev spaces.

\begin{corollary}[Exponential decay of $v_{\star}$ with higher regularity]\label{cor:NonlinearOrnsteinUhlenbeckSteadyStateMoreRegularity}
  Let the assumptions \eqref{cond:A4DC}, \eqref{cond:A5}--\eqref{cond:A8} and \eqref{cond:A10} be satisfied for $\K=\R$ and for some $1<p<\infty$. 
  Moreover, let $\amax=\rho(A)$ denote the spectral radius of $A$, $-\azero=s(-A)$ the spectral bound of $-A$ and $-\bzero=s(Df(v_{\infty}))$ 
  the spectral bound of $Df(v_{\infty})$. Further, let $\theta(x)=\exp\left(\mu\sqrt{|x|^2+1}\right)$ denote a weight function for $\mu\in\R$.
  Then, for every $0<\varepsilon<1$ there is a constant $K_1=K_1(A,f,v_{\infty},d,p,\varepsilon)>0$ with the following property: 
  Every classical solution $v_{\star}$ of
  \begin{align}
    \label{equ:NonlinearProblemRealFormulationMoreRegularity}
    A\triangle v(x)+\left\langle Sx,\nabla v(x)\right\rangle+f(v(x))=0,\,x\in\R^d,
  \end{align}
  with $v_{\star}\in C^3(\R^d,\R^N)$ and
  \begin{align}
    \label{equ:BoundednessConditionForVStarMoreRegularity}
    \sup_{|x|\geqslant R_0}\left|v_{\star}(x)-v_{\infty}\right|\leqslant K_1\text{ for some $R_0>0$}
  \end{align}
  satisfies
  \begin{align*}
    v_{\star}-v_{\infty}\in W^{2,p}_{\theta}(\R^d,\R^N)
  \end{align*}
  for every exponential decay rate
  \begin{align*}
    0\leqslant\mu\leqslant\varepsilon\frac{\sqrt{\azero\bzero}}{\amax p}.
  \end{align*}
  If additionally, $p\geqslant\frac{d}{2}$ and $f\in C^{k-1}(\R^N,\R^N)$, $v_{\star}\in C^{k+1}(\R^d,\R^N)$ for some $k\in\N$ with $k\geqslant 3$, 
  then $v_{\star}$ even satisfies $v_{\star}-v_{\infty}\in W^{k,p}_{\theta}(\R^d,\R^N)$.
\end{corollary}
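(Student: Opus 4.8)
The plan is to bootstrap from Theorem~\ref{thm:NonlinearOrnsteinUhlenbeckSteadyState}: it already delivers $w_{\star}:=v_{\star}-v_{\infty}\in W^{1,p}_{\theta}(\R^d,\R^N)$, and I would then repeatedly differentiate the nonlinear equation \eqref{equ:NonlinearProblemRealFormulationMoreRegularity} and re-apply Theorem~\ref{thm:APrioriEstimatesInLpRelativelyCompactPerturbation} to each derivative (this is the reason that theorem was stated with a nonzero inhomogeneity $g$). Throughout, set $B_{\infty}:=-Df(v_{\infty})$ and $\tilde Q(x):=Df(v_{\star}(x))-Df(v_{\infty})$. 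Since $v_{\star}\in C_{\mathrm{b}}(\R^d,\R^N)$ and $Df$ is continuous, $\tilde Q\in C_{\mathrm{b}}(\R^d,\R^{N,N})$, and a Taylor expansion gives $|\tilde Q(x)|\le\bigl(\sup_{z\in B_{K_1}(v_{\infty})}|D^2f(z)|\bigr)|v_{\star}(x)-v_{\infty}|$, so that—choosing $K_1$ as in \eqref{equ:thresholdconstant}, after halving its right-hand side if necessary—condition \eqref{equ:ConditionOnQ} holds for $\tilde Q$ with the $R_0$ from \eqref{equ:BoundednessConditionForVStarMoreRegularity}. The spectral hypotheses \eqref{cond:A8B}, \eqref{cond:A10B} of Theorem~\ref{thm:APrioriEstimatesInLpRelativelyCompactPerturbation} follow from \eqref{cond:A8}, \eqref{cond:A10}, while \eqref{cond:A4DC}, \eqref{cond:A5} hold by assumption.

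For the step to $W^{2,p}_{\theta}$ I would fix $\ell\in\{1,\dots,d\}$ and apply $D_{\ell}$ to \eqref{equ:NonlinearProblemRealFormulationMoreRegularity}. Using that $D_{\ell}$ commutes with $A\triangle$, that $D_{\ell}\langle Sx,\nabla v_{\star}\rangle=\langle Sx,\nabla(D_{\ell}w_{\star})\rangle+\sum_{i}S_{i\ell}D_{i}w_{\star}$, and the splitting $Df(v_{\star}(x))=-B_{\infty}+\tilde Q(x)$, one finds that $D_{\ell}w_{\star}\in C^2(\R^d,\R^N)$ solves
\begin{align*}
  \bigl[\L_{\tilde Q}(D_{\ell}w_{\star})\bigr](x)=-\sum_{i=1}^{d}S_{i\ell}D_{i}w_{\star}(x)=:g^{(\ell)}(x),\qquad x\in\R^d.
\end{align*}
Since $w_{\star}\in W^{1,p}_{\theta}$, the right-hand side $g^{(\ell)}$ lies in $L^p_{\theta}$; moreover $D_{\ell}w_{\star}\in W^{2,p}_{\mathrm{loc}}(\R^d,\R^N)\cap L^p_{\theta_1}(\R^d,\R^N)$ for a weight $\theta_1$ of negative rate $\mu_1<0$, because $v_{\star}\in C^3$ and $W^{1,p}_{\theta}\subseteq L^p_{\theta_1}$. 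Hence Theorem~\ref{thm:APrioriEstimatesInLpRelativelyCompactPerturbation} with $\lambda=0$, $\theta_2=\theta$ and $g=g^{(\ell)}$ yields $D_{\ell}w_{\star}\in W^{1,p}_{\theta}$ for every $\ell$, i.e. $w_{\star}\in W^{2,p}_{\theta}$.

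For the higher-order statement I would induct on $k\ge 3$, the case $k=2$ being the step just carried out. Assume $w_{\star}\in W^{k-1,p}_{\theta}$, $f\in C^{k-1}$, $v_{\star}\in C^{k+1}$, and fix a multi-index $\alpha$ with $|\alpha|=k-1$. Applying $D^{\alpha}$ to \eqref{equ:NonlinearProblemRealFormulationMoreRegularity}: $D^{\alpha}$ commutes with $A\triangle$; $D^{\alpha}\langle Sx,\nabla v_{\star}\rangle=\langle Sx,\nabla(D^{\alpha}w_{\star})\rangle+R_{\alpha}(x)$ with $R_{\alpha}$ a constant-coefficient linear combination of derivatives of $w_{\star}$ of order $\le k-1$; and, by the Faà di Bruno formula, $D^{\alpha}(f(v_{\star}))=Df(v_{\star})D^{\alpha}w_{\star}+\Phi_{\alpha}(x)$, where $\Phi_{\alpha}$ is a finite sum of terms $D^{m}f(v_{\star}(x))[D^{\gamma_{1}}w_{\star},\dots,D^{\gamma_{m}}w_{\star}]$ with $2\le m\le k-1$, $\gamma_{1}+\dots+\gamma_{m}=\alpha$, hence with $f$-derivatives only up to order $k-1$ and each $|\gamma_{j}|\le k-2$. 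Splitting $Df(v_{\star})=-B_{\infty}+\tilde Q$ as before, $D^{\alpha}w_{\star}\in C^2$ solves
\begin{align*}
  \bigl[\L_{\tilde Q}(D^{\alpha}w_{\star})\bigr](x)=-R_{\alpha}(x)-\Phi_{\alpha}(x)=:g^{(\alpha)}(x),\qquad x\in\R^d.
\end{align*}
Here $R_{\alpha}\in L^p_{\theta}$ by the inductive hypothesis; granting $\Phi_{\alpha}\in L^p_{\theta}$ (see below) and noting that $v_{\star}\in C^{k+1}$ gives $D^{\alpha}w_{\star}\in W^{2,p}_{\mathrm{loc}}\cap L^p_{\theta_1}$ while $\tilde Q$ satisfies \eqref{equ:ConditionOnQ}, Theorem~\ref{thm:APrioriEstimatesInLpRelativelyCompactPerturbation} yields $D^{\alpha}w_{\star}\in W^{1,p}_{\theta}$ for all $|\alpha|=k-1$, i.e. $w_{\star}\in W^{k,p}_{\theta}$, closing the induction.

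The main obstacle is exactly the estimate $\Phi_{\alpha}\in L^p_{\theta}$. The scalar factor $D^{m}f(v_{\star})$ is bounded because $v_{\star}$ is bounded and $D^mf$ is continuous; for the product of derivatives one writes $\theta=\prod_{j=1}^{m}\theta^{1/m}$, observes via \eqref{eq:sobolevweights} that $\theta^{1/m}D^{\gamma_{j}}w_{\star}\in W^{k-1-|\gamma_{j}|,p}(\R^d,\R^N)$ with $k-1-|\gamma_{j}|\ge 1$, embeds each factor by \eqref{eq:sobolevembed} into a suitable $L^{q_{j}}$ (with the endpoint/$L^{\infty}$ cases only helping), and multiplies via the generalized Hölder inequality \eqref{eq:hoeldergeneral}. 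A short bookkeeping of the exponents shows $\sum_{j}q_{j}^{-1}\le p^{-1}$ precisely when $p\ge d/(k-1)$, which for $k\ge 3$ is implied by the hypothesis $p\ge d/2$; this is the one place where that hypothesis is used. All remaining points—the commutator identities for $A\triangle$ and $\langle Sx,\nabla\cdot\rangle$, the boundedness and asymptotic smallness of $\tilde Q$, and the local regularity needed to differentiate the equation classically—are routine.
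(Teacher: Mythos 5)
Your proposal is correct and follows essentially the same route as the paper's proof: differentiate the equation, split $Df(v_{\star})=-B_{\infty}+Q$ with $Q$ asymptotically small, move the drift commutator and Faà di Bruno remainders to the inhomogeneity $g$, and apply Theorem~\ref{thm:APrioriEstimatesInLpRelativelyCompactPerturbation} to each derivative, with the same weighted Sobolev embedding plus generalized Hölder bookkeeping (exponents $q_j=\tfrac{k-1}{|\gamma_j|}p$) to place $\Phi_{\alpha}$ in $L^p_{\theta}$. The cosmetic differences (distributing $\theta=\prod_{j=1}^m\theta^{1/m}$ rather than using $\theta\le\prod_j\theta^{1/j}$, and noting that stage $k$ alone requires only $p\ge d/(k-1)$, with $p\ge d/2$ binding from the base case $k=3$) do not change the argument.
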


\begin{proof}
  Let $v_{\star}$ be a classical solution of \eqref{equ:NonlinearProblemRealFormulationMoreRegularity} satisfying \eqref{equ:BoundednessConditionForVStarMoreRegularity}. 
  Again this implies $v_{\star}\in C_{\mathrm{b}}(\R^d,\R^N)$.
  \begin{itemize}[leftmargin=0.43cm]\setlength{\itemsep}{0.1cm}
  \item[1.] The additional regularity $v_{\star}\in C^3(\R^d,\R^N)$ allows us to apply $D_i=\frac{\partial}{\partial x_i}$ to equation \eqref{equ:NonlinearProblemRealFormulationMoreRegularity} 
  \begin{align*}
    0 =&
     A\triangle D_i v_{\star}(x)+\left\langle Sx,\nabla D_i v_{\star}(x)\right\rangle+ Df(v_{\star}(x))D_i v_{\star}(x) +\sum_{j=1}^{d}S_{ji}D_j v_{\star},\,x\in\R^d.
  \end{align*}
  For $w_{\star}:=D_i v_{\star} \in C^2(\R^d,\R^N)$  we obtain using  $S_{ii}=0$,
  \begin{align*}
    0 = A\triangle w_{\star}(x)+\left\langle Sx,\nabla w_{\star}(x)\right\rangle+ Df(v_{\star}(x))w_{\star}(x) +\sum_{\substack{j=1\\j\neq i}}^{d}S_{ji}D_j v_{\star} 
      = \left[\L_Q w_{\star}\right](x) + g(x),\,x\in\R^d
  \end{align*}
  with the settings
  \begin{align}
    \label{equ:MatrixBandQMoreRegularity}
    B_{\infty} := -Df(v_{\infty}),\quad Q(x) := Df(v_{\star}(x))-Df(v_{\infty}),\quad g(x):=\sum_{\substack{j=1\\j\neq i}}^d S_{ji}D_j v_{\star}.
  \end{align}
  \item[2.] We now apply Theorem \ref{thm:APrioriEstimatesInLpRelativelyCompactPerturbation} with $B_{\infty},Q,g$ from \eqref{equ:MatrixBandQMoreRegularity}, 
  $\theta_2=\theta$, $\mu_2=\mu$, $\mu_1<0$ and $\lambda=0$.  The assumptions \eqref{cond:A4DC} and \eqref{cond:A5} are directly satisfied while \eqref{cond:A8B}, 
  \eqref{cond:A10B} follow from \eqref{cond:A8}, \eqref{cond:A10}. In the following let $0<\varepsilon<1$ be fixed and let $\theta_1,\theta_2\in C(\R^d,\R)$ be given 
  by \eqref{equ:WeightFunctionsLQ} satisfying $\mu_1<0\leqslant\mu_2$ and \eqref{equ:ExponentialRatesLQ}. 
  The relation $w_{\star}\in W^{2,p}_{\mathrm{loc}}(\R^d,\C^N)\cap L^p_{\theta_1}(\R^d,\C^N)$ is a consequence of Theorem \ref{thm:NonlinearOrnsteinUhlenbeckSteadyState} 
  which implies $D_i v_{\star}\in L^p_{\theta}(\R^d,\R^N)\subseteq L^p_{\theta_1}(\R^d,\R^N)$ since $\theta_1$ is decreasing. Moreover, $-\L_{Q} w_{\star}=g$ holds in 
  $L^p_{\mathrm{loc}}(\R^d,\C^N)$ by construction. Then, $Q\in L^{\infty}(\R^d,\C^{N,N})$ follows from \eqref{equ:MatrixBandQMoreRegularity} since $v_{\star}$ is bounded, 
  and we also have $g\in L^p_{\theta}(\R^d,\C^N)$ since Theorem \ref{thm:NonlinearOrnsteinUhlenbeckSteadyState} shows $v_{\star}\in W^{1,p}_{\theta}(\R^d,\R^N)$ which leads 
  to the estimate
  \begin{align*}
    \left\|g\right\|_{L^p_{\theta}} \leqslant \sum_{\substack{j=1\\j\neq i}}^d|S_{ji}|\left\|D_j v_{\star}\right\|_{L^p_{\theta}}\leqslant C.
  \end{align*}
  \item[3.] We finally verify condition \eqref{equ:ConditionOnQ}: Let us choose $K_1=K_1(A,f,v_{\infty},d,p,\varepsilon)>0$ such that  \eqref{equ:thresholdconstant} holds 
  with $\frac{K(\varepsilon)}{2}$ instead of $K(\varepsilon)$ on the right-hand side. Since $f\in C^2(\R^N,\R^N)$ by \eqref{cond:A6}, equations \eqref{equ:BoundednessConditionForVStar} 
  and \eqref{equ:thresholdconstant} imply for all $|x|\geqslant R_0$
  \begin{align*}
               \left|Q(x)\right|
            =& \left|Df(v_{\star}(x))-Df(v_{\infty})\right|
    \leqslant  \int_{0}^{1}\left|D^2 f(v_{\infty}+s\left(v_{\star}(x)-v_{\infty}\right)\right| ds \left|v_{\star}(x)-v_{\infty}\right| \\
      \leqslant& K_1\left(\sup_{z\in B_{K_1}(v_{\infty})}\left|D^2 f(z)\right|\right)
    \leqslant \frac{\varepsilon}{2}  \min\left\{\frac{\bzero}{\kappa\aone},\frac{\bzero}{C_{0,\varepsilon}},\beta_{\infty}\right\}.
  \end{align*}
  Taking the supremum over $|x|\geqslant R_0$ we have shown \eqref{equ:ConditionOnQ} with $R_0$ from \eqref{equ:BoundednessConditionForVStar}. By applying 
  Theorem \ref{thm:APrioriEstimatesInLpRelativelyCompactPerturbation} we obtain $w_{\star}=D_i v_{\star}\in W^{1,p}_{\theta}(\R^d,\R^N)$ for every $i=1,\ldots,d$, 
  thus $v_{\star}-v_{\infty}\in W^{2,p}_{\theta}(\R^d,\R^N)$. 
  \item[4.] For the final assertion we consider $f\in C^{k-1}(\R^N,\R^N)$ and $v_{\star}\in C^{k+1}(\R^d,\R^N)$ for some $k\in\N$ with $k\geqslant 3$ and show 
  $v_{\star}-v_{\infty}\in W^{k,p}_{\theta}(\R^d,\R^N)$ by induction with respect to $k$. Let $\alpha\in\N_0^d$ be a multi-index of length $|\alpha|=k-1$ for 
  some $k\geqslant 3$. Applying $D^{\alpha}$ to \eqref{equ:NonlinearProblemRealFormulationMoreRegularity} yields that $w_{\star}:=D^{\alpha}v_{\star}$ satisfies 
  \begin{align}
    \label{equ:LQGleichung}
    0=[\L_Q w_{\star}](x)+g(x),\,x\in\R^d
  \end{align}
  with $B_{\infty}$ and $Q(x)$ as in \eqref{equ:MatrixBandQMoreRegularity}, and $g(x):=g_1(x)+g_2(x)$ defined by
  \begin{align}
    \label{equ:g1g2}
    \begin{split}
    g_1(x) :=& \sum_{i=1}^{d} \sum_{\substack{j=1\\e_j\leqslant\alpha}}^{d} S_{ij} \binom{\alpha}{e_j} D^{\alpha-e_j+ei}v_{\star}(x), \\ 
    g_2(x) :=& \sum_{\ell=2}^{k-1}\sum_{\pi\in\mathcal{P}_{\ell,k-1}}(D^{\ell}f)(v_{\star}(x))\left[D^{|\pi_1|}v_{\star}(x)h_{\pi_1},\ldots,
                                                                                                                            D^{|\pi_{\ell}|}v_{\star}(x)h_{\pi_{\ell}}\right].
    \end{split}
  \end{align}
  The first term $g_1$ arises from the Leibniz rule applied to $\langle Sx,\nabla v_{\star}(x)\rangle$,
  \begin{equation}
    \label{equ:DalphaRotationalTerm}
    \begin{aligned}
       & D^{\alpha}(\left\langle Sx,\nabla v_{\star}(x)\right\rangle)
      =   \sum_{i=1}^{d}\sum_{j=1}^{d} S_{ij}D^{\alpha}(x_j D_i v_{\star}(x)) \\
      =& \left\langle Sx,\nabla D^{\alpha} v_{\star}(x)\right\rangle
         + \sum_{i=1}^{d}\sum_{\substack{j=1\\e_j\leqslant\alpha}}^{d} S_{ij} \binom{\alpha}{e_j} D^{\alpha-e_j+e_i}v_{\star}(x) 
      =  \left\langle Sx,\nabla D^{\alpha} v_{\star}(x)\right\rangle + g_1(x).
    \end{aligned}
  \end{equation}
  The second term is obtained by applying Fa\'{a} di Bruno's formula for multivariate calculus,
  \begin{equation}
    \label{equ:DalphaNonlinearity}
    \begin{aligned}
       & D^{\alpha}(f(v_{\star}(x)))
      =  \sum_{\ell=1}^{k-1}\sum_{\pi\in\mathcal{P}_{\ell,k-1}}(D^{\ell}f)(v_{\star}(x))\left[D^{|\pi_1|}v_{\star}(x)h_{\pi_1},\ldots,
                                                                          D^{|\pi_{\ell}|}v_{\star}(x)h_{\pi_{\ell}}\right] \\
      =& Df(v_{\star}(x))D^{\alpha}v_{\star}(x) 
         + \sum_{\ell=2}^{k-1}\sum_{\pi\in\mathcal{P}_{\ell,k-1}}(D^{\ell}f)(v_{\star}(x))\left[D^{|\pi_1|}v_{\star}(x)h_{\pi_1},\ldots,
                                                                          D^{|\pi_{\ell}|}v_{\star}(x)h_{\pi_{\ell}}\right] \\
      =& Df(v_{\star}(x))D^{\alpha}v_{\star}(x) + g_2(x),\,x\in\R^d.
    \end{aligned}
  \end{equation}
  Here $\mathcal{P}_{\ell,k-1}$ denotes the set of all $\ell$-partitions of the set $\langle k-1\rangle := \{1,\ldots,k-1\}$, given by
  \begin{align*}
    \mathcal{P}_{\ell,k-1}= \bigg\{\pi=\{\pi_1,\ldots,\pi_{\ell}\} \subset 2^{\langle k-1 \rangle}:\; 
    \bigcup_{j=1}^{\ell}\pi_j = \langle k-1 \rangle,\; \pi_i \cap \pi_j = \emptyset\; \forall i \neq j \bigg\}.
  \end{align*}
  Moreover, we introduced the multilinear argument
  \begin{align} \label{equ:multilineararg}
    (h_1,\ldots,h_{k-1}) = (\underbrace{e_1,\ldots,e_1}_{\alpha_1},\ldots,\underbrace{e_d,\ldots,e_d}_{\alpha_d}), \quad e_j=j\text{-th unit vector in }\R^d,
  \end{align}
  and the short-hand $h_{\rho}=(h_{\rho_1}, \ldots h_{\rho_\nu})$ for index sets $\rho = \{\rho_{1}, \ldots \rho_{\nu}\} \subseteq \langle k-1 \rangle$.
  For $\ell=1$ the only partition is $\pi_1=\{\langle k-1\rangle \}$ and $Df(v_{\star})[D^{k-1}v_{\star} h_{\pi_1}]$ agrees with $Df(v_{\star})D^{\alpha}v_{\star}$.
  Below we show $g\in L^p_{\theta}(\R^d,\R^N)$. Then Theorem \ref{thm:APrioriEstimatesInLpRelativelyCompactPerturbation} applies to \eqref{equ:LQGleichung} 
  and yields $w_{\star}=D^{\alpha}v_{\star}\in W^{1,p}_{\theta}(\R^d,\R^N)$ and thus our assertion $v_{\star}-v_{\infty}\in W^{k,p}_{\theta}(\R^d,\R^N)$.
  \item[5.] 
  To verify $g\in L^p_{\theta}(\R^d,\R^N)$, consider first $g_1$. By the first part of the Corollary (base case $k=3$) and by the induction hyperthesis 
  (induction step $k>3$) we have $v_{\star}-v_{\infty}\in W^{k-1,p}_{\theta}(\R^d,\R^N)$. The indices  $\gamma:=\alpha-e_j+e_i$ with $|\alpha|=k-1$ and 
  $e_j\leqslant\alpha$ satisfy $|\gamma|=k-1$, hence $D^{\alpha-e_j+e_i}v_{\star}\in L^p_{\theta}(\R^d,\R^N)$, and we deduce $g_1\in L^p_{\theta}(\R^d,\R^N)$.

  Next we consider $g_2$. We show $\theta g_2\in L^p(\R^d,\R^N)$ by using the generalized H\"older's inequality \eqref{eq:hoeldergeneral} with $p_j:=\frac{k-1}{|\pi_j|}p$ 
  and $u_j:=\theta\left|D^{|\pi_j|}v_{\star}h_{\pi_j}\right|$ for $j=1,\ldots,\ell$. Note that $\sum_{j=1}^{\ell}\frac{1}{p_j}=\frac{1}{p}$ since $\sum_{j=1}^{\ell}|\pi_j|=k-1$. 
  We obtain
  \begin{align*}
             & \left\|\theta(D^{\ell}f)(v_{\star})\left[D^{|\pi_1|}v_{\star}h_{\pi_1},\ldots,D^{|\pi_{\ell}|}v_{\star}h_{\pi_{\ell}}\right]\right\|_{L^p}
    \leqslant  \left\|(D^{\ell}f)(v_{\star})\right\|_{L^{\infty}} \Big\|\prod_{j=1}^{\ell}\theta^{\frac{1}{j}}\left|D^{|\pi_j|}v_{\star}h_{\pi_j}\right|\Big\|_{L^p} \\ 
    \leqslant& \left\|(D^{\ell}f)(v_{\star})\right\|_{L^{\infty}} \prod_{j=1}^{\ell}\left\|\theta^{\frac{1}{j}}D^{|\pi_j|}v_{\star}h_{\pi_j}\right\|_{L^{p_j}}
    \leqslant \left\|(D^{\ell}f)(v_{\star})\right\|_{L^{\infty}} \prod_{j=1}^{\ell}\left\|\theta D^{|\pi_j|}v_{\star}h_{\pi_j}\right\|_{L^{p_j}},
  \end{align*}
  where we used $\theta(x)\geqslant 1$ and  $\theta^{\frac{1}{j}}(x)\leqslant \theta(x)$. Note that $v_{\star}\in C_{\mathrm{b}}(\R^d,\R^N)$, 
  $f\in C^{k-1}(\R^N,\R^N)$ and $\ell\leqslant k-1$ imply the total derivative $(D^{\ell} f)(v_{\star})$ to be bounded on $\R^d$. It remains to 
  verify that $\theta D^{|\pi_j|}v_{\star}h_{\pi_j}\in L^{p_j}(\R^d,\R^N)$: From $v_{\star}-v_{\infty}\in W^{k-1,p}_{\theta}(\R^d,\R^N)$ and 
  \eqref{eq:sobolevweights} we infer $\theta D^{\gamma}(v_{\star}-v_{\infty})\in W^{k-1-|\gamma|,p}(\R^d,\R^N)$ for all $0\leqslant|\gamma|\leqslant k-1$, 
  Using $1\leqslant |\pi_j|\leqslant k-2$ this proves the first assertion in
  \begin{align*}
    \theta D^{|\pi_j|}v_{\star}h_{\pi_j}\in W^{k-1-|\pi_j|,p}(\R^d,\R^N)\subset L^{p_j}(\R^d,\R^N).
  \end{align*}
  The second assertion '$\subset$' follows from the Sobolev embedding \eqref{eq:sobolevembed}, provided that $1<p<p_j<\infty$ and $\frac{d}{p}-(k-1-|\pi_j|)\leqslant\frac{d}{p_j}$. 
  The first inequality is implied by $1\leqslant |\pi_j|\leqslant k-2$,
  \begin{align*}
    1< p < \frac{k-1}{k-2}p   \leqslant  \frac{k-1}{|\pi_j|}p= p_j < \infty,
  \end{align*}
  while the second is implied by our assumptions $p\geqslant \frac{d}{2}$ and $k\geqslant 3$,
  \begin{align*}
    \frac{d}{p} - \frac{d}{p_j}
    = \frac{d}{p}\big(1-\frac{|\pi_j|}{k-1}\big) \leqslant 2 \big(1- \frac{|\pi_j|}{k-1}\big)
    \leqslant (k-1)\big(1-\frac{|\pi_j|}{k-1}\big) = k-1 - |\pi_j|.
  \end{align*}
\end{itemize}
\end{proof}

\begin{remark}[Higher regularity of the profile $v_{\star}$]\label{rem:HigherRegularity}
  Collecting the results of Theorem \ref{thm:NonlinearOrnsteinUhlenbeckSteadyState} and Corollary \ref{cor:NonlinearOrnsteinUhlenbeckSteadyStateMoreRegularity}, 
  we obtain
  \begin{align*}
    f\in C^{\max\{2,k-1\}}(\R^N,\R^N),\;v_{\star}\in C^{k+1}(\R^d,\R^N)\quad\Longrightarrow\quad v_{\star}-v_{\infty}\in W^{k,p}_{\theta}(\R^d,\R^N)
  \end{align*} 
  for any $k\in\N$, provided that $1<p<\infty$ from \eqref{cond:A4DC} satisfies $p\geqslant\frac{d}{2}$ if $k\geqslant 3$.
\end{remark}

\subsection{Pointwise exponential decay}
\label{subsec:4.3}

For the pointwise estimates we use the embedding in $L^{\infty}$. The particular choice $q=\infty$ and $l=0$ in \eqref{eq:sobolevweights} leads to
\begin{align}
  \label{equ:SobolevEmbedding}
  W^{k,p}(\R^d)\subseteq L^{\infty}(\R^d),\quad k\geqslant 0,\;1<p<\infty,\;d<kp
\end{align}
and to the inequality
\begin{align}
  \label{equ:SobolevInequality}
  \left\|u\right\|_{L^{\infty}(\R^d)}\leqslant C_{p,\infty,k,0}\left\|u\right\|_{W^{k,p}(\R^d)}\;\forall\,u\in W^{k,p}(\R^d).
\end{align}

\begin{corollary}[Pointwise exponentially decaying estimates]\label{cor:pointwise}
  Let the assumptions of Corollary \ref{cor:NonlinearOrnsteinUhlenbeckSteadyStateMoreRegularity} be satisfied. Moreover, let 
  $f\in C^{\max\{2,k-1\}}(\R^N,\R^N)$, $v_{\star}\in C^{k+1}(\R^d,\R^N)$ for some $k\in\N$ and let $p\geqslant\frac{d}{2}$ if $k\geqslant 3$.
  Then $v_{\star}-v_{\infty}\in W^{k,p}_{\theta}(\R^d,\R^N)$ satisfies the following estimate
  \begin{align}
    \label{equ:PointwiseEstimateVstar}
    \left|D^{\alpha}\left(v_{\star}(x)-v_{\infty}\right)\right| \leqslant C\exp\left(-\mu\sqrt{|x|^2+1}\right)
    \quad\forall\,x\in\R^d
  \end{align}
  for every exponential decay rate $0\leqslant\mu\leqslant\varepsilon\frac{\sqrt{\azero\bzero}}{\amax p}$ and for every multi-index $\alpha\in\N_0^d$ 
  satisfying $d<(k-|\alpha|)p$.
\end{corollary}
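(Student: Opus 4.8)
The plan is to combine the weighted Sobolev regularity of $v_{\star}-v_{\infty}$ already obtained with a Sobolev embedding into $L^{\infty}$, and then to rewrite the resulting $L^{\infty}$-bound for the weighted function as a pointwise exponentially decaying bound for the unweighted one.

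First I would invoke Remark \ref{rem:HigherRegularity} (equivalently, Theorem \ref{thm:NonlinearOrnsteinUhlenbeckSteadyState} together with Corollary \ref{cor:NonlinearOrnsteinUhlenbeckSteadyStateMoreRegularity}): since $f\in C^{\max\{2,k-1\}}(\R^d,\R^N)$, $v_{\star}\in C^{k+1}(\R^d,\R^N)$, and $p\geqslant\frac{d}{2}$ whenever $k\geqslant 3$, we have $v_{\star}-v_{\infty}\in W^{k,p}_{\theta}(\R^d,\R^N)$ for every decay rate $0\leqslant\mu\leqslant\varepsilon\frac{\sqrt{\azero\bzero}}{\amax p}$, with $\theta(x)=\exp(\mu\sqrt{|x|^2+1})$. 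Then by \eqref{eq:sobolevweights} --- which relies on $\theta$ being smooth with uniformly bounded logarithmic derivatives, $\|D^{\gamma}\theta/\theta\|_{L^{\infty}}<\infty$ for all $\gamma\in\N_0^d$ --- it follows that $\theta D^{\alpha}(v_{\star}-v_{\infty})\in W^{k-|\alpha|,p}(\R^d,\R^N)$ for every multi-index $\alpha$ with $|\alpha|\leqslant k$.

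Next, for any $\alpha$ satisfying $d<(k-|\alpha|)p$ (which in particular forces $k-|\alpha|\geqslant 1$ since $d\geqslant 2$, so $|\alpha|\leqslant k-1$), the embedding \eqref{equ:SobolevEmbedding} applied with $k-|\alpha|$ in place of $k$ yields $\theta D^{\alpha}(v_{\star}-v_{\infty})\in L^{\infty}(\R^d,\R^N)$, together with the quantitative bound $\|\theta D^{\alpha}(v_{\star}-v_{\infty})\|_{L^{\infty}}\leqslant C_{p,\infty,k-|\alpha|,0}\|\theta D^{\alpha}(v_{\star}-v_{\infty})\|_{W^{k-|\alpha|,p}}=:C$ from \eqref{equ:SobolevInequality}, where $C$ depends only on the data through the constants entering Corollary \ref{cor:NonlinearOrnsteinUhlenbeckSteadyStateMoreRegularity} and the embedding constant. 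Since $\theta(x)\geqslant 1>0$, this gives $|D^{\alpha}(v_{\star}(x)-v_{\infty})|=\theta(x)^{-1}|\theta(x)D^{\alpha}(v_{\star}(x)-v_{\infty})|\leqslant C\exp(-\mu\sqrt{|x|^2+1})$ for almost every $x\in\R^d$. Finally, because $v_{\star}\in C^{k+1}(\R^d,\R^N)$ and $|\alpha|\leqslant k$, the function $D^{\alpha}(v_{\star}-v_{\infty})$ is continuous on $\R^d$, so the almost-everywhere inequality upgrades to the estimate \eqref{equ:PointwiseEstimateVstar} valid for every $x\in\R^d$.

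There is no serious obstacle here; the corollary is a clean consequence of the earlier machinery. The only points that deserve a little care are verifying that the exponent condition $d<(k-|\alpha|)p$ is exactly what is needed to embed $W^{k-|\alpha|,p}$ into $L^{\infty}$ (and that it is consistent with $|\alpha|\leqslant k$), and observing that the pointwise-almost-everywhere estimate becomes a genuine everywhere estimate precisely because the relevant derivative of $v_{\star}$ is continuous under the assumed regularity $v_{\star}\in C^{k+1}$.
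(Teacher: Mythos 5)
Your proposal is correct and follows exactly the route the paper intends: apply \eqref{eq:sobolevweights} to pass from $v_{\star}-v_{\infty}\in W^{k,p}_{\theta}(\R^d,\R^N)$ to $\theta D^{\alpha}(v_{\star}-v_{\infty})\in W^{k-|\alpha|,p}(\R^d,\R^N)$, then use the embedding \eqref{equ:SobolevEmbedding} under the condition $d<(k-|\alpha|)p$ and divide by $\theta\geqslant 1$. The paper's proof is just the one-line version of this argument; your additional remarks on the consistency of the exponent condition and on upgrading the almost-everywhere bound to an everywhere bound via continuity of $D^{\alpha}v_{\star}$ are correct refinements, not a different method.
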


\begin{proof}
  The proof follows directly from \eqref{eq:sobolevweights} and the Sobolev embedding \eqref{equ:SobolevEmbedding}.
\end{proof}

\begin{remark}
  In case of $d\in\{2,3\}$ and $p=2$ it is sufficient to choose $k=4$ to obtain pointwise estimates for $D^{\alpha}v_{\star}$ of order 
  $0\leqslant|\alpha|\leqslant 2$. This requires to assume $f\in C^{3}(\R^N,\R^N)$ and $v_{\star}\in C^{5}(\R^d,\R^N)$. Note that the 
  authors of \cite{BeynLorenz2008}  consider the case $d=p=2$ and assume $f\in C^4(\R^N,\R^N)$ for their stability analysis of rotating 
  patterns. Our results show that $f\in C^3(\R^N,\R^N)$ is sufficient to guarantee \cite[Assumption 1]{BeynLorenz2008} which, therefore, 
  can be omitted. 
\end{remark}

\subsection{Application to complex-valued systems}
\label{subsec:4.4}

The next corollary extends the results from Theorem \ref{thm:NonlinearOrnsteinUhlenbeckSteadyState}, Corollary \ref{cor:NonlinearOrnsteinUhlenbeckSteadyStateMoreRegularity} 
and Corollary \ref{cor:pointwise} to complex-valued systems of type \eqref{equ:complexversion} which appear in several applications. 

\begin{corollary}[Exponential decay of $v_{\star}$ for $\K=\C$]\label{cor:NonlinearOrnsteinUhlenbeckSteadyStateComplexVersion}
  Let the assumptions \eqref{cond:A4DC} and \eqref{cond:A5} be satisfied for $\K=\C$ and for some $1<p<\infty$. Moreover, let $g:\R\rightarrow\C^{N,N}$ satisfy the following 
  properties
  \begin{flalign}
    &g\in C^2(\R,\C^{N,N}),                                                                                           \tag{A7$_g$}\label{cond:A6g} &\\
    &\text{$A,g(0)\in\C^{N,N}$ are simultaneously diagonalizable (over $\C$),}                                        \tag{A9$_g$}\label{cond:A8g} &\\
    &\Re\left\langle w,-g(0)w\right\rangle\geqslant\beta_{-g(0)}\;\forall\,w\in\C^N,\,|w|=1\text{ for some $\beta_{\infty}:=\beta_{-g(0)}>0$} \tag{A11$_g$}\label{cond:A10g} &
  \end{flalign}
  and define
  \begin{align}
    \label{equ:FormOfNonlinearityInComplexCase}
    f:\C^N\rightarrow\C^N,\quad f(u)=g\left(|u|^2\right)u.
  \end{align} 
  Further, let $\amax=\rho(A)$ denote the spectral radius of $A$, $-\azero=s(-A)$ the spectral bound of $-A$, $-\bzero=s(g(0))$ the spectral bound of $g(0)$ and 
  $\theta(x)=\exp\left(\mu\sqrt{|x|^2+1}\right)$ a weight function with $\mu\in\R$. Then, for every $0<\varepsilon<1$ there is a constant $K_1=K_1(A,g,d,p,\varepsilon)>0$ 
  with the following property: Every classical solution $v_{\star}\in C^k(\R^d,\C^N)$ of
  \begin{align}
    \label{equ:NonlinearProblemComplexFormulation}
    A\triangle v(x)+\left\langle Sx,\nabla v(x)\right\rangle+f(v(x))=0,\,x\in\R^d,
  \end{align}
  such that $g\in C^{\max\{2,k-1\}}(\R,\C^{N,N})$ for some $k\in\N$, $p\geqslant\frac{d}{2}$ if $k\geqslant 3$, and
  \begin{align}
    \label{equ:BoundednessConditionForVStarComplex}
    \sup_{|x|\geqslant R_0}\left|v_{\star}(x)\right|\leqslant K_1\text{ for some $R_0>0$,}
  \end{align}
  satisfies
  \begin{align*}
    v_{\star}\in W^{k,p}_{\theta}(\R^d,\C^N)
  \end{align*}
  for every exponential decay rate
  \begin{align*}
    0\leqslant\mu\leqslant\varepsilon\frac{\sqrt{\azero\bzero}}{\amax p}.
  \end{align*} 
  Moreover, $v_{\star}$ satisfies the following pointwise estimate
  \begin{align*}
    \left|D^{\alpha}v_{\star}(x)\right| \leqslant C\exp\left(-\mu\sqrt{|x|^2+1}\right)
    \quad\forall\,x\in\R^d
  \end{align*}
  for every exponential decay rate $0\leqslant\mu\leqslant\varepsilon\frac{\sqrt{\azero\bzero}}{\amax p}$ and for every multi-index $\alpha\in\N_0^d$ 
  satisfying $d<(k-|\alpha|)p$.
\end{corollary}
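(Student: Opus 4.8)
The plan is to reduce the complex-valued problem to the real-valued results of Theorem \ref{thm:NonlinearOrnsteinUhlenbeckSteadyState}, Corollary \ref{cor:NonlinearOrnsteinUhlenbeckSteadyStateMoreRegularity} and Corollary \ref{cor:pointwise} by \emph{realification}. I identify $\C^N$ with $\R^{2N}$ via $u=a+ib\mapsto(a,b)$, write $\hat M\in\R^{2N,2N}$ for the real matrix representing multiplication by $M\in\C^{N,N}$, and $\hat v\colon\R^d\to\R^{2N}$ for the realification of $v\colon\R^d\to\C^N$. Since $S$ is real, the drift operator $v\mapsto\langle Sx,\nabla v\rangle$ acts identically on real and imaginary parts, so $v_{\star}\in C^{k}(\R^d,\C^N)$ is a classical solution of \eqref{equ:NonlinearProblemComplexFormulation} if and only if $\hat v_{\star}\in C^{k}(\R^d,\R^{2N})$ is a classical solution of $\hat A\triangle\hat v+\langle Sx,\nabla\hat v\rangle+\hat f(\hat v)=0$ on $\R^d$, where $\hat f\colon\R^{2N}\to\R^{2N}$ realifies $f(u)=g(|u|^2)u$. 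As this identification is an isometry commuting with every $D^{\alpha}$, weighted $L^p$-norms and pointwise moduli transfer exactly, so it suffices to prove the assertions for $\hat v_{\star}$ with asymptotic state $\hat v_{\infty}:=0$.

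Next I would check that $(\hat A,\hat f)$ and $\hat v_{\infty}=0$ satisfy the hypotheses of the real-valued results. From \eqref{cond:A6g} and the fact that $u\mapsto|u|^2$ is a polynomial in the $2N$ real coordinates one gets $\hat f\in C^{\max\{2,k-1\}}(\R^{2N},\R^{2N})$, that is \eqref{cond:A6}. Since $\nabla_u(|u|^2)$ vanishes at $u=0$, the real Jacobian equals $D\hat f(0)=\widehat{g(0)}$ and $\hat f(0)=0$, giving \eqref{cond:A7} with $\hat v_{\infty}=0$. For any $M\in\C^{N,N}$ the complexification of $\hat M$ is similar to $\diag(M,\overline M)$ (its $\pm i$-eigenspaces under the complex structure), hence $\sigma(\hat M)=\sigma(M)\cup\overline{\sigma(M)}$; applied to $M=A$ and $M=g(0)$ this gives $\rho(\hat A)=\rho(A)=\amax$, $s(-\hat A)=s(-A)=-\azero$ and $s(\widehat{g(0)})=s(g(0))=-\bzero$, so the spectral data and the admissible rates $0\le\mu\le\varepsilon\sqrt{\azero\bzero}/(\amax p)$ are unchanged. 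The identity $\langle\hat w,\hat M\hat w\rangle_{\R^{2N}}=\Re\langle w,Mw\rangle_{\C^N}$ turns \eqref{cond:A10g} into \eqref{cond:A10} for $-\widehat{g(0)}$ with the same $\beta_{\infty}$, and --- using the equivalent antieigenvalue form \eqref{cond:A4}, together with $\mu_1(\hat A)=\mu_1(A)$ and the fact that $\hat A$ is invertible iff $A$ is --- turns \eqref{cond:A4DC} for $A$ into \eqref{cond:A4DC} for $\hat A$. Finally \eqref{cond:A8} for $(\hat A,\widehat{g(0)})$ follows from \eqref{cond:A8g}: a common $\C$-diagonalizer $Y$ of $A,g(0)$ yields the common diagonalizer $\diag(Y,\overline Y)$ of $\diag(A,\overline A)$ and $\diag(g(0),\overline{g(0)})$, and the complex structure realizing both similarities is the same. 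Condition \eqref{cond:A5} is unchanged.

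Having verified the hypotheses, I would conclude by invoking the real-valued results. The boundedness assumption \eqref{equ:BoundednessConditionForVStarComplex} reads exactly as $\sup_{|x|\ge R_0}|\hat v_{\star}(x)-\hat v_{\infty}|\le K_1$, and the real threshold $K_1=K_1(\hat A,\hat f,0,d,p,\varepsilon)$ depends only on $(A,g,d,p,\varepsilon)$. Theorem \ref{thm:NonlinearOrnsteinUhlenbeckSteadyState} then gives $\hat v_{\star}=\hat v_{\star}-\hat v_{\infty}\in W^{1,p}_{\theta}(\R^d,\R^{2N})$; feeding $\hat v_{\star}$ into Corollary \ref{cor:NonlinearOrnsteinUhlenbeckSteadyStateMoreRegularity} and Remark \ref{rem:HigherRegularity} upgrades this to $\hat v_{\star}\in W^{k,p}_{\theta}(\R^d,\R^{2N})$ under the imposed restriction $p\ge d/2$ for $k\ge 3$; and Corollary \ref{cor:pointwise} yields $|D^{\alpha}\hat v_{\star}(x)|\le C\exp(-\mu\sqrt{|x|^2+1})$ whenever $d<(k-|\alpha|)p$. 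Undoing the identification $\R^{2N}\cong\C^N$ returns $v_{\star}\in W^{k,p}_{\theta}(\R^d,\C^N)$ together with the pointwise estimate for $D^{\alpha}v_{\star}$.

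The step that needs the most care is the transfer of the $L^p$-dissipativity condition \eqref{cond:A4DC} from $A\in\C^{N,N}$ to its realification $\hat A\in\R^{2N,2N}$; handled through the antieigenvalue characterization \eqref{cond:A4}, it rests on the elementary --- but not to be skipped --- identity $\langle\hat w,\hat A\hat w\rangle_{\R^{2N}}=\Re\langle w,Aw\rangle_{\C^N}$ and the corresponding equalities of norms, which give $\mu_1(\hat A)=\mu_1(A)$. A minor bookkeeping point is that one must match the regularity of $\hat v_{\star}$ inherited from $v_{\star}$ against the derivative count used in Corollary \ref{cor:NonlinearOrnsteinUhlenbeckSteadyStateMoreRegularity} and Remark \ref{rem:HigherRegularity}, applying the higher-regularity statement at the appropriate index.
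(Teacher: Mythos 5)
Your proposal is correct and follows essentially the same route as the paper: realify $\C^N\cong\R^{2N}$ (your $\hat M$ is exactly the paper's block matrix $\mathbf{A}=\bigl(\begin{smallmatrix}A_1&-A_2\\A_2&A_1\end{smallmatrix}\bigr)$), verify that the assumptions \eqref{cond:A4DC}, \eqref{cond:A5}--\eqref{cond:A8}, \eqref{cond:A10} for $\K=\R$ transfer with unchanged spectral and coercivity constants, and then invoke Theorem \ref{thm:NonlinearOrnsteinUhlenbeckSteadyState}, Corollary \ref{cor:NonlinearOrnsteinUhlenbeckSteadyStateMoreRegularity}/Remark \ref{rem:HigherRegularity} and Corollary \ref{cor:pointwise}. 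The only cosmetic difference is that you transfer \eqref{cond:A4DC} via the antieigenvalue form \eqref{cond:A4} and the identity $\mu_1(\hat A)=\mu_1(A)$, whereas the paper checks the $L^p$-dissipativity inequality directly from $\Re\langle w,Mw\rangle_{\C^N}=\langle\hat w,\hat M\hat w\rangle_{\R^{2N}}$ and $|w|=|\hat w|$, $|Mw|=|\hat M\hat w|$; both verifications are equivalent.
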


\begin{proof}

  We transform the $N$-dimensional complex-valued system \eqref{equ:NonlinearProblemComplexFormulation} into the $2N$-dimensional real-valued system
  \begin{align}
    \mathbf{A}\triangle \mathbf{v}(x) + \left\langle Sx,\nabla \mathbf{v}(x)\right\rangle + \mathbf{f}(\mathbf{v}(x)) = 0,\,x\in\R^d,
    \label{equ:NonlinearProblemTransformedRealFormulation}
  \end{align}
  For this purpose, we decompose $A=A_1+iA_2$ with $A_1,A_2\in\R^{N,N}$, $v=v_1+iv_2$ with $v_1,v_2:\R^d\rightarrow\R^N$, $f_1,f_2:\R^{2N}\rightarrow\R^N$ 
  with $f_1(u_1,u_2)=\Re f(u_1+iu_2)$, $f_2(u_1,u_2)=\Im f(u_1+iu_2)$, $g=g_1+ig_2$ with $g_1,g_2:\R\rightarrow\R^{N,N}$. Moreover, we define 
  $\mathbf{A}\in\R^{2N,2N}$, $\mathbf{v}\in\R^{2N}$ and $\mathbf{f}:\R^{2N}\rightarrow\R^{2N}$ by
  \begin{align*}
    \mathbf{A}:=\begin{pmatrix}A_1 &-A_2\\ A_2 &A_1\end{pmatrix},\quad \mathbf{v}:=\begin{pmatrix}v_1\\v_2\end{pmatrix},\quad
    \mathbf{f}(\mathbf{v}):=\begin{pmatrix}f_1(\mathbf{v})\\f_2(\mathbf{v})\end{pmatrix}=\begin{pmatrix}g_1(|\mathbf{v}|^2) &-g_2(|\mathbf{v}|^2)\\g_2(|\mathbf{v}|^2)&g_1(|\mathbf{v}|^2)\end{pmatrix}\mathbf{v}.
  \end{align*} 
  Let us apply Theorem \ref{thm:NonlinearOrnsteinUhlenbeckSteadyState} to the $2N$-dimensional problem \eqref{equ:NonlinearProblemTransformedRealFormulation} 
  and check its assumptions.  First, we collect the following relations of $A$ and $\mathbf{A}$:
  \begin{align}
    &\lambda\in\sigma(A) \quad\Longleftrightarrow\quad \lambda,\overline{\lambda}\in\sigma(\mathbf{A}),  
     \label{equ:EigenvalueRelation} \\
    &Y^{-1}AY=\Lambda_A\quad\Longleftrightarrow\quad\begin{pmatrix}iY & \overline{Y}\\ Y &-\overline{iY}\end{pmatrix}^{-1}\mathbf{A}\begin{pmatrix}iY & \overline{Y}\\ Y &-\overline{iY}\end{pmatrix} =\begin{pmatrix}\Lambda_A &0\\ 0 &\overline{\Lambda_A}\end{pmatrix}, 
     \label{equ:DiagonalizabilityRelation} \\
    &\Re\left\langle v,Av\right\rangle = \left\langle \mathbf{v},\mathbf{A}\mathbf{v}\right\rangle,\;\left|v\right|=\left|\mathbf{v}\right|,\;\left|Av\right|=\left|\mathbf{A}\mathbf{v}\right|.
    \label{equ:NormRelation}
  \end{align}
  Since $A$ satisfies \eqref{cond:A4DC} for some $1<p<\infty$ and $\K=\C$, we deduce from \eqref{equ:NormRelation}, that $\mathbf{A}$ satisfies \eqref{cond:A4DC} 
  for the same $1<p<\infty$ and $\K=\R$. In particular, we have $\gamma_A=\gamma_{\mathbf{A}}$ in \eqref{cond:A4DC}. Note that if $A$ satisfies \eqref{cond:A1}, 
  \eqref{cond:A2}, \eqref{cond:A3} for $\K=\C$ then $\mathbf{A}$ satisfies \eqref{cond:A1}, \eqref{cond:A2}, \eqref{cond:A3} for $\K=\R$, as follows from 
  \eqref{equ:DiagonalizabilityRelation}, \eqref{equ:EigenvalueRelation}, \eqref{equ:NormRelation}. Assumption \eqref{cond:A5} is not affected by the transformation. 
  From \eqref{cond:A6g} we deduce that $\mathbf{f}\in C^2(\R^{2N},\R^{2N})$, so that assumption \eqref{cond:A6} is satisfied for $\K=\R$. Obviously, $\mathbf{f}(v_{\infty})=0$ 
  holds for $v_{\infty}=0\in\R^{2N}$, so that condition \eqref{cond:A7} is satisfied. Since $A$ and $g(0)$ are simultaneously diagonalizable (over $\C$), cf. 
  \eqref{cond:A8g}, we deduce from \eqref{equ:DiagonalizabilityRelation} that $\mathbf{A}$ and
  \begin{align*}
    D\mathbf{f}(0) = \begin{pmatrix}g_1(0) &-g_2(0)\\g_2(0) & g_1(0)\end{pmatrix}
  \end{align*}
  are simultaneously diagonalizable (over $\C$). This proves assumption \eqref{cond:A8} for $\K=\R$. Finally, \eqref{cond:A10g} implies \eqref{cond:A10} with 
  $\beta_{\infty}=\beta_{-g(0)}$. Every classical solution $v_{\star}$ of \eqref{equ:NonlinearProblemComplexFormulation} satisfying $v_{\star}\in C_{\mathrm{b}}(\R^d,\C^N)$ 
  and \eqref{equ:BoundednessConditionForVStarComplex} leads to a classical solution
  \begin{align*}
    \mathbf{v}_{\star}:=\begin{pmatrix}\Re v_{\star}\\ \Im v_{\star}\end{pmatrix}
  \end{align*}
  of \eqref{equ:NonlinearProblemTransformedRealFormulation} satisfying $\mathbf{v}_{\star}\in C_{\mathrm{b}}(\R^d,\R^{2N})$ and \eqref{equ:BoundednessConditionForVStarComplex}. 

  Summarizing, Theorem \ref{thm:NonlinearOrnsteinUhlenbeckSteadyState} yields $\mathbf{v}_{\star}\in W^{1,p}_{\theta}(\R^d,\R^{2N})$, and thus $v_{\star}\in W^{1,p}_{\theta}(\R^d,\C^N)$.
\end{proof}

In Section \ref{sec:6} we will apply this result to the cubic-quintic complex Ginzburg-Landau equation which is of the form \eqref{equ:complexversion}. 
Other examples fitting into this class include the Schr\"odinger and the Gross-Pitaevskii equation.

%
%
\sect{Exponential decay of eigenfunctions}
\label{sec:5}

Consider the eigenvalue problem
\begin{align}
  \label{equ:EigenvalueProb}
  A\triangle v(x) + \left\langle Sx,\nabla v(x)\right\rangle + Df(v_{\star}(x))v(x) = \lambda v(x),\,x\in\R^d,\,d\geqslant 2.
\end{align}

We are interested in \begriff{classical solutions of \eqref{equ:EigenvalueProb}}, i.e. $\lambda\in\C$ and $v\in C^2(\R^d,\C^N)$ solves \eqref{equ:EigenvalueProb} 
pointwise (cf. Definition \ref{def:ClassicalSolution}).

\subsection{Sobolev and pointwise estimates of eigenfunctions}
\label{subsec:5.1}

The following theorem states that every classical solution $v$ of the eigenvalue problem \eqref{equ:EigenvalueProb} decays exponentially in space, provided its 
associated (isolated) eigenvalue $\lambda$ satisfies $\Re\lambda>-\beta_{\infty}$. The proof is similar to those of Theorem \ref{thm:NonlinearOrnsteinUhlenbeckSteadyState} 
and Corollary \ref{cor:NonlinearOrnsteinUhlenbeckSteadyStateMoreRegularity}, but now it is crucial that Theorem \ref{thm:APrioriEstimatesInLpRelativelyCompactPerturbation} 
can be employed for cases where $\lambda\neq 0$, $g\neq 0$ and $\K=\C$.

\begin{theorem}[Exponential decay of eigenfunctions]\label{thm:LinearizedOrnsteinUhlenbeckExponentialDecayInLp}
  \enum{1} Let the assumptions \eqref{cond:A4DC}, \eqref{cond:A5}--\eqref{cond:A8} and \eqref{cond:A10} be satisfied for $\K=\R$ and for some $1<p<\infty$. 
  Moreover, let $\amax=\rho(A)$ denote the spectral radius of $A$, $-\azero=s(-A)$ the spectral bound of $-A$, $-\bzero=s(Df(v_{\infty}))$ the spectral bound of $Df(v_{\infty})$ 
  and let $\beta_{\infty}$ be from \eqref{cond:A10}. Further, let 
  \begin{align*}
    \theta_j(x)=\exp\left(\mu_j\sqrt{|x|^2+1}\right),\quad x\in\R^d,\quad j=1,2 
  \end{align*}
  denote a weight function for $\mu_1,\mu_2\in\R$. Then, for every $0<\varepsilon<1$ there is a constant $K_1=K_1(A,f,v_{\infty},d,p,\varepsilon)>0$ such that 
  for every classical solution $v_{\star}$ of \eqref{equ:NonlinearProblemRealFormulation} satisfying \eqref{equ:BoundednessConditionForVStar} the following property holds: 
  Every classical solution $v$ of the eigenvalue problem 
  \begin{align}
    \label{equ:NonlinearEigenvalueProblemRealFormulation}
    A\triangle v(x)+\left\langle Sx,\nabla v(x)\right\rangle+Df(v_{\star}(x))v(x)=\lambda v(x),\,x\in\R^d,
  \end{align}
  with $\lambda\in\C$ and $\Re\lambda\geqslant -(1-\varepsilon)\beta_{\infty}$, such that $v\in L^p_{\theta_1}(\R^d,\C^N)$ for some exponential growth rate
  \begin{align}
    \label{equ:ratebound3}
    -\sqrt{\varepsilon\frac{\gamma_A\beta_{\infty}}{2d|A|^2}}\leqslant\mu_1\leqslant 0,
  \end{align}
  satisfies
  \begin{align*}
    v\in W^{1,p}_{\theta_2}(\R^d,\C^N)
  \end{align*}
  for every exponential decay rate
  \begin{align} 
    \label{equ:ratebound2}
    0\leqslant\mu_2\leqslant\varepsilon\frac{\sqrt{\azero\bzero}}{\amax p}.
  \end{align} 
  \enum{2} If additionally, $p\geqslant\frac{d}{2}$, $f\in C^{k}(\R^N,\R^N)$, $v_{\star}\in C^{k+1}(\R^d,\R^N)$ and $v\in C^{k+1}(\R^d,\C^N)$ for some $k\in\N$ with 
  $k\geqslant 2$, then $v\in W^{k,p}_{\theta}(\R^d,\C^N)$ holds. Moreover, $v$ satisfies the pointwise estimate
    \begin{align} \label{equ:ratederiv}
    \left|D^{\alpha}v(x)\right| \leqslant C\exp\left(-\mu_2\sqrt{|x|^2+1}\right),
    \quad x\in\R^d
  \end{align}
  for every exponential decay rate $0\leqslant\mu_2\leqslant\varepsilon\frac{\sqrt{\azero\bzero}}{\amax p}$ and for every multi-index $\alpha\in\N_0^d$ satisfying 
  $d<(k-|\alpha|)p$.
\end{theorem}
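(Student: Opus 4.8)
The plan is to rerun the arguments of Theorem~\ref{thm:NonlinearOrnsteinUhlenbeckSteadyState} and Corollary~\ref{cor:NonlinearOrnsteinUhlenbeckSteadyStateMoreRegularity} essentially verbatim, the only new feature being that the core a-priori estimate Theorem~\ref{thm:APrioriEstimatesInLpRelativelyCompactPerturbation} is now invoked with $\lambda\neq 0$, $g\neq 0$ and $\K=\C$ — which is precisely the generality in which that theorem was stated. For part~(1), write $Df(v_{\star}(x))=-B_{\infty}+Q(x)$ with $B_{\infty}:=-Df(v_{\infty})$ and $Q(x):=Df(v_{\star}(x))-Df(v_{\infty})$, so that \eqref{equ:NonlinearEigenvalueProblemRealFormulation} becomes the homogeneous resolvent equation $(\lambda I-\L_{Q})v=0$ for $\L_{Q}$ as in \eqref{equ:operatorLQ}. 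As in step~4 of the proof of Theorem~\ref{thm:NonlinearOrnsteinUhlenbeckSteadyState}, choose $K_{1}$ according to \eqref{equ:thresholdconstant}; since $v_{\star}$ is bounded by \eqref{equ:BoundednessConditionForVStar} and $f\in C^{2}$, one obtains $Q\in L^{\infty}(\R^{d},\C^{N,N})$ and, from the mean value bound $|Q(x)|\leqslant K_{1}\sup_{z\in B_{K_{1}}(v_{\infty})}|D^{2}f(z)|$ for $|x|\geqslant R_{0}$, the smallness condition \eqref{equ:ConditionOnQ}. The remaining hypotheses of Theorem~\ref{thm:APrioriEstimatesInLpRelativelyCompactPerturbation} hold: \eqref{cond:A4DC}, \eqref{cond:A5} are assumed, \eqref{cond:A8B} and \eqref{cond:A10B} follow from \eqref{cond:A8} and \eqref{cond:A10}; with $\theta_{1},\theta_{2}$ from \eqref{equ:WeightFunctionsLQ} the rate restrictions \eqref{equ:ratebound3} and \eqref{equ:ratebound2} are exactly \eqref{equ:ExponentialRatesLQ}; $\Re\lambda\geqslant-(1-\varepsilon)\beta_{\infty}$ is the required half-plane condition; and $v\in W^{2,p}_{\mathrm{loc}}(\R^{d},\C^{N})$ (because $v$ is classical) together with the hypothesis $v\in L^{p}_{\theta_{1}}(\R^{d},\C^{N})$ provides the last input. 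Theorem~\ref{thm:APrioriEstimatesInLpRelativelyCompactPerturbation} with $g=0$ then yields $v\in W^{1,p}_{\theta_{2}}(\R^{d},\C^{N})$.

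For part~(2) I would argue by induction on $k\geqslant 2$, the base case $k=2$ being part~(1) followed by one differentiation step of the type below. For the inductive step, fix a multi-index $\alpha$ with $|\alpha|=k-1$ and apply $D^{\alpha}$ to \eqref{equ:NonlinearEigenvalueProblemRealFormulation}: the Leibniz rule applied to $\left\langle Sx,\nabla v\right\rangle$ (as in \eqref{equ:DalphaRotationalTerm}) and to $Df(v_{\star})v$, together with Fa\`a di Bruno's formula for the derivatives of $Df(v_{\star})$ (as in \eqref{equ:DalphaNonlinearity}), shows that $w_{\star}:=D^{\alpha}v$ solves $(\lambda I-\L_{Q})w_{\star}=g$ with the same $B_{\infty},Q$ as above and with $g=g_{1}+g_{2}$, where $g_{1}$ collects the drift-commutator terms $\sum_{i,j}S_{ij}\binom{\alpha}{e_{j}}D^{\alpha-e_{j}+e_{i}}v$ (of order $k-1$ in $v$) and $g_{2}$ collects the terms $(D^{\ell+1}f)(v_{\star})$ applied to a product of derivatives of $v_{\star}$ of total order $|\beta|$ times exactly one factor $D^{\alpha-\beta}v$ with $1\leqslant|\beta|\leqslant k-1$. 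Since $f\in C^{k}$, $v_{\star}\in C^{k+1}$ and $p\geqslant\frac{d}{2}$, Corollary~\ref{cor:NonlinearOrnsteinUhlenbeckSteadyStateMoreRegularity} and Remark~\ref{rem:HigherRegularity} apply and give $v_{\star}-v_{\infty}\in W^{k-1,p}_{\theta}(\R^{d},\R^{N})$, while the inductive hypothesis gives $v\in W^{k-1,p}_{\theta_{2}}(\R^{d},\C^{N})\subseteq W^{k-1,p}_{\theta_{1}}$; in particular $w_{\star}\in W^{2,p}_{\mathrm{loc}}(\R^{d},\C^{N})\cap L^{p}_{\theta_{1}}(\R^{d},\C^{N})$.

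It then remains to check $g\in L^{p}_{\theta_{2}}(\R^{d},\C^{N})$ and apply Theorem~\ref{thm:APrioriEstimatesInLpRelativelyCompactPerturbation} once more — now with this nonzero $g$ — to conclude $D^{\alpha}v\in W^{1,p}_{\theta_{2}}$, hence $v\in W^{k,p}_{\theta_{2}}$, closing the induction. The term $g_{1}$ lies in $L^{p}_{\theta_{2}}$ directly from $v\in W^{k-1,p}_{\theta_{2}}$. For $g_{2}$ one distributes the weight $\theta_{2}$ among the factors of each product, using $\theta_{2}\geqslant 1$ so that $\theta_{2}^{a}\leqslant\theta_{2}$ for $0\leqslant a\leqslant 1$, invokes \eqref{eq:sobolevweights} to place $\theta_{2}D^{m}v_{\star}\in W^{k-1-m,p}$ (from $v_{\star}-v_{\infty}\in W^{k-1,p}_{\theta}$) and $\theta_{2}D^{\alpha-\beta}v\in W^{|\beta|,p}$ (from $v\in W^{k-1,p}_{\theta_{2}}$), and then combines the Sobolev embedding \eqref{eq:sobolevembed} with the generalized H\"older inequality \eqref{eq:hoeldergeneral}, exactly as in step~5 of the proof of Corollary~\ref{cor:NonlinearOrnsteinUhlenbeckSteadyStateMoreRegularity}; the hypotheses $p\geqslant\frac{d}{2}$ and $k\geqslant 2$ are precisely what make the resulting H\"older exponents admissible for these embeddings (the borderline case $p=\frac{d}{2}$ relying on $W^{1,p}\hookrightarrow L^{2p}$). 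Finally, the pointwise estimate \eqref{equ:ratederiv} follows from $v\in W^{k,p}_{\theta_{2}}$ via \eqref{eq:sobolevweights} and \eqref{equ:SobolevEmbedding}, just as in Corollary~\ref{cor:pointwise}.

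The only genuinely new work, and the main obstacle, is the combinatorial bookkeeping in part~(2): tracking the derivative orders produced by the Leibniz and Fa\`a di Bruno expansions of the \emph{variable-coefficient} zeroth-order term $Df(v_{\star})v$, and selecting the H\"older exponents so that each factor lands in a Sobolev space that embeds into the required $L^{q}$. One should also verify that moving the commutator and nonlinearity-remainder terms to the right-hand side does not spoil the smallness estimate \eqref{equ:ConditionOnQ} for the coefficient $Q$ retained on the left (it does not, since $Q$ is unchanged from part~(1), at the possible cost of shrinking $K_{1}$ by a harmless constant factor, exactly as in Corollary~\ref{cor:NonlinearOrnsteinUhlenbeckSteadyStateMoreRegularity}). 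Everything else is a direct transcription of the nonlinear arguments.
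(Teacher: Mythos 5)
Your overall route coincides with the paper's: part (1) is exactly the paper's argument (rewrite the eigenvalue equation as the homogeneous resolvent equation $(\lambda I-\L_Q)v=0$ with $Q=Df(v_{\star})-Df(v_{\infty})$, verify \eqref{equ:ConditionOnQ} via the threshold constant $K_1$, and apply Theorem \ref{thm:APrioriEstimatesInLpRelativelyCompactPerturbation} with $g=0$ and $\lambda\neq 0$), and part (2) follows the same induction with the same splitting $g=g_1+g_2$ into drift--commutator and Fa\`a di Bruno terms.

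There is, however, one concrete mis-step in your bookkeeping for $g_2$. You propose to use only $v_{\star}-v_{\infty}\in W^{k-1,p}_{\theta}$ and accordingly to place $\theta_2 D^{|\pi_j|}v_{\star}h_{\pi_j}$ in $W^{k-1-|\pi_j|,p}$. That is one derivative short: the term with $\beta=\alpha$ and $\ell=1$ contributes the factor $D^{|\pi_1|}v_{\star}$ with $|\pi_1|=k-1$, for which your placement yields only $W^{0,p}=L^p$, while the H\"older exponents forced by the extra factor $D^{\alpha-\beta}v$ are $p_j=\frac{k}{|\pi_j|}p$ and $p_{\ell+1}=\frac{k}{k-|\beta|}p$ (so that $\sum_j\frac{1}{p_j}=\frac{1}{p}$), and $L^p$ does not embed into $L^{p_j}$ for $p_j=\frac{k}{k-1}p>p$. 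The hypotheses $f\in C^{k}$, $v_{\star}\in C^{k+1}$ give, via Corollary \ref{cor:NonlinearOrnsteinUhlenbeckSteadyStateMoreRegularity}, the full regularity $v_{\star}-v_{\infty}\in W^{k,p}_{\theta_2}$, hence $\theta_2 D^{|\pi_j|}v_{\star}h_{\pi_j}\in W^{k-|\pi_j|,p}$ with $k-|\pi_j|\geqslant 1$; only then does the embedding condition $\frac{d}{p}-(k-|\pi_j|)\leqslant\frac{d}{p_j}$ reduce to $\frac{d}{p}\big(1-\frac{|\pi_j|}{k}\big)\leqslant k-|\pi_j|$, which holds precisely because $p\geqslant\frac{d}{2}$ and $k\geqslant 2$. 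In particular the exponents cannot be transcribed ``exactly as in step 5'' of that corollary, where $\ell\geqslant 2$, $|\pi_j|\leqslant k-2$ and $p_j=\frac{k-1}{|\pi_j|}p$: they must be rebalanced to accommodate the additional factor $D^{\alpha-\beta}v$, and the extra derivative of $v_{\star}$-regularity available from the hypotheses must actually be used. With that correction your argument is the paper's.
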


\begin{remark}
  In case of $d\in\{2,3\}$ and $p=2$ it is sufficient to choose $k=5$ to obtain pointwise estimates for $v$ of order $0\leqslant|\alpha|\leqslant 2$. 
  This requires to assume $f\in C^{5}(\R^N,\R^N)$, $v_{\star}\in C^{6}(\R^d,\R^N)$ and $v\in C^{6}(\R^d,\C^N)$. Moreover, note that the space 
  $L^p_{\theta_1}(\R^d,\C^N)$ does not only allow bounded but even exponentially growing eigenfunctions.
\end{remark}

\begin{proof}
  \enum{1} Let $v_{\star}$ be a classical solution of \eqref{equ:NonlinearProblemRealFormulation} satisfying \eqref{equ:BoundednessConditionForVStar} 
  and let $v$ be a classical solution of \eqref{equ:NonlinearEigenvalueProblemRealFormulation} satisfying $v\in L^p_{\theta_1}(\R^d,\C^N)$ with $\theta_1$ 
  from \eqref{equ:WeightFunctionsLQ} and $\mu_1$ such that \eqref{equ:ratebound3}. Then $v$ satisfies
  \begin{align*}
    0 = \lambda v(x) - \left(A\triangle v(x) + \left\langle Sx,\nabla v(x)\right\rangle - B_{\infty}v(x) + Q(x)v(x)\right)
      = [\left(\lambda I-\L_{Q}\right)v](x),\,x\in\R^d
  \end{align*}
  with $B_{\infty} := -Df(v_{\infty})$ and $Q(x) := Df(v_{\star}(x))-Df(v_{\infty})$ as in \eqref{equ:MatrixBandQMoreRegularity}. Now, $v\in W^{1,p}_{\theta_2}(\R^d,\C^N)$ 
  follows from Theorem \ref{thm:APrioriEstimatesInLpRelativelyCompactPerturbation} with $\mu_1<0$, $g=0$ and $\lambda\in\C$ with 
  $\Re\lambda\geqslant -(1-\varepsilon)\beta_{\infty}$ . Note, that the assumptions of Theorem \ref{thm:APrioriEstimatesInLpRelativelyCompactPerturbation} 
  are satisfied as shown in the proof of Corollary \ref{cor:NonlinearOrnsteinUhlenbeckSteadyStateMoreRegularity}. 

  \enum{2} Assuming more smoothness for $f,v,v_{\star}$, we prove $v\in W^{k,p}_{\theta_2}(\R^d,\C^N)$  by induction on $k$. Let $\alpha\in\N_0^d$ be a 
  multi-index of length $|\alpha|=k-1$. Similar to the proof of Corollary \ref{cor:NonlinearOrnsteinUhlenbeckSteadyStateMoreRegularity}, an application 
  of $D^{\alpha}$ to \eqref{equ:NonlinearEigenvalueProblemRealFormulation} yields an inhomogenous equation for $w:=D^{\alpha}v$,  
  \begin{align}
    \label{equ:LQGleichungEigenfunctions}
    [(\lambda I- \L_Q) w](x)=g(x),\,x\in\R^d
  \end{align}
  with $B_{\infty}$ and $Q(x)$ as in \eqref{equ:MatrixBandQMoreRegularity}, $\lambda\in\C$ with $\Re\lambda\geqslant -(1-\varepsilon)\beta_{\infty}$, and 
  $g(x):=g_1(x)+g_2(x)$ where 
  \begin{align}
    \label{equ:g1g2Eigenfunctions}
    \begin{split}
    g_1 :=& \sum_{i=1}^{d} \sum_{\substack{j=1\\e_j\leqslant\alpha}}^{d} S_{ij} \binom{\alpha}{e_j} D^{\alpha-e_j+e_i}v, \\ 
    g_2 :=& \sum_{\substack{\beta\leqslant\alpha\\|\beta|\geqslant 1}} \binom{\alpha}{\beta}
\sum_{\ell=1}^{|\beta|}\sum_{\pi\in\mathcal{P}_{\ell,|\beta|}}(D^{\ell+1}f)(v_{\star})\left[D^{|\pi_1|}v_{\star}h_{\pi_1},\ldots,D^{|\pi_{\ell}|}v_{\star}h_{\pi_{\ell}},D^{\alpha-\beta}v\right].
    \end{split}
  \end{align}
  In this expression, the multilinear argument $h$ is defined as in \eqref{equ:multilineararg} with $\alpha$ replaced by $\beta$. Below we prove 
  $g_1,g_2\in L^p_{\theta_2}(\R^d,\C^N)$, so that Theorem \ref{thm:APrioriEstimatesInLpRelativelyCompactPerturbation} implies $w=D^{\alpha}v\in W^{1,p}_{\theta_2}(\R^d,\C^N)$ 
  and therefore, $v\in W^{k,p}_{\theta_2}(\R^d,\C^N)$.\\
  First we consider $g_1$: By the first part of this Corollary (base case $k=2$) and by the induction hyperthesis (induction step $k>2$) we have 
  $v\in W^{k-1,p}_{\theta_2}(\R^d,\C^N)$. As  in the proof of Corollary \ref{cor:NonlinearOrnsteinUhlenbeckSteadyStateMoreRegularity}, we then deduce 
  $g_1\in L^p_{\theta_2}(\R^d,\R^N)$. 
  Finally, we show $g_2\in L^p_{\theta_2}(\R^d,\R^N)$ by applying H\"older's inequality 
  \eqref{eq:hoeldergeneral} with $p_j:=\frac{k}{|\pi_j|}p$ and $u_j:=\theta_2^{\frac{1}{j}}\left|D^{|\pi_j|}v_{\star}h_{\pi_j}\right|$ for $j=1,\ldots,\ell$, 
  $p_{\ell+1}:=\frac{k}{k-|\beta|}p$ and $u_{\ell+1}:=\left|D^{\alpha-\beta}v\right|$. Note that 
$\sum_{j=1}^{\ell+1}\frac{1}{p_j}=\frac{1}{p}$ follows from $\sum_{j=1}^{\ell}|\pi_j|=|\beta|$.
  We obtain
  \begin{align*}
              & \left\|\theta_2(D^{\ell+1}f)(v_{\star})\left[D^{|\pi_1|}v_{\star}h_{\pi_1},\ldots,D^{|\pi_{\ell}|}v_{\star}h_{\pi_{\ell}},D^{\alpha-\beta}v\right]\right\|_{L^p} \\
    \leqslant & \left\|(D^{\ell+1}f)(v_{\star})\right\|_{L^{\infty}} \Big\|\left|D^{\alpha-\beta}v\right|\prod_{j=1}^{\ell}\theta_2^{\frac{1}{j}}\left|D^{|\pi_j|}v_{\star}h_{\pi_j}\right|\Big\|_{L^p} \\
    \leqslant & \left\|(D^{\ell+1}f)(v_{\star})\right\|_{L^{\infty}} \left\|D^{\alpha-\beta}v\right\|_{L^{p_{\ell+1}}} \prod_{j=1}^{\ell}\left\|\theta_2^{\frac{1}{j}}D^{|\pi_j|}v_{\star}h_{\pi_j}\right\|_{L^{p_j}} \\
    \leqslant & \left\|(D^{\ell+1}f)(v_{\star})\right\|_{L^{\infty}} \left\|D^{\alpha-\beta}v\right\|_{L^{p_{\ell+1}}} \prod_{j=1}^{\ell}\left\|\theta_2 D^{|\pi_j|}v_{\star}h_{\pi_j}\right\|_{L^{p_j}}
  \end{align*}
  since $\theta_2(x)\geqslant 1$ and $j\geqslant 1$ imply $\theta_2^{\frac{1}{j}}(x)\leqslant \theta_2(x)$. Note that $v_{\star}\in C_{\mathrm{b}}(\R^d,\R^N)$, 
  $f\in C^{k}(\R^N,\R^N)$ and $\ell\leqslant|\beta|\leqslant|\alpha|=k-1$ imply the boundedness of 
$(D^{\ell+1} f)(v_{\star})$. 
  As in the proof of Corollary \ref{cor:NonlinearOrnsteinUhlenbeckSteadyStateMoreRegularity}, both $\theta_2 D^{|\pi_j|}v_{\star}h_{\pi_j}\in L^{p_j}(\R^d,\R^N)$ and 
  $\theta_2D^{\alpha-\beta}v\in L^{p_{\ell+1}}(\R^d,\C^N)$ follow from the Sobolev embedding \eqref{eq:sobolevembed}: Since $f\in  C^{k-1}(\R^N,\R^N)$ 
  and $v_{\star}\in C^{k+1}(\R^d,\R^N)$, Corollary \ref{cor:NonlinearOrnsteinUhlenbeckSteadyStateMoreRegularity} implies $v_{\star}-v_{\infty}\in W^{k,p}_{\theta_2}(\R^d,\R^N)$, 
  therefore $\theta_2 D^{|\pi_j|}v_{\star}h_{\pi_j}\in  W^{k-|\pi_j|,p}(\R^d,\R^N)$. The Sobolev embedding \eqref{eq:sobolevembed} shows 
  \begin{align*}
    \theta_2 D^{|\pi_j|}v_{\star}h_{\pi_j}\in W^{k-|\pi_j|,p}(\R^d,\R^N) \subseteq L^{p_j}(\R^d,\R^N)\;\forall\,1\leqslant|\pi_j|\leqslant|\beta|,
  \end{align*}
  provided that $1<p<p_j<\infty$ and $\frac{d}{p}-(k-|\pi_j|)\leqslant\frac{d}{p_j}$. These conditions are obviously satisfied, 
  since $p\geqslant\frac{d}{2}$, $1\leqslant|\pi_j|\leqslant k-1$.
  Next, since $v\in W^{k-1,p}_{\theta_2}(\R^d,\C^N)$, \eqref{eq:sobolevweights} implies $\theta_2D^{\gamma}v\in W^{k-1-|\gamma|,p}(\R^d,\C^N)$ 
  for all $0\leqslant|\gamma|\leqslant k-1$. For $\gamma=\alpha-\beta$ with $|\alpha|=k-1$ and $\beta\leqslant\alpha$ we have $k-1-|\gamma|=|\beta|$ 
  and therefore, the Sobolev embedding \eqref{eq:sobolevembed} implies
  \begin{align*}
   \theta_2 D^{\alpha-\beta}v\in W^{|\beta|,p}(\R^d,\R^N)\subset L^{p_{\ell+1}}(\R^d,\R^N),
  \end{align*}
  provided that $1<p<p_{\ell+1} <\infty$ and $\frac{d}{p}-|\beta|\leqslant\frac{d}{p_{\ell+1}}$.
  These conditions are satisfied by the same arguments as above. This concludes the proof of $g_2\in L^p_{\theta_2}(\R^d,\R^N)$.

  Finally, the pointwise estimates  follow when combining our previous Sobolev estimates with \eqref{eq:sobolevweights} and the embedding \eqref{equ:SobolevEmbedding}, 
  in a similar manner  as in Corollary \ref{cor:pointwise} 
\end{proof}

\begin{remark}[Higher regularity of the eigenfunction $v$]\label{rem:HigherRegularityEigenfunctions}
  Collecting the results of Theorem \ref{thm:LinearizedOrnsteinUhlenbeckExponentialDecayInLp} and Remark \ref{rem:HigherRegularity}, we obtain
  \begin{align*}
    f\in C^{\max\{2,k\}},\;v_{\star},v\in C^{k+1} \;\Longrightarrow\; v_{\star}-v_{\infty},v\in W^{k,p}_{\theta_2}
  \end{align*} 
  for any $k\in\N$, provided that $1<p<\infty$ from \eqref{cond:A4DC} satisfies $p\geqslant\frac{d}{2}$ if $k\geqslant 2$. We also recall the role 
  of the parameter $\varepsilon$ in the exponential estimates. Theorem \ref{thm:LinearizedOrnsteinUhlenbeckExponentialDecayInLp} shows that every 
  eigenfunction associated to an eigenvalue $\lambda$ with $\Re\lambda>-\beta_{\infty}$ decays exponentially in space. Usually, one expects this 
  behavior even for $\Re\lambda>-\bzero$. The rate of decay is controlled by $\varepsilon \in (0,1)$. If $\Re\lambda\geqslant -(1+\varepsilon)\beta_{\infty}$ 
  is close to $-\beta_{\infty}$ we may take $\varepsilon$ close to $0$ and obtain a small rate $\mu_2$ of decay according to \eqref{equ:ratebound2}. 
  On the other hand, if $\lambda$ is close to the imaginary axis we may take $\varepsilon$ close to $1$ and obtain a higher rate of decay. 
\end{remark}

\subsection{Eigenfunctions belonging to eigenvalues on the imaginary axis}
\label{subsec:5.2}

Some classical solutions of the eigenvalue problem \eqref{equ:EigenvalueProb} are due to equivariance of the underlying equations and can be expressed in terms of the rotating wave itself. The following result  proved in \cite[Theorem 9.4]{Otten2014}, specifies these eigenfunctions.

\begin{theorem}[Point spectrum on the imaginary axis]\label{thm:EigenfunctionsOfTheLinearizedOrnsteinUhlenbeckInLp}
  Let $S\in\R^{d,d}$ be skew-symmetric and let $U\in\C^{d,d}$ denote the unitary matrix satisfying $\Lambda_S=\bar{U}^TSU$ with $\Lambda_S=\diag(\lambda_1^S,\ldots,\lambda_d^S)$ 
  and $\lambda_1^S,\ldots,\lambda_d^S\in\sigma(S)$. Moreover, let $v_{\star}\in C^3(\R^d,\R^N)$ be a classical solution of \eqref{equ:NonlinearProblemRealFormulation}, 
  then the function $v:\R^d\rightarrow\C^N$ given by
  \begin{align}
    \label{equ:EigenfunctionsOniR}
    v(x) = \left\langle C^{\mathrm{rot}}x+C^{\mathrm{tra}},\nabla v_{\star}(x)\right\rangle
         = \sum_{i=1}^{d-1}\sum_{j=i+1}^{d}C_{ij}^{\mathrm{rot}}(x_j D_i-x_i D_j)v_{\star}(x) + \sum_{l=1}^{d}C_l^{\mathrm{tra}}D_l v_{\star}(x)
  \end{align}
  is a classical solution of the eigenvalue problem \eqref{equ:EigenvalueProb} if $C^{\mathrm{rot}}\in\C^{d,d}$ and $C^{\mathrm{tra}}\in\C^d$ either satisfy
  \begin{align*}
    \lambda=-\lambda_l^S,\quad C^{\mathrm{rot}}=0,\quad C^{\mathrm{tra}}=Ue_l
  \end{align*}
  for some $l=1,\ldots,d$, or
  \begin{align*}
    \lambda=-(\lambda_n^S+\lambda_m^S),\quad C^{\mathrm{rot}}=U(I_{nm}-I_{mn})U^T,\quad C^{\mathrm{tra}}=0
  \end{align*}
  for some $n=1,\ldots,d-1$ and $m=n+1,\ldots,d$. Here $I_{nm}\in\R^{d,d}$ denotes the matrix having the entries $1$ at the $n$-th row and $m$-th column and $0$ otherwise. 
  All the eigenvalues above lie on the imaginary axis.
\end{theorem}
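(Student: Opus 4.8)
The plan is to recognise the functions in \eqref{equ:EigenfunctionsOniR} as directional derivatives of the profile $v_\star$ along affine vector fields, and to reduce the eigenvalue equation to two purely linear-algebraic conditions which encode the Euclidean equivariance of \eqref{equ:NonlinearProblemRealFormulation}. Concretely, for $C\in\C^{d,d}$ with $C^T=-C$ and $c\in\C^d$ put $w:=\langle Cx+c,\nabla v_\star(x)\rangle=\sum_{i=1}^d(Cx+c)_iD_iv_\star$; since $v_\star\in C^3(\R^d,\R^N)$ this $w$ lies in $C^2(\R^d,\C^N)$, hence is admissible as a classical solution. I would then apply the first order operator $v\mapsto\langle Cx+c,\nabla v\rangle$ to the stationary equation $A\triangle v_\star+\langle Sx,\nabla v_\star\rangle+f(v_\star)=0$ and prove the identity
\[
  \L w=\langle (CS-SC)x-Sc,\,\nabla v_\star(x)\rangle,
\]
where $\L v=A\triangle v+\langle Sx,\nabla v\rangle+Df(v_\star)v$ is the linearized operator of \eqref{equ:EigenvalueProb}.

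For this identity I would handle the three terms of the stationary equation separately. The nonlinearity is immediate from the chain rule, $\langle Cx+c,\nabla(f(v_\star))\rangle=Df(v_\star)\langle Cx+c,\nabla v_\star\rangle=Df(v_\star)w$, supplying the zeroth order part of $\L w$. For the diffusion term, expanding $\triangle w$ by the Leibniz rule produces a cross term proportional to $\sum_{i,k}C_{ik}D_iD_kv_\star$, which vanishes because $D_iD_kv_\star$ is symmetric in $(i,k)$ while $C$ is skew-symmetric; hence $\langle Cx+c,\nabla(A\triangle v_\star)\rangle=A\triangle w$, and this cancellation is precisely where skew-symmetry of $C=C^{\mathrm{rot}}$ enters. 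For the drift term, a direct index computation using $S^T=-S$ shows that applying the field to $\langle Sx,\nabla v_\star\rangle$ produces $\langle Sx,\nabla w\rangle+\langle (SC-CS)x+Sc,\nabla v_\star\rangle$. Adding the three contributions and using the stationary equation yields the displayed formula.

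With the identity in hand, $w$ solves $\L w=\lambda w$ whenever $(CS-SC)x-Sc=\lambda(Cx+c)$ for all $x\in\R^d$, which holds in particular when
\[
  CS-SC=\lambda C\qquad\text{and}\qquad Sc=-\lambda c .
\]
It remains to exhibit the two solution families from the spectral data $SU=U\Lambda_S$, so that the columns $u_j:=Ue_j$ satisfy $Su_j=\lambda_j^Su_j$. Taking $C=0$ and $c=u_l$ forces $\lambda=-\lambda_l^S$, which is the first case. Taking $c=0$ and $C=u_nu_m^T-u_mu_n^T=U(I_{nm}-I_{mn})U^T$---which is skew-symmetric and, from $Su_j=\lambda_j^Su_j$ together with $u_m^TS=-\lambda_m^Su_m^T$, satisfies $SC-CS=(\lambda_n^S+\lambda_m^S)C$---forces $\lambda=-(\lambda_n^S+\lambda_m^S)$, which is the second case. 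Finally, since $S\in\R^{d,d}$ is skew-symmetric we have $\sigma(S)\subset i\R$, so in both families $\lambda$ is purely imaginary, proving the last assertion.

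The step I expect to be most delicate is the index bookkeeping in the drift-term computation and the careful tracking of the signs coming from $S^T=-S$; everything else is routine linear algebra and differentiation. An alternative, more conceptual route would differentiate the $\SE(d)$-orbit of the rotating wave $u_\star$ directly in the non-rotating frame, obtain solutions of the parabolic linearization along $u_\star$, and transport them into the co-rotating frame, but the direct differentiation of the stationary equation above is shorter and avoids dealing with the time-dependent linearization.
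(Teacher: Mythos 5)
Your proof is correct, and you take the approach that the cited reference (Theorem 9.4 of the thesis \cite{Otten2014}) uses: differentiate the stationary equation $A\triangle v_\star + \langle Sx,\nabla v_\star\rangle + f(v_\star)=0$ along the affine vector field $x\mapsto Cx+c$ with $C$ skew-symmetric, and observe that the skew-symmetry makes $\langle Cx+c,\nabla\cdot\rangle$ commute with $A\triangle$, that the chain rule handles $f$, and that the commutator with the drift produces exactly $\langle(SC-CS)x+Sc,\nabla v_\star\rangle$. This reduces the eigenvalue condition to the pair $CS-SC=\lambda C$, $Sc=-\lambda c$, and the two families in the statement follow by choosing $c=Ue_l$ with $C=0$, resp.\ $C=U(I_{nm}-I_{mn})U^T=u_nu_m^T-u_mu_n^T$ with $c=0$, together with $u_j^TS=-\lambda_j^Su_j^T$ (a consequence of $S^T=-S$). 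I verified the key identity $\L w=\langle(CS-SC)x-Sc,\nabla v_\star\rangle$, the skew-symmetry of $U(I_{nm}-I_{mn})U^T$, the commutator computation $CS-SC=-(\lambda_n^S+\lambda_m^S)C$, and that $v_\star\in C^3$ gives $w\in C^2$, so $w$ is indeed a classical solution; all steps check out.
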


A direct consequence of Theorem \ref{thm:LinearizedOrnsteinUhlenbeckExponentialDecayInLp} and Theorem \ref{thm:EigenfunctionsOfTheLinearizedOrnsteinUhlenbeckInLp} is 
that the eigenfunctions $v$ from \eqref{equ:EigenfunctionsOniR} belong to $W^{1,p}_{\theta_2}(\R^d,\C^N)$ and decay exponentially in space, \cite[Theorem 9.8]{Otten2014}.

\begin{corollary}[Exponential decay of eigenfunctions for eigenvalues on $i\R$]\label{cor:ExponentialDecayOfEigenfunctions}
  Let all assumptions of the statements \enum{1} and \enum{2} of Theorem \ref{thm:LinearizedOrnsteinUhlenbeckExponentialDecayInLp} be satisfied. 
  Then the classical solution 
  \begin{align*}
    v(x) = \left\langle C^{\mathrm{rot}}x+C^{\mathrm{tra}},\nabla v_{\star}(x)\right\rangle, \quad x\in \R^d
  \end{align*}
  of the eigenvalue problem \eqref{equ:NonlinearEigenvalueProblemRealFormulation} with $\lambda$, $C^{\mathrm{rot}}$ and $C^{\mathrm{tra}}$ from 
  Theorem \ref{thm:EigenfunctionsOfTheLinearizedOrnsteinUhlenbeckInLp} lies in $W^{k,p}_{\theta_2}(\R^d,\C^N)$ for every exponential decay rate \eqref{equ:ratebound2}. 
  Moreover, the function $v\in W^{k,p}_{\theta_2}(\R^d,\C^N)$ satisfies the pointwise estimate \eqref{equ:ratederiv}.
  %
\end{corollary}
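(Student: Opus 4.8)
The plan is to reduce the corollary to a direct application of Theorem~\ref{thm:LinearizedOrnsteinUhlenbeckExponentialDecayInLp} to the particular eigenfunction $v(x)=\langle C^{\mathrm{rot}}x+C^{\mathrm{tra}},\nabla v_{\star}(x)\rangle$ supplied by Theorem~\ref{thm:EigenfunctionsOfTheLinearizedOrnsteinUhlenbeckInLp}. That theorem already tells us $v$ is a classical solution of the eigenvalue problem \eqref{equ:NonlinearEigenvalueProblemRealFormulation} with eigenvalue $\lambda$ equal to $-\lambda_l^S$ or $-(\lambda_n^S+\lambda_m^S)$, where $\lambda_j^S\in\sigma(S)$. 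Since $S\in\R^{d,d}$ is skew-symmetric, its spectrum is purely imaginary, so $\Re\lambda=0$; because $\beta_{\infty}>0$ and $0<\varepsilon<1$ this yields $\Re\lambda=0>-(1-\varepsilon)\beta_{\infty}$, so the eigenvalue restriction of Theorem~\ref{thm:LinearizedOrnsteinUhlenbeckExponentialDecayInLp} holds automatically. Classical regularity $v\in C^2$ follows from $v_{\star}\in C^3$ (already among the assumed hypotheses), and the higher smoothness of $v_{\star}$ required in part~\enum{2} transfers to $v$ through the explicit formula (this amounts to $v_{\star}\in C^{k+2}$, a property of the profile that is either assumed or follows from elliptic regularity applied to \eqref{equ:NonlinearProblemRealFormulation} once $f\in C^k$, so that $v\in C^{k+1}$).

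The only point needing a short argument is the weighted-integrability hypothesis $v\in L^p_{\theta_1}(\R^d,\C^N)$ for some admissible growth rate $\mu_1$ as in \eqref{equ:ratebound3}; I would take any $\mu_1\le 0$ in that interval. By Theorem~\ref{thm:NonlinearOrnsteinUhlenbeckSteadyState} — or Corollary~\ref{cor:NonlinearOrnsteinUhlenbeckSteadyStateMoreRegularity} in the smoother setting, the threshold constant $K_1$ being the one already assumed for the profile — we have $v_{\star}-v_{\infty}\in W^{1,p}_{\theta}(\R^d,\R^N)$ with $\theta(x)=\exp\bigl(\mu\sqrt{|x|^2+1}\bigr)$ for a strictly positive $\mu\le\varepsilon\frac{\sqrt{\azero\bzero}}{\amax p}$ (positivity is available since $\azero,\bzero>0$); in particular each $D_iv_{\star}\in L^p_{\theta}(\R^d,\R^N)$. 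The eigenfunction $v$ is a finite linear combination of the terms $x_jD_iv_{\star}$ and $D_lv_{\star}$, and since $\mu_1\le 0<\mu$ we have $|x|\,\theta_1(x)\le|x|\le C\theta(x)$ for all $x\in\R^d$; hence $\|\theta_1 v\|_{L^p}\le C\sum_i\|\theta D_iv_{\star}\|_{L^p}<\infty$, that is, $v\in L^p_{\theta_1}(\R^d,\C^N)$.

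With all hypotheses verified, Theorem~\ref{thm:LinearizedOrnsteinUhlenbeckExponentialDecayInLp}\enum{1} gives $v\in W^{1,p}_{\theta_2}(\R^d,\C^N)$ for every decay rate $0\le\mu_2\le\varepsilon\frac{\sqrt{\azero\bzero}}{\amax p}$, and under the additional smoothness assumptions part~\enum{2} of the same theorem upgrades this to $v\in W^{k,p}_{\theta_2}(\R^d,\C^N)$ together with the pointwise estimate \eqref{equ:ratederiv}; this is precisely the statement of the corollary. I expect the only genuine obstacle to be exactly this $L^p_{\theta_1}$-membership of $v$: the difficulty is conceptual rather than computational, since $v$ carries the \emph{unbounded} coefficient $C^{\mathrm{rot}}x$ and is therefore not a lower-order object, but it disappears once one notes that the exponential spatial decay of $\nabla v_{\star}$ established in Section~\ref{sec:4} via Theorem~\ref{thm:APrioriEstimatesInLpRelativelyCompactPerturbation} beats the linear growth. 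Everything else is bookkeeping, so the corollary follows as a composition of Theorem~\ref{thm:EigenfunctionsOfTheLinearizedOrnsteinUhlenbeckInLp}, Theorem~\ref{thm:NonlinearOrnsteinUhlenbeckSteadyState}/Corollary~\ref{cor:NonlinearOrnsteinUhlenbeckSteadyStateMoreRegularity}, and Theorem~\ref{thm:LinearizedOrnsteinUhlenbeckExponentialDecayInLp}.
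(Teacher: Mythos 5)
Your proposal is correct and follows essentially the same route as the paper: the paper's proof likewise reduces the corollary to an application of Theorem \ref{thm:LinearizedOrnsteinUhlenbeckExponentialDecayInLp}, noting that $v\in C^{k+1}$ because $v_{\star}\in C^{k+2}$ and that $\beta_{\infty}>0$ admits eigenvalues $\lambda\in i\R$. Your explicit verification that $v\in L^p_{\theta_1}$ (linear growth of $C^{\mathrm{rot}}x$ beaten by the strictly positive decay rate of $\nabla v_{\star}$ from Theorem \ref{thm:NonlinearOrnsteinUhlenbeckSteadyState}) is a detail the paper leaves implicit, and it is carried out correctly.
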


\begin{proof}
  In order to apply Theorem \ref{thm:LinearizedOrnsteinUhlenbeckExponentialDecayInLp} to $v(x)=\left\langle C^{\mathrm{rot}}x+C^{\mathrm{tra}},\nabla v_{\star}(x)\right\rangle$, 
  we observe that the map $x \mapsto \left\langle C^{\mathrm{rot}}x+C^{\mathrm{tra}},\nabla v_{\star}(x)\right\rangle$ is of class $ C^{k+1}$ since $v_{\star}\in C^{k+2}(\R^d,\R^N)$. 
  In particular, $\beta_{\infty}>0$ allows to deal with eigenvalues $\lambda\in i\R$. 
\end{proof}

\begin{remark}\label{rem:PointEssential}
  Later on, we numerically approximate the spectrum of the linearization $\L$ for \eqref{equ:FarField2}. For this purpose we decompose the $L^p$-spectrum $\sigma(\L)$ of $\L$ into the disjoint 
  union of point spectrum $\sigma_{\mathrm{point}}(\L)$ and essential spectrum $\sigma_{\mathrm{ess}}(\L)$ 
  \begin{align}
    \label{equ:SpectrumDecomposition}
    \sigma(\L) = \sigma_{\mathrm{point}}(\L) \cup \sigma_{\mathrm{ess}}(\L)
  \end{align}
  The point spectrum of $\L$ is affected by the symmetries of the group action and contains the eigenvalues described in Theorem \ref{thm:EigenfunctionsOfTheLinearizedOrnsteinUhlenbeckInLp}. 
  In particular, it contains the spectrum of $S$ and the sum of its different eigenvalues, i.e.
  \begin{align}
    \label{equ:PointSpectrum}
    \sigma_{\mathrm{point}}^{\mathrm{part}}(\L):=\sigma(S)\cup\{\lambda_1+\lambda_2\mid \lambda_1,\lambda_2\in\sigma(S),\,\lambda_1\neq\lambda_2\}
    \subseteq\sigma_{\mathrm{point}}(\L)
  \end{align}
  The associated eigenfunctions are explictly known, see \eqref{equ:EigenfunctionsOniR}, and they are exponentially localized as shown in Corollary 
  \ref{cor:ExponentialDecayOfEigenfunctions}. In general, $\sigma_{\mathrm{point}}(\L)$ contains further eigenvalues. Neither these additional eigenvalues 
  nor their associated eigenfunctions can usually be determined explicitly. The essential spectrum of $\L$ depends on the asymptotic behavior of the wave at 
  infinity. Under the same assumptions as in Theorem \ref{thm:NonlinearOrnsteinUhlenbeckSteadyState} one derives a dispersion relation for rotating waves, 
  see \cite[Section 9.5]{Otten2014},
  \begin{align}
    \label{equ:DispersionRelation}
    \det\left(\lambda I_N+\omega^2 A - Df(v_{\infty}) + i\sum_{l=1}^{m}n_l\sigma_l I_N\right)=0\text{ for some $\omega\in\R$, $n_l\in\Z$}
  \end{align}
  which then yields information about the essential spectrum
  \begin{align}
    \label{equ:EssentialSpectrum}
    \sigma_{\mathrm{ess}}^{\mathrm{part}}(\L):=\{\lambda\in\C\mid\text{$\lambda$ satisfies \eqref{equ:DispersionRelation}}\}\subseteq\sigma_{\mathrm{ess}}(\L).
  \end{align}
  Here, $S$ has the $(d-2m)$--fold eigenvalue $0$ and nontrivial eigenvalues
$\pm i \sigma_l,l=1,\ldots,m$ on the imaginary axis.
For more details we refer to the examples in Section \ref{sec:6} and to \cite[Chapter 9,10]{Otten2014}.  
\end{remark}

%
%
\sect{Rotating waves in reaction diffusion systems: \\The cubic-quintic complex Ginzburg-Landau equation}
\label{sec:6}

Consider the \begriff{cubic-quintic complex Ginzburg-Landau equation (QCGL)}, \cite{GinzburgLandau1950},
\begin{align}
  \label{equ:ComplexQuinticCubicGinzburgLandauEquation}
  u_t = \alpha\triangle u + u\left(\delta + \beta\left|u\right|^2 + \gamma\left|u\right|^4\right)
\end{align}
where $u:\R^d\times[0,\infty)\rightarrow\C$, $d\in\{2,3\}$, $\alpha,\beta,\gamma,\delta\in\C$ with $\Re\alpha>0$ and $f:\C\rightarrow\C$ given by
\begin{align}
  \label{equ:NonlinearityGLComplexVersion}
  f(u) := u\left(\delta + \beta\left|u\right|^2 + \gamma\left|u\right|^4\right).
\end{align}
The real-valued version of \eqref{equ:ComplexQuinticCubicGinzburgLandauEquation} reads as follows
\begin{align}
  \label{equ:ComplexQuinticCubicGinzburgLandauEquationRealVersion}
  \mathbf{u}_t = \mathbf{A}\triangle\mathbf{u} + \mathbf{f}(\mathbf{u})\quad\text{with}\quad 
  \textbf{A}:= \begin{pmatrix} \alpha_1 & -\alpha_2 \\ \alpha_2 & \alpha_1\end{pmatrix},\quad
  \mathbf{u}=\begin{pmatrix} u_1 \\ u_2 \end{pmatrix}
\end{align}
and $\mathbf{f}:\R^2\rightarrow\R^2$ given by
\begin{align}
  \label{equ:NonlinearityGLRealVersion}
  \mathbf{f}\begin{pmatrix} u_1 \\ u_2 \end{pmatrix} := 
  \begin{pmatrix} \left(u_1\delta_1-u_2\delta_2\right) + \left(u_1\beta_1-u_2\beta_2\right)\left(u_1^2+u_2^2\right) + \left(u_1\gamma_1-u_2\gamma_2\right)\left(u_1^2+u_2^2\right)^2 \\
                  \left(u_1\delta_2+u_2\delta_1\right) + \left(u_1\beta_2+u_2\beta_1\right)\left(u_1^2+u_2^2\right) + \left(u_1\gamma_2+u_2\gamma_1\right)\left(u_1^2+u_2^2\right)^2\end{pmatrix},
\end{align}
where $u=u_1+iu_2$, $\alpha=\alpha_1+i\alpha_2$, $\beta=\beta_1+i\beta_2$, $\gamma=\gamma_1+i\gamma_2$, $\delta=\delta_1+i\delta_2$.

This equation describes different aspects of signal propagation in heart tissue, superconductivity, superfluidity, nonlinear optical systems, see \cite{Moores1993}, 
photonics, plasmas, physics of lasers, Bose-Einstein condensation, liquid crystals, fluid dynamics, chemical waves, quantum field theory, granular media and is used 
in the study of hydrodynamic instabilities, see \cite{Mielke2002}. It shows a variety of coherent structures like stable and unstable pulses, fronts, sources and sinks 
in 1D, see \cite{AfanasjevAkhmedievSoto-Crespo1996,ThualFauve1988,TrilloTorruellas2010,VanSaarloosHohenberg1992}, vortex solitons, see \cite{CrasovanMalomedMihalache2000}, 
spinning solitons, see \cite{CrasovanMalomedMihalache2001}, dissipative ring solitons, see \cite{Soto-Crespo2009}, rotating spiral waves, propagating clusters, see \cite{RosanovFedorovShatsev2006}, and exploding dissipative 
solitons, see \cite{AkhmedievAnkiewiczSoto-Crespo2000} in 2D as well as scroll waves and spinning solitons in 3D, see \cite{MihalacheMaziluCrasovanMalomedLederer2000}.

We are interested in exponentially localized rotating wave solutions $u_{\star}:\R^d\times[0,\infty)\rightarrow\C$ of \eqref{equ:ComplexQuinticCubicGinzburgLandauEquation} and 
$\mathbf{u_{\star}}:\R^d\times[0,\infty)\rightarrow\R^2$ of \eqref{equ:ComplexQuinticCubicGinzburgLandauEquationRealVersion}. Note that, given some skew-symmetric 
$S\in\R^{d,d}$ and some vector $x_{\star}\in\R^d$, $u_{\star}(x,t)=v_{\star}(e^{-tS}(x-x_{\star}))$ with $v_{\star}:\R^d\rightarrow\C$ is a rotating wave of 
\eqref{equ:ComplexQuinticCubicGinzburgLandauEquation} if and only if $\mathbf{u_{\star}}(x,t)=\mathbf{v_{\star}}(e^{-tS}(x-x_{\star}))$  
is a rotating wave of \eqref{equ:ComplexQuinticCubicGinzburgLandauEquationRealVersion}, where $\mathbf{u_{\star}}=\begin{pmatrix}\Re u_{\star}\\\Im u_{\star}\end{pmatrix}$ 
and $\mathbf{v_{\star}}=\begin{pmatrix}\Re v_{\star}\\\Im v_{\star}\end{pmatrix}$.
 We are going to show that $v_{\star}$ (and $\mathbf{v_{\star}}$) are exponentially localized by applying Theorem \ref{thm:NonlinearOrnsteinUhlenbeckSteadyState} and Corollaries
\ref{cor:NonlinearOrnsteinUhlenbeckSteadyStateMoreRegularity}, 
\ref{cor:pointwise} to the real-valued system \eqref{equ:ComplexQuinticCubicGinzburgLandauEquationRealVersion}  
and Corollary \ref{cor:NonlinearOrnsteinUhlenbeckSteadyStateComplexVersion}
to the complex equation \eqref{equ:ComplexQuinticCubicGinzburgLandauEquation}.

First, consider the assumptions \eqref{cond:A1}--\eqref{cond:A10} for $\K=\R$: With $\mathbf{A}$ from \eqref{equ:ComplexQuinticCubicGinzburgLandauEquationRealVersion} 
and $\mathbf{f}$ from \eqref{equ:NonlinearityGLRealVersion}, Assumption \eqref{cond:A1} is satisfied for every $\alpha\in\C$, since 
\begin{align}
  \label{equ:Diagonalization}
  Y^{-1}\mathbf{A}Y = \Lambda_{\mathbf{A}},\quad 
  \Lambda_{\mathbf{A}}=\begin{pmatrix} \alpha & 0 \\ 0 & \overline{\alpha}\end{pmatrix},\quad
  Y=\begin{pmatrix} i & 1 \\ 1 & i\end{pmatrix},\quad
  Y^{-1}=\frac{1}{2}\begin{pmatrix} -i & 1 \\ 1 & -i\end{pmatrix}.
\end{align}
Assumption \eqref{cond:A2} follows from $\Re\alpha>0$, since $\sigma(\mathbf{A})=\{\alpha,\overline{\alpha}\}$. Assumption \eqref{cond:A3} holds with $\beta_{\mathbf{A}}=\Re\alpha$ if $\Re\alpha>0$, 
since $\Re\left\langle w,\mathbf{A}w\right\rangle = \Re\alpha$ for $w\in\R^2$ with $|w|=1$. Condition \eqref{cond:A4}, which is equivalent to \eqref{cond:A4DC},
requires $\alpha\neq 0$ and $\frac{\Re\alpha}{|\alpha|}=\mu_1(\mathbf{A})>\frac{|p-2|}{p}$.
The latter condition is equivalent to
\begin{align*}
  \left|\arg\alpha\right|<\arctan\left(\frac{2\sqrt{p-1}}{\left|p-2\right|}\right)\text{ for some $1<p<\infty$,}
\end{align*}
or alternatively to
\begin{align}
  \label{equ:PGeneralBoundGinzburgLandau}
  p_{\mathrm{min}}:=\frac{2|\alpha|}{|\alpha|+\Re\alpha}<p<\frac{2|\alpha|}{|\alpha|-\Re\alpha}=:p_{\mathrm{max}}.
\end{align}
The condition \eqref{cond:A5} is satisfied with $S\in\R^{d,d}$ given by
\begin{align}
  \label{equ:MatrixS}
  S=\begin{pmatrix} 0 & S_{12} \\ -S_{12} & 0\end{pmatrix}\quad\text{and}\quad S=\begin{pmatrix} 0 & S_{12} & S_{13} \\ -S_{12} & 0 & S_{23} \\ -S_{13} & -S_{23} & 0\end{pmatrix}
\end{align}
for $d=2$ and $d=3$, respectively. 
Below we specify the entries $S_{12}$, $S_{13}$, $S_{23}\in\R$ and the point $x_{\star}\in\R^d$, 
that will be the center of rotation if $d=2$ and a support vector of the axis of rotation if $d=3$, cf. \eqref{equ:RotatingWave}. All this information 
come actually from a simulation. First we simulate the original system for some time. Then we switch to the freezing method, which yields the profile 
$v_{\star}$, its center of rotation, and its rotational velocities. For more details on the computation, see \cite[Section 10.3]{Otten2014}. 
Some  general theory and applications of the freezing method may be found in \cite{BeynOttenRottmannMatthes2013, BeynThuemmler2004, BeynThuemmler2007, BeynThuemmler2009, 
Thuemmler2006}. Note that in case $d=2$ we have a clockwise rotation, if $S_{12}>0$, and a counter clockwise rotation, if $S_{12}<0$. 
Assumption \eqref{cond:A6} is obviously satisfied, even with  $\mathbf{f}\in C^{\infty}(\R^2,\R^2)$, since every component of $\mathbf{f}$ is a polynomial.
With  $\mathbf{v_{\infty}}=(0,0)^T$, the assumption \eqref{cond:A7} is satisfied, and for this choice we have
\begin{align*}
  D\mathbf{f}(\mathbf{v_{\infty}})=\begin{pmatrix} \delta_1 & -\delta_2 \\
                                 \delta_2 &  \delta_1\end{pmatrix}.
\end{align*}
Assumption \eqref{cond:A8} holds for the same transformation matrix $Y$ as in \eqref{equ:Diagonalization}. The condition $\Re\delta<0$ implies both, 
Assumption \eqref{cond:A9}  and Assumption \eqref{cond:A10} with $\beta_{\infty}=-\Re\delta$. 

Next we consider assumptions \eqref{cond:A6g}, \eqref{cond:A8g} and \eqref{cond:A10g}: Writing $f$ as $f(u)=g(|u|^2)u$ with
\begin{align*}
  g:\R\rightarrow\C,\quad g(v)=\delta+\beta v+\gamma v^2,
\end{align*}
Assumption \eqref{cond:A6g} is obivously satisfied and we even have $g\in C^{\infty}(\R,\C)$. Assumption \eqref{cond:A8g} is 
satisfied with $g(0)=\delta$ for every $\alpha,\delta\in\C$ and assumption \eqref{cond:A10g} with $\beta_{\infty}=-\Re\delta$ if $\Re\delta<0$. 
The assumptions \eqref{cond:A1}--\eqref{cond:A4} for $A=\alpha\in\C$ lead to the same requirements as in the real-valued case.

Our discussion shows that if we assume 
\begin{align}
  \label{equ:AssumptionsQCGL}
  \Re\alpha>0,\quad \Re\delta<0,\quad p_{\min}= \frac{2|\alpha|}{|\alpha|+\Re\alpha}<p<\frac{2|\alpha|}{|\alpha|-\Re\alpha}= p_{\max},
\end{align}
we can apply Theorem \ref{thm:NonlinearOrnsteinUhlenbeckSteadyState}, Corollary \ref{cor:NonlinearOrnsteinUhlenbeckSteadyStateMoreRegularity} 
and Corollary \ref{cor:pointwise} to the real-valued system \eqref{equ:ComplexQuinticCubicGinzburgLandauEquationRealVersion}, and Corollary 
\ref{cor:NonlinearOrnsteinUhlenbeckSteadyStateComplexVersion} to the complex-valued equation \eqref{equ:ComplexQuinticCubicGinzburgLandauEquation}. 
In both cases, the bound for the rate of the exponential decay reads
\begin{align}
  \label{equ:QCGLBound1}
  0\leqslant \mu \leqslant \varepsilon \frac{\nu}{p},\quad \text{for } \nu=\frac{\sqrt{\Re\alpha\left(-\Re\delta\right)}}{|\alpha| }\text{ and some $0<\varepsilon<1$,}
\end{align}
since $\azero=\Re\alpha$, $\bzero=-\Re\delta$ and $\amax=|\alpha|$.

\subsection{Spinning solitons}
\label{subsec:6.1}
For the parameter values from \cite{CrasovanMalomedMihalache2001}, given by
\begin{align}
  \label{equ:ParameterValuesQCGLNr1}
  \alpha=\frac{1}{2}+\frac{1}{2}i,\quad\beta=\frac{5}{2}+i,\quad\gamma=-1-\frac{1}{10}i,\quad\delta=-\frac{1}{2},
\end{align}
equation \eqref{equ:ComplexQuinticCubicGinzburgLandauEquation} exhibits so called \begriff{spinning soliton} solutions for space dimensions $d=2$ and $d=3$, 
see Figure \ref{fig:QCGLSpinningSolitonProfile}. The parameter values \eqref{equ:ParameterValuesQCGLNr1} satisfy the requirements from \eqref{equ:AssumptionsQCGL}, 
and therefore our assumptions \eqref{cond:A1}--\eqref{cond:A10} for every $p$ with
\begin{align}
  \label{equ:PBoundGinzburgLandau}
  1.1716\approx\frac{4}{2+\sqrt{2}}=p_{\min}<p<p_{\max}=\frac{4}{2-\sqrt{2}}\approx 6.8284,
\end{align}
e.g. for $p=2,3,4,5,6$. Therefore, the solitons (and their derivatives) are exponentially localized in the sense of Theorem \ref{thm:NonlinearOrnsteinUhlenbeckSteadyState} and 
Corollary \ref{cor:NonlinearOrnsteinUhlenbeckSteadyStateMoreRegularity}, i.e. $\mathbf{v_{\star}}$ belongs to $W^{2,p}_{\theta}(\R^d,\R^2)$ for $p\in (p_{\min},p_{\max})$ and for the  
weight function $\theta(x)=\exp\left(\mu\sqrt{|x|^2+1}\right)$ with exponential decay rate 
\begin{align}
  \label{equ:muBoundGinzburgLandau}
  0\leqslant \mu \leqslant \frac{\varepsilon}{\sqrt{2} p}.
\end{align}
Corollary \ref{cor:NonlinearOrnsteinUhlenbeckSteadyStateMoreRegularity} implies that $\mathbf{v_{\star}}$ even belongs to $W^{k,p}_{\theta}(\R^d,\R^2)$
for every $k \geqslant 0$, 
provided $d=2$, $p\in (p_{\min},p_{\max})$ or $d=3$, $p\in [\frac{3}{2},p_{\max})$ since $\mathbf{f}\in C^{\infty}(\R^2,\R^2)$. Moreover, Corollary \ref{cor:pointwise} 
shows that the solitons satisy the pointwise estimates \eqref{equ:PointwiseEstimateVstar}. In Section \ref{subsec:6.3}  we will compare
this with the rate of decay measured from numerical experiments. 

Next, we discuss the numerical results from Figure \ref{fig:QCGLSpinningSolitonProfile} and explain how to compute the profile and velocities numerically. 
For all numerical computations including the eigenvalue computations we used Comsol Multiphysics 5.2, \cite{ComsolMultiphysics52}.

\begin{figure}[ht]
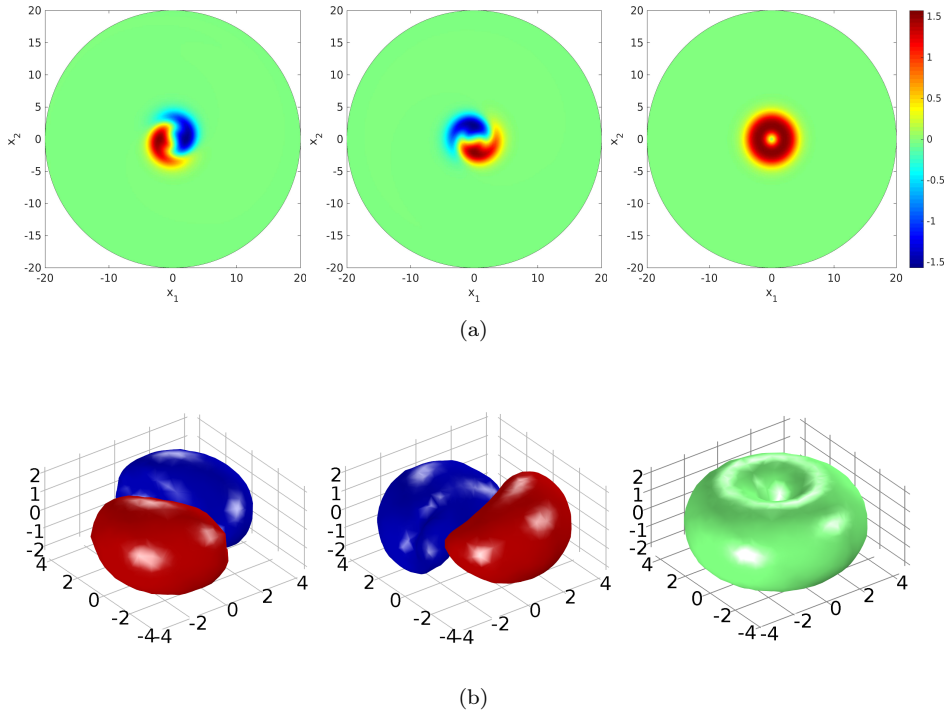

  \centering
  \subfigure[]{\includegraphics[page=6,height=4.0cm] {Images.pdf} \label{fig:QCGLSpinningSoliton2DProfile}}\\
  \subfigure[]{\includegraphics[page=7,height=4.0cm] {Images.pdf} \label{fig:QCGLSpinningSoliton3DProfile}}
  \caption{Spinning soliton of QCGL \eqref{equ:ComplexQuinticCubicGinzburgLandauEquation} for $d=2$ (a) and $d=3$ (b) with real part (left), imaginary part (middle) 
  and absolute value (right). The colorbar in (a) reaches from $-1.6$ (blue) to $1.6$ (red). The isosurfaces in (b) have values $-0.5$ (blue), $0.5$ (red) and $0.5$ (green).}
  \label{fig:QCGLSpinningSolitonProfile}
\end{figure}

Figure \ref{fig:QCGLSpinningSolitonProfile}(a) shows the spinning soliton in $\R^2$ as the solution of 
\eqref{equ:ComplexQuinticCubicGinzburgLandauEquation} on a circular disk of radius $R=20$ centered at the origin at time $t=150$. For the computation 
we used continuous piecewise linear finite elements with maximal stepsize $\triangle x=0.25$, the BDF method of order $2$ with absolute tolerance 
$\mathrm{atol}=10^{-5}$, relative tolerance $\mathrm{rtol}=10^{-3}$ and maximal stepsize $\triangle t=0.1$, homogeneous Neumann boundary conditions 
and initial data
\begin{align*}
  u_0^{2D}(x_1,x_2)=\frac{1}{5}\left(x_1+ix_2\right)\exp\left(-\frac{x_1^2+x_2^2}{49}\right), \quad x_1^2 + x_2^2 \le R^2.
\end{align*}
  
Figure \ref{fig:QCGLSpinningSolitonProfile}(b) shows isosurfaces of the spinning soliton in $\R^3$ obtained from the solution of 
\eqref{equ:ComplexQuinticCubicGinzburgLandauEquation} on a cube 
$[-10,10]^3$ at time $t=100$. For the computation we 
used continuous piecewise linear finite elements with maximal stepsize $\triangle x=0.8$, the BDF method of order $2$ with absolute tolerance $\mathrm{atol}=10^{-4}$, 
relative tolerance $\mathrm{rtol}=10^{-2}$ and maximal stepsize $\triangle t=0.1$, homogeneous Neumann boundary conditions and (discontinuous) initial data
\begin{align*}
  u_0^{3D}(x_1,x_2,x_3)=u_0^{2D}(x_1,x_2)\text{ for $|x_3|<9$ and $0$ otherwise}.
\end{align*}

Using these solutions as initial data, the freezing method from \cite{BeynOttenRottmannMatthes2013,BeynThuemmler2004}
provides an approximate rotating wave with profile $\mathbf{w}_{\star}$
 in the following format
 \begin{equation} \label{equ:freezingformat}
\mathbf{u}_{\star}(x,t) =\mathbf{w}_{\star}(\exp(-tS)(x- tE(tS) \tau)),\quad x \in \R^d,\;t\in \R,
\end{equation}
where $S\in \R^{d,d}$ is skew symmetric, $\tau \in \mathrm{range}(S) \subset \R^d$ and $E$ is 
the analytic function  $E(z) = \sum_{j=1}^{\infty}\frac{z^{j-1}}{j!}$ satisfying
$E(z)z=\exp(z)-1, z \in \C$. In order to put this into the standard
form \eqref{equ:RotatingWave} used in our theory, we determine the position
vector $x_{\star}\in \R^d$ by solving
\begin{equation} \label{equ:shiftwave}
Sx_{\star} + \tau = 0.
\end{equation}
Then we may write $\exp(-tS)(x- tE(tS) \tau)=
\exp(-tS)(x+ E(tS)(tS)x_{\star})= \exp(-tS)(x-x_{\star})+ x_{\star}$, so that
\eqref{equ:freezingformat} turns into
\begin{equation} \label{equ:freezingformat2}
\mathbf{u}_{\star}(x,t) =\mathbf{w}_{\star}(\exp(-tS)(x-x_{\star})+x_{\star})
= \mathbf{v}_{\star}(\exp(-tS)(x-x_{\star})).
\end{equation}
Thus the numerical profile $\mathbf{w}_{\star}$ is a slightly shifted version
of the profile $\mathbf{v}_{\star}$ used for the theory. In practice, we solve
\eqref{equ:shiftwave} directly for $d=2$ and by rank-deficient least squares for $d=3$, see
\cite[Example 10.8]{Otten2014} for details. 


Using for $d=2$ the data at time $t=400$, one obtains the following values for $S_{12}$, $\tau$ and the center of rotation $x_{\star}^{2D}$ 
of the spinning soliton
\begin{align}
  \label{equ:velocities2D}
  S_{12}=1.0286,\quad \tau=\begin{pmatrix}-0.0054\\-0.0071\end{pmatrix},\quad 
  x_{\star}^{2D} := -S^{-1}\tau = \frac{1}{S_{12}}\begin{pmatrix}\tau_2\\-\tau_1\end{pmatrix} = \begin{pmatrix}-0.0069\\0.0052\end{pmatrix}.
\end{align}
The rotating wave $\mathbf{u_{\star}}:\R^2\times[0,\infty)\rightarrow\R^2$ satisfies
\begin{align*}
  \mathbf{u_{\star}}(x,t) = \mathbf{w_{\star}}\left(e^{-t S}(x-x_{\star}^{2D})+x_{\star}^{2D}\right) 
  = \mathbf{v_{\star}}\left(e^{-t S}(x-x_{\star}^{2D})\right).
\end{align*}

In case $d=3$ at time $t=500$ the rotational velocities $S_{12},S_{13},S_{23}$ and the translational vector $\tau$ of the spinning soliton are
found to be
\begin{align}
  \label{equ:velocities3D}
  \begin{pmatrix}S_{12}\\S_{13}\\S_{23}\end{pmatrix}=\begin{pmatrix}0.6888\\-0.0043\\-0.0043\end{pmatrix},\quad
  \tau=\begin{pmatrix}0.0023\\-0.0415\\0.0005\end{pmatrix}.
\end{align}
The axis of rotation is $\{ x_{\star}^{3D} + r x_{\mathrm{rot}}^{3D}, r \in \R\}$ with 
support vector $x_{\star}^{3D}$ and the direction 
$x_{\mathrm{rot}}^{3D}$  spanning the null space of $S$, is given by
\begin{align*}
  &x_{\mathrm{rot}}^{3D} := \begin{pmatrix}S_{23}\\-S_{13}\\S_{12}\end{pmatrix} = \begin{pmatrix}-0.0043\\0.0043\\0.6888\end{pmatrix},\quad
  x_{\star}^{3D} := \frac{1}{S_{12}^2+S_{13}^2+S_{23}^2}\begin{pmatrix}S_{12}\tau_2+S_{13}\tau_3\\-S_{12}\tau_1+S_{23}\tau_3\\-S_{13}\tau_1-S_{23}\tau_2\end{pmatrix}
        = \begin{pmatrix}-0.0602\\-0.0033\\-0.0004\end{pmatrix}.
\end{align*}

As above, the rotating wave 
$\mathbf{u_{\star}}:\R^3\times[0,\infty)\rightarrow\R^2$ satisfies
\begin{align*}
  \mathbf{u_{\star}}(x,t) = \mathbf{w_{\star}}\left(e^{-t S}(x-x_{\star}^{3D})+x_{\star}^{3D}\right) 
  = \mathbf{v_{\star}}\left(e^{-t S}(x-x_{\star}^{3D})\right).
\end{align*}
The  periods of rotation for the spinning solitons in $\R^2$ and $\R^3$ are
determined by
\begin{align*}
 T^{2D}=\frac{2\pi}{|S_{12}|}=6.1085\quad\text{and}\quad T^{3D}=\frac{2\pi}{\left|\sqrt{S_{12}^2+S_{13}^2+S_{23}^2}\right|}=9.1216. 
\end{align*}

\subsection{Spectrum and eigenfunctions at spinning solitons}
\label{subsec:6.2}
We now consider the eigenvalue problem for the real-valued version of the
QCGL-equation, cf. \eqref{equ:ComplexQuinticCubicGinzburgLandauEquationRealVersion},
\begin{align}
  \label{equ:ComplexQuinticCubicGinzburgLandauEquationEigenvalueProblem}
  \L \mathbf{v}(x)=\mathbf{A}\triangle \mathbf{v}(x) + \left\langle Sx,\nabla \mathbf{v}(x)\right\rangle 
  +D\mathbf{f}\left(\mathbf{v_{\star}}(x)\right)\mathbf{v}(x) = \lambda \mathbf{v}(x),\,x\in\R^d,\,d\in\{2,3\},
\end{align}
with $\mathbf{v}:\R^d\rightarrow\C^2$, $\mathbf{A}\in\R^{2,2}$ from \eqref{equ:ComplexQuinticCubicGinzburgLandauEquationRealVersion}, 
$\mathbf{f}:\R^2\rightarrow\R^2$ from \eqref{equ:NonlinearityGLRealVersion} and $S\in\R^{d,d}$ from \eqref{equ:MatrixS}. 

Recall from Section \ref{subsec:6.1}  that the Ginzburg-Landau equation exhibits
spinning soliton solutions for space dimensions $d=2$ and $d=3$
and for the parameter values from \eqref{equ:ParameterValuesQCGLNr1}.

Below we approximate solutions $(\lambda,\mathbf{v})$ 
of the eigenvalue problem \eqref{equ:ComplexQuinticCubicGinzburgLandauEquationEigenvalueProblem} and apply Theorem \ref{thm:LinearizedOrnsteinUhlenbeckExponentialDecayInLp} 
and Corollary \ref{cor:ExponentialDecayOfEigenfunctions} to \eqref{equ:ComplexQuinticCubicGinzburgLandauEquationEigenvalueProblem}. 
Instead of \eqref{equ:ComplexQuinticCubicGinzburgLandauEquationEigenvalueProblem}  we solve the eigenvalue problem  
\begin{align}
  \label{equ:ComplexQuinticCubicGinzburgLandauEquationEigenvalueProblemNumerically}
  \mathbf{A}\triangle \mathbf{w}(x) + \left\langle S(x-x_{\star}),\nabla \mathbf{w}(x)\right\rangle 
  +D\mathbf{f}\left(\mathbf{w_{\star}}(x)\right)\mathbf{w}(x) = \lambda \mathbf{w}(x),\,x\in\R^d,\,d\in\{2,3\},
\end{align}
with $S\in\R^{d,d}$ and $\tau\in\R^d$ from \eqref{equ:velocities2D} and \eqref{equ:velocities3D}.
Note that \eqref{equ:ComplexQuinticCubicGinzburgLandauEquationEigenvalueProblemNumerically} is just the shifted version of \eqref{equ:ComplexQuinticCubicGinzburgLandauEquationEigenvalueProblem} 
where $\mathbf{w}(x)=\mathbf{v}(x-x_{\star}),x\in \R^d$. Hence the 
eigenvalues are the same.

We use the following values 
 \begin{align}
  \label{equ:ParametersSpectrumQCGLSpinningSolitons}
    &\mathbf{v_{\infty}}=\begin{pmatrix}0\\0\end{pmatrix},\quad \sigma\left(D\mathbf{f}(\mathbf{v_{\infty}})\right)=\left\{\delta,\bar{\delta}\right\}=\left\{-\frac{1}{2}\right\},\quad
    \bzero = -s\left(D\mathbf{f}(\mathbf{v_{\infty}})\right) = -\Re\delta = \frac{1}{2}.
\end{align}
The spectrum of $S\in\R^{d,d}$ is given by
\begin{align}
  \label{equ:QCGLSpinningSoliton2DEssentialSpectrumParameters}
  d=2:&\quad\sigma(S)=\left\{\pm\sigma_1 i\right\},\quad \sigma_1=S_{12}=1.0286, \\
  \label{equ:QCGLSpinningSoliton3DEssentialSpectrumParameters}
  d=3:&\quad\sigma(S)=\left\{0,\pm\sigma_1 i\right\},\quad \sigma_1=\sqrt{S_{12}^2+S_{13}^2+S_{23}^2}=0.6888.
\end{align}
For the solution of the eigenvalue problem \eqref{equ:ComplexQuinticCubicGinzburgLandauEquationEigenvalueProblemNumerically} we use in both cases, $d=2$ and $d=3$, 
continuous piecewise linear finite elements with maximal stepsize $\triangle x=0.25$ (if $d=2$) and $\triangle x=0.8$ (if $d=3$), homogeneous Neumann boundary 
conditions and the following parameters for the eigenvalue solver
\begin{align}
  \label{equ:FemeigSolverSettingsQCGLSpinningSoliton}
  \textrm{neigs}=800,\quad \sigma=-1,\quad \textrm{etol}=10^{-7},
\end{align}
i.e. we approximate $\textrm{neigs}=800$ eigenvalues that are located nearest to $\sigma=-1$  and satisfy the eigenvalue tolerance $\textrm{etol}=10^{-7}$.
As above, the profile $\mathbf{w_{\star}}$ and the pair $(S,x_{\star})$ in \eqref{equ:ComplexQuinticCubicGinzburgLandauEquationEigenvalueProblemNumerically} are
obtained from simulating the freezing system until $t=400$ for $d=2$  and
until $t=500$ for $d=3$. With the data from the last time instance we then solve
the eigenvalue problem \eqref{equ:ComplexQuinticCubicGinzburgLandauEquationEigenvalueProblemNumerically}.

\begin{figure}[ht]
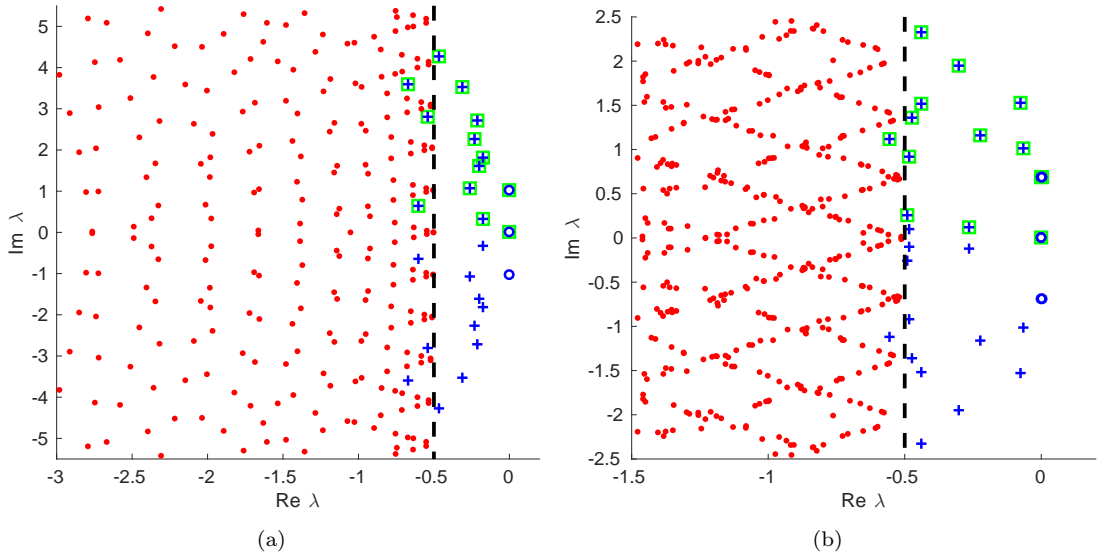

  \centering
  \subfigure[]{\includegraphics[page=8,width=0.48\textwidth] {Images.pdf}       \label{fig:QCGLSpinningSoliton2DSpectrum}}
  \subfigure[]{\includegraphics[page=9,width=0.48\textwidth] {Images.pdf}       \label{fig:QCGLSpinningSoliton3DSpectrum}}
  \caption{Numerical spectra of QCGL \eqref{equ:ComplexQuinticCubicGinzburgLandauEquation} linearized about a spinning soliton for $d=2$ (a), and $d=3$ (b).}
  \label{fig:QCGLSpinningSolitonSpectrum}
\end{figure} 

Figure \ref{fig:QCGLSpinningSolitonSpectrum} shows the approximation $\sigma^{\mathrm{approx}}(\L)$ of the spectrum $\sigma(\L)=\sigma_{\mathrm{point}}(\L)\cup \sigma_{\mathrm{ess}}(\L)$ of $\L$
obtained by linearizing about the spinning 
soliton $\mathbf{v_{\star}}$ for $d=2$ (a), and $d=3$ (b). Let us discuss the numerical spectra in more detail and compare them with our theoretical results:

\textbf{Essential spectrum:} 
We replace $\mathbf{v}_{\star}$ in $\L$ by its limiting
value zero and find a dispersion relation \eqref{equ:DispersionRelation}, leading to 
\begin{align}
  \label{equ:EssentialSpectrumQCGLSpinningSolitons}
  \sigma_{\mathrm{ess}}^{\mathrm{part}}(\L)
  =\left\{\lambda=-\omega^2\alpha_1 + \delta_1 + i\left(\mp 
\omega^2\alpha_2\pm\delta_2-n\sigma_1\right)\mid \omega\in\R,\;n\in\Z\right\} \subseteq \sigma_{\mathrm{ess}}(\L)
\end{align}
with $\sigma_1$ from \eqref{equ:QCGLSpinningSoliton2DEssentialSpectrumParameters} for $d=2$ and from \eqref{equ:QCGLSpinningSoliton3DEssentialSpectrumParameters} 
for $d=3$, cf. Remark \ref{rem:PointEssential} and \cite[Theorem 9.10]{Otten2014}. 
 Taking the parameter values 
\eqref{equ:ParameterValuesQCGLNr1} into account, \eqref{equ:EssentialSpectrumQCGLSpinningSolitons} reads as
\begin{align*}
  \sigma_{\mathrm{ess}}^{\mathrm{part}}(\L)
  =\left\{\lambda=-\frac{1}{2}(\omega^2+1)+i\left(\mp \frac{1}{2}\omega^2-n\sigma_1\right)\mid \omega\in\R,\;n\in\Z\right\} \subseteq \sigma_{\mathrm{ess}}(\L)
\end{align*}
In both cases the part $\sigma_{\mathrm{ess}}^{\mathrm{part}}(\L)$ of the essential spectrum forms a zig-zag-structure that can be considered as 
the union of infinitely many copies of cones. The tips of the cones  $-\frac{1}{2}-in\sigma_1$, $n\in\Z$ lie on the line 
$\delta_1+i\R=-\frac{1}{2}+i\R$. Therefore, the distance between two neighboring tips equals $\sigma_1$. The gap between 
the whole essential spectrum and the imaginary axis equals $\bzero=\frac{1}{2}$, since $\Re\sigma_{\mathrm{ess}}(\L)\leqslant-\bzero=\Re\delta=-\frac{1}{2}$. 
The inclusion $\sigma_{\mathrm{ess}}^{\mathrm{part}}(\L)\subseteq\sigma_{\mathrm{ess}}(\L)$ is proved in \cite[Theorem 9.10]{Otten2014a} for the 
$L^p$-spectrum of $\L$. We believe that even equality holds, i.e. $\sigma_{\mathrm{ess}}^{\mathrm{part}}(\L)=\sigma_{\mathrm{ess}}(\L)$, but this has 
not been proved so far.

Let us now consider the numerical results: The red dots in Figure \ref{fig:QCGLSpinningSolitonSpectrum} represent the approximation 
$\sigma_{\mathrm{ess}}^{\mathrm{approx}}(\L)$ of the essential spectrum $\sigma_{\mathrm{ess}}(\L)$. They approximate the collection of cones in  the essential spectrum. As expected, the tips are approximatively located on $-\frac{1}{2}+i\R$, indicated by the black dashed line. 
Our results show that the distance between two neighboring tips of the cones agrees with $\sigma_1$  from \eqref{equ:QCGLSpinningSoliton2DEssentialSpectrumParameters} 
for $d=2$ and from \eqref{equ:QCGLSpinningSoliton3DEssentialSpectrumParameters} for $d=3$. In particular, the approximation suggests that we have  
equality in \eqref{equ:EssentialSpectrumQCGLSpinningSolitons}.
 The case $d=2$ has been also treated in \cite[Section 8]{BeynLorenz2008}. 

\textbf{Point spectrum:} From Theorem \ref{thm:EigenfunctionsOfTheLinearizedOrnsteinUhlenbeckInLp} and Remark \ref{rem:PointEssential}
we have the relation
\begin{align}
  \label{equ:PointSpectrumQCGLSpinningSolitons}
  \sigma_{\mathrm{point}}^{\mathrm{part}}(\L) = \{0,\pm i\sigma_1\} \subseteq \sigma_{\mathrm{point}}(\L)
\end{align}
with $\sigma_1$ from \eqref{equ:QCGLSpinningSoliton2DEssentialSpectrumParameters} for $d=2$ and \eqref{equ:QCGLSpinningSoliton3DEssentialSpectrumParameters} 
for $d=3$ (cf. \cite[Theorem 9.4]{Otten2014} for 
more details). The eigenvalues $0,\pm i\sigma_1$ are located on the imaginary axis and have (at least) algebraic multiplicities $1$ for $d=2$ and $2$ for $d=3$, respectively, 
see Theorem \ref{thm:EigenfunctionsOfTheLinearizedOrnsteinUhlenbeckInLp}. 
The numerical results below suggest that we do not have equality in \eqref{equ:PointSpectrumQCGLSpinningSolitons}, i.e. in general there are further isolated 
eigenvalues which have to be determined numerically.

Let us now consider the associated numerical results: The blue circles and the blue plus signs in Figure \ref{fig:QCGLSpinningSolitonSpectrum} represent an 
approximation $\sigma_{\mathrm{point}}^{\mathrm{approx}}(\L)$ of the point spectrum $\sigma_{\mathrm{point}}(\L)$. The approximate eigenvalues 
$0,\pm i\sigma_1$ are visualized by  blue circles. The plus signs indicate further isolated eigenvalues which belong to the point spectrum, 
but cannot be determined explicitly. In case $d=2$, there are $11$
 additional complex-conjugate pairs of isolated eigenvalues, of which 
$8$ pairs are located to the right of the vertical black dashed line $-\frac{1}{2}+i\R$ and $3$ pairs to the left in between the zig-zags.  We emphasize
that these three pairs are not numerical artifacts but are robust to spatial mesh refinement and to increase of spatial domain.
Similarly, in case $d=3$, we find $12$ additonal complex-conjugate pairs of isolated eigenvalues, of which $11$ pairs are located to the right of the vertical 
line $-\frac{1}{2}+i\R$ and $1$ pair to the left in between the zig-zags. Numerical values of the isolated eigenvalues are given in Table \ref{tab:DecayRatesSpinningSoliton} 
below.  

\begin{figure}[ht]
  \centering
  \includegraphics[page=10,width=0.99\textwidth] {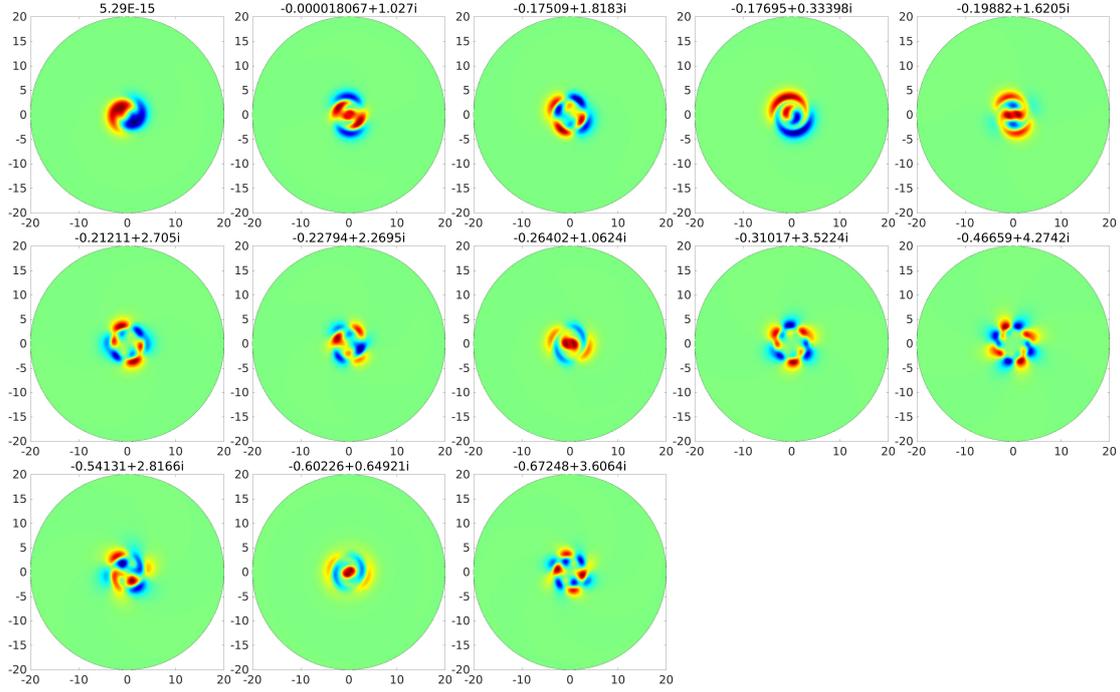}
  \caption{Real parts of eigenfunctions of $2D$-QCGL \eqref{equ:ComplexQuinticCubicGinzburgLandauEquation} for a spinning soliton.}
  \label{fig:QCGLSpinningSoliton2DEigenfunctions}
\end{figure} 

\begin{figure}[ht]
  \centering
  \includegraphics[page=11,width=0.99\textwidth] {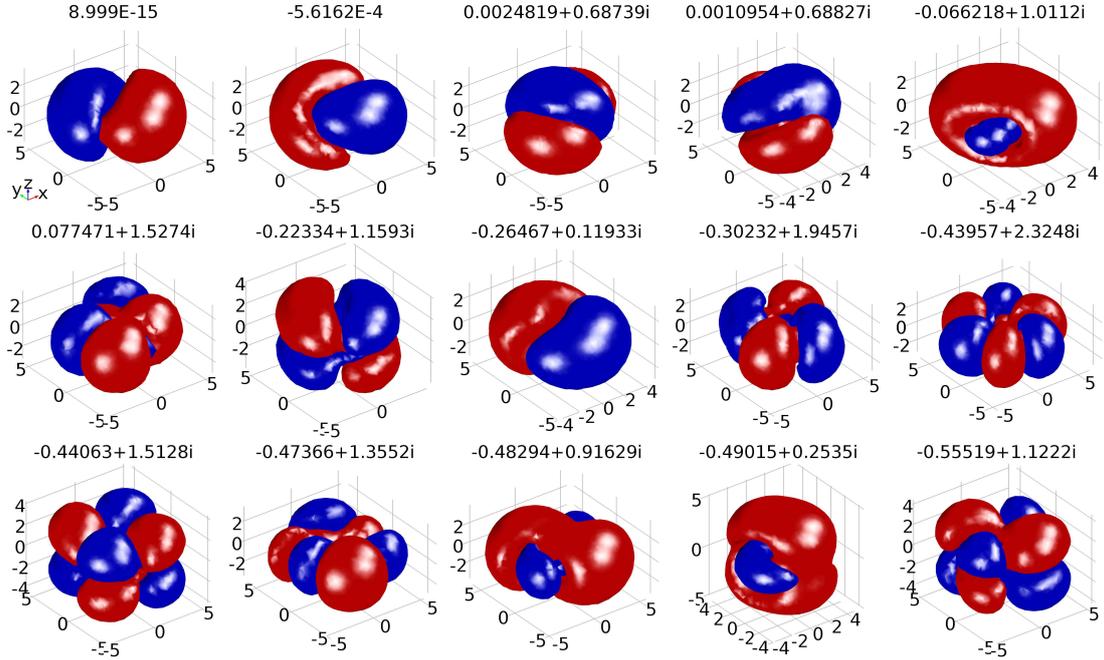}
  \caption{Isosurfaces of real parts of eigenfunctions of $3D$-QCGL \eqref{equ:ComplexQuinticCubicGinzburgLandauEquation} for a spinning soliton.}
  \label{fig:QCGLSpinningSoliton3DEigenfunctions}
\end{figure}

\textbf{Eigenfunctions:} The eigenfunctions associated to eigenvalues from the essential spectrum are explicitly known and bounded but  
never localized, i.e. they do not decay in space, see \cite[Theorem 7.9 and 9.10]{Otten2014}. In contrast to this, 
all eigenfunctions associated to eigenvalues from the point spectrum, in particular those on the imaginary axis, are 
exponentially localized. For eigenvalues $\Re\lambda>-\beta_{\infty}=-\frac{1}{2}$ this follows from our theory. To be more precise, 
let us introduce angular derivatives by 
\begin{align*}
  D^{(1,2)}:=x_2 D_1-x_1 D_2,\quad D^{(1,3)}:=x_3 D_1-x_1 D_3,\quad D^{(2,3)}:=x_3 D_2-x_2 D_3.
\end{align*}
Then Theorem \ref{thm:EigenfunctionsOfTheLinearizedOrnsteinUhlenbeckInLp} asserts  eigenfunctions associated to eigenvalues from 
$\sigma_{\mathrm{point}}^{\mathrm{part}}(\L)$ as follows
\begin{equation}
  \begin{aligned}
  \label{equ:Eigenfunctions2D}
  \begin{split}
    \lambda_1     &= 0,          &&\mathbf{v_1}=D^{(1,2)}\mathbf{v_{\star}},\\
    \lambda_{2,3} &= \pm i\sigma_1,  &&\mathbf{v_{2,3}}=D_1\mathbf{v_{\star}}\pm iD_2 \mathbf{v_{\star}}
  \end{split}
  \end{aligned}
\end{equation}
for $d=2$, see \cite[Example 9.6]{Otten2014}, and by
\begin{equation}
  \begin{aligned}
  \label{equ:Eigenfunctions3D}
  \begin{split}
    \lambda_1 &= 0,          &&\mathbf{v_1}=S_{12}D^{(1,2)}\mathbf{v_{\star}} + S_{13}D^{(1,3)}\mathbf{v_{\star}} + S_{23}D^{(2,3)}\mathbf{v_{\star}},\\
    \lambda_2 &= 0,          &&\mathbf{v_2}=S_{23}D_1 \mathbf{v_{\star}} - S_{13}D_2 \mathbf{v_{\star}} + S_{12}D_3 \mathbf{v_{\star}},\\
    \lambda_{3,4} &= \pm i\sigma_1,  &&\mathbf{v_{3,4}}=(\sigma_1 S_{13}\pm iS_{12}S_{23})D_1 \mathbf{v_{\star}} 
                                      + (\sigma_1 S_{23}\pm iS_{12}S_{13})D_2 \mathbf{v_{\star}} \\
                &&&\qquad\quad        \pm i(S_{13}^2+S_{23}^2)D_3 \mathbf{v_{\star}}, \\
    \lambda_{5,6} &= \pm i\sigma_1,  &&\mathbf{v_{5,6}}=-(S_{13}^2+S_{23}^2)D^{(1,2)}\mathbf{v_{\star}}-(-S_{12}S_{13}\pm i\sigma_1 S_{23})D^{(1,3)}\mathbf{v_{\star}} \\
                &&&\qquad\quad        +(S_{12}S_{23}\pm i\sigma_1 S_{13})D^{(2,3)}\mathbf{v_{\star}}
  \end{split}
  \end{aligned}
\end{equation}
for $d=3$, see \cite[Example 9.7]{Otten2014}. We next study the asymptotic behavior of eigenfunctions with eigenvalues in $\sigma_{\mathrm{point}}(\L)$.
As shown in the previous section,  Theorem \ref{thm:LinearizedOrnsteinUhlenbeckExponentialDecayInLp} and Corollary \ref{cor:ExponentialDecayOfEigenfunctions} 
 imply that all eigenfunctions with eigenvalues $\Re\lambda>-\frac{1}{2}$
 are exponentially localized, in the $L^p$- and in the pointwise sense. The maximal exponential rate of decay for the eigenfunctions 
will depend on $\lambda$ as we will see in Section \ref{subsec:6.3} below. Note that Theorem \ref{thm:LinearizedOrnsteinUhlenbeckExponentialDecayInLp} 
does not apply to eigenvalues satisfying $\Re\lambda \leqslant -\frac{1}{2}$.

Let us now discuss the numerical results: In Figure \ref{fig:QCGLSpinningSolitonSpectrum}, there are some isolated eigenvalues labeled by a green square. 
Their eigenfunctions are visualized in Figure \ref{fig:QCGLSpinningSoliton2DEigenfunctions} for $d=2$ and in Figure \ref{fig:QCGLSpinningSoliton3DEigenfunctions} 
for $d=3$. Both pictures show the real parts of the first component of the associated eigenfunction $\mathbf{w}:\R^d\rightarrow\C^2$. 
The first two eigenfunctions in Figure \ref{fig:QCGLSpinningSoliton2DEigenfunctions} are approximations of $\mathbf{v_1},\mathbf{v_2}$ from \eqref{equ:Eigenfunctions2D}. 
Their corresponding eigenvalues approximate $\lambda_1,\lambda_2$ from \eqref{equ:Eigenfunctions2D}, as specified in the title of the figure. 
Similarly, the first four eigenfunctions in Figure \ref{fig:QCGLSpinningSoliton3DEigenfunctions} approximate  $\mathbf{v_1},\mathbf{v_2},\mathbf{v_3},\mathbf{v_5}$ 
from \eqref{equ:Eigenfunctions3D}. Their associated eigenvalues are approximations of $\lambda_1,\lambda_2,\lambda_3,\lambda_5$ from \eqref{equ:Eigenfunctions3D} and 
again specified in the title. Note that the first eigenfunction in Figure \ref{fig:QCGLSpinningSoliton2DEigenfunctions} and in Figure 
\ref{fig:QCGLSpinningSoliton3DEigenfunctions} agrees with a slightly shifted
version of the rotational term $\mathbf{v_1}(x)=\left\langle Sx,\nabla \mathbf{v}_{\star}(x)\right\rangle$ 
which arises in the rotating wave equation \eqref{equ:NonlinearSteadyStateProblem2}. The eigenfunctions $3-10$ from Figure \ref{fig:QCGLSpinningSoliton2DEigenfunctions} 
and $5-14$ from Figure \ref{fig:QCGLSpinningSoliton3DEigenfunctions} belong to the eigenvalues in green boxes carrying a plus sign and satisfying $\Re\lambda>-\frac{1}{2}$. 
They are ordered with decaying real parts.
All eigenfunctions with eigenvalues satisfying  $\Re\lambda>-\frac{1}{2}$ seem to decay exponentially, as expected by Theorem \ref{thm:LinearizedOrnsteinUhlenbeckExponentialDecayInLp}.
The last three eigenfunctions in Figure \ref{fig:QCGLSpinningSoliton2DEigenfunctions} and the last eigenfunction in Figure \ref{fig:QCGLSpinningSoliton3DEigenfunctions} 
show those eigenfunctions whose  eigenvalues are marked by a green box but satisfy $\Re\lambda\leqslant-\frac{1}{2}$. In this case Theorem \ref{thm:LinearizedOrnsteinUhlenbeckExponentialDecayInLp} 
is not applicable. However, even these eigenfunctions seem to have exponential decay in space. Finally, we note that we found further isolated eigenvalues inside 
the zig-zag structure, see Figure \ref{fig:QCGLSpinningSolitonSpectrum}(b), the eigenfunctions of which seem to decay exponentially in space as well. 

\subsection{Rate of exponential decay for spinning solitons and their eigenfunctions}
\label{subsec:6.3}
Let us consider the rates of exponential decay for spinning solitons and their associated eigenfunctions  in more detail. For this purpose 
we compare theoretical decay rates (short: TDR),  guaranteed by Theorem \ref{thm:NonlinearOrnsteinUhlenbeckSteadyState} and \ref{thm:LinearizedOrnsteinUhlenbeckExponentialDecayInLp}, 
with numerical decay rates (short: NDR) computed from our numerical results by linear regression.

\begin{figure}[ht]
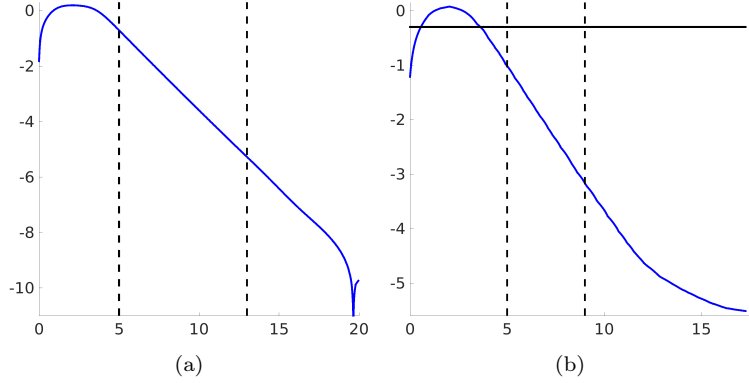

  \centering
  \subfigure[]{\includegraphics[page=12,height=4.5cm] {Images.pdf}       \label{fig:QCGLSpinningSoliton2DProfileDecay}}
  \subfigure[]{\includegraphics[page=13,height=4.5cm] {Images.pdf}       \label{fig:QCGLSpinningSoliton3DProfileDecay}}
  \caption{Numerical exponential decay rate of the spinning soliton profiles for $d=2$ (a) and $d=3$ (b) arising in the QCGL \eqref{equ:ComplexQuinticCubicGinzburgLandauEquationRealVersion}. 
  The black line indicates the level $0.5$ use for the isosurfaces in Figure \ref{fig:QCGLSpinningSolitonProfile}(b).}
  \label{fig:QCGLSpinningSolitonDecay}
\end{figure} 

\textbf{Decay rates of spinning solitons:} The maximal rate of exponential decay for the profiles of the spinning solitons, which one obtains from 
Corollary \ref{cor:pointwise}, is given by, cf. \eqref{equ:QCGLBound1}, \eqref{equ:PGeneralBoundGinzburgLandau},
\begin{align}
  \label{equ:GeneralMaximalDecayRateProfile}
 0 \leqslant \mu \leqslant \frac{\varepsilon \nu}{p} < \frac{\nu}{p} =: \mu^{\mathrm{pro}}(p) < \frac{\nu}{\max\left\{p_{\min},\frac{d}{2}\right\}} =: \mu^{\mathrm{pro}}_{\max}. 
\end{align}
Taking the parameter values \eqref{equ:ParameterValuesQCGLNr1} into account, \eqref{equ:GeneralMaximalDecayRateProfile} implies the following upper bounds for the 
theoretical decay rates
\begin{align*}
  \mu^{\mathrm{pro}}(p)=\frac{1}{\sqrt{2}p}\approx\frac{0.7071}{p},\quad\mu_{\max}^{\mathrm{pro}}=\begin{cases}\frac{\sqrt{2}+1}{4}\approx 0.6036&\text{, d=2,}\\\frac{\sqrt{2}}{3}\approx 0.4714&\text{, d=3.}\end{cases}.
\end{align*}

We compare this with the numerical exponential decay rates for the profile: Figure \ref{fig:QCGLSpinningSolitonDecay} shows the absolute value of the spinning soliton profile 
along a  straight line in radial direction, for $d=2$ in (a) and $d=3$ in (b). To be more precise, Figure \ref{fig:QCGLSpinningSolitonDecay} (a)
shows the function
\begin{align}
  \label{equ:Line2D}
  [0,20]\rightarrow\R,\quad r\mapsto \log_{10}\left|\mathbf{w_{\star}}\left(r\cos\frac{\pi}{2},r\sin\frac{\pi}{2}\right)\right|
\end{align}
in case of $d=2$. Similarly, Figure \ref{fig:QCGLSpinningSolitonDecay} (b) shows the function
\begin{align}
  \label{equ:Line3D}
  [0,10\sqrt{3}]\rightarrow\R,\quad r\mapsto \log_{10}\left|\mathbf{w_{\star}}\left(\frac{r}{\sqrt{3}},\frac{r}{\sqrt{3}},\frac{r}{\sqrt{3}}\right)\right|
\end{align}
in case of $d=3$. The functions are almost linear at least in the regions enclosed by the black dashed lines, which are $[5,13]$ for $d=2$ and $[5,9]$ for $d=3$. 
In case $d=2$ the observed NDR is slightly below the TDR. This is attributed to the fact that the NDR is affected by the size of the bounded domain and by the 
choice of boundary conditions. Summarizing, this indicates that the heat kernel estimates from \cite{Otten2014,Otten2014a}, which form the origin of these decay 
rates, are quite accurate.


\begin{figure}[ht]
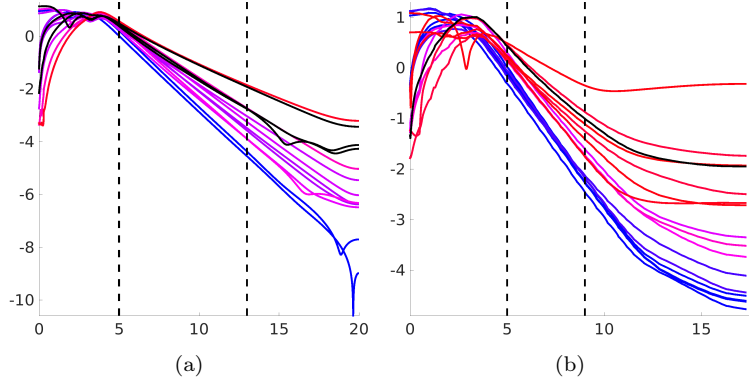

  \centering
  \subfigure[]{\includegraphics[page=14,height=4.5cm] {Images.pdf} \label{fig:QCGLSpinningSoliton2DEigenfunctionsDecay}}
  \subfigure[]{\includegraphics[page=15,height=4.5cm] {Images.pdf} \label{fig:QCGLSpinningSoliton3DEigenfunctionsDecay}}
  \caption{Numerical rate of exponential decay of the eigenfunctions for $d=2$ (a) and $d=3$ (b) of \eqref{equ:ComplexQuinticCubicGinzburgLandauEquationEigenvalueProblemNumerically} linearized 
  at a spinning soliton.}
  \label{fig:QCGLSpinningSolitonDecayEigenfunctions}
\end{figure} 

\textbf{Decay rates of eigenfunctions:} The maximal rate of exponential decay for the eigenfunctions, obtained from Theorem \ref{thm:LinearizedOrnsteinUhlenbeckExponentialDecayInLp}, 
will now depend on $\lambda$, since 
\begin{align*}
  \Re\lambda\geqslant -(1-\varepsilon)\beta_{\infty}=-(1-\varepsilon)(-\Re\delta)\quad\Longleftrightarrow\quad \varepsilon \leqslant \frac{\Re\lambda-\Re\delta}{-\Re\delta}=: \varepsilon(\lambda).
\end{align*}
This gives us the bounds
\begin{align}
  \label{equ:GeneralMaximalDecayRateEigenfunctions}
0 \leqslant \frac{\varepsilon \nu}{p} \leqslant \frac{\varepsilon(\lambda) \nu}{p} =: \mu^{\mathrm{eig}}(p,\lambda)
< \frac{\varepsilon(\lambda) \nu}{\max\left\{p_{\min},\frac{d}{2}\right\}} =: \mu^{\mathrm{eig}}_{\max}(\lambda).
\end{align}
With parameter values \eqref{equ:ParameterValuesQCGLNr1} the bounds \eqref{equ:GeneralMaximalDecayRateEigenfunctions} lead to
\begin{align*}
  \mu^{\mathrm{eig}}(p,\lambda)=\frac{2\left(\Re\lambda+\frac{1}{2}\right)}{\sqrt{2}p},\quad\mu_{\max}^{\mathrm{eig}}(\lambda)=\frac{\sqrt{2}+1}{2}\left(\Re\lambda+\frac{1}{2}\right).
\end{align*}
This shows, that the decay rate is maximal for eigenvalues on the imaginary axis and decreases linearly to $0$ as $\Re\lambda$ approaches $-\frac{1}{2}$, cf. 
Figure \ref{fig:QCGLSpinningSolitonSpectrum}. Recall, that Theorem \ref{thm:LinearizedOrnsteinUhlenbeckExponentialDecayInLp} does not apply for $\Re\lambda\leqslant-\frac{1}{2}$.
For the isolated eigenvalues labeled by a green square in Figure \ref{fig:QCGLSpinningSolitonSpectrum}, the TDR's $\mu_{\max}^{\mathrm{eig}}(\lambda)$ of the associated eigenfunctions 
are given in the third columns of Table \ref{tab:DecayRatesSpinningSoliton}.

We compare with the numerical exponential decay rates for the eigenfunctions: Figure \ref{fig:QCGLSpinningSolitonDecayEigenfunctions} shows the absolute value 
of the eigenfunctions along the lines from \eqref{equ:Line2D} and \eqref{equ:Line3D} with $\mathbf{w}$ instead of $\mathbf{w_{\star}}$. The eigenfunctions are associated to the  
eigenvalues in green boxes in Figure \ref{fig:QCGLSpinningSolitonSpectrum}. The color of the graphs vary
with $\Re\lambda$ of the associated eigenvalue.
Varying $\Re\lambda$ from $0$ to $-\frac{1}{2}$, the 
graphs change color from blue to red. A red graph indicates that $\Re\lambda$ is near $-\frac{1}{2}$ and that the TDR is small. Finally, a black graph indicates an eigenvalue
$\Re\lambda\leqslant-\frac{1}{2}$, in which case we do not have a TDR.
All eigenfunctions are approximately linear in the regions  enclosed by the black dashed lines, which are again $[5,13]$ for $d=2$, and $[5,9]$ for $d=3$. Moreover, 
we observe that the decay rate of the eigenfunctions decreases when the eigenvalue moves to the left of the imaginary axis. We note that even those eigenfunctions the 
eigenvalues of which satisfy $\Re\lambda\leqslant-\frac{1}{2}$, have exponential decay. Once more, we used linear regression on $1000$ radially equispaced points to estimate 
the NDR. The numerical values are collected in the second columns of Table \ref{tab:DecayRatesSpinningSoliton}. Again the TDR's are surprisingly close to the NDR's with 
difference increasing towards $\Re\lambda=-\frac{1}{2}$.

\begin{table}[H]
\centering
\begin{minipage}{0.45\textwidth}
  \begin{tabular}{|r|c|c|}
    \hline
    eigenvalue & NDR & TDR \\
    \hline
    $5.29\cdot 10^{-15}$  & $0.5713$ & $0.6036$ \\ 
    $-0.00002\pm1.0270i$  & $0.5730$ & $0.6035$ \\ 
    $-0.17509\pm1.8183i$  & $0.5001$ & $0.3922$ \\ 
    $-0.17695\pm0.3340i$  & $0.4815$ & $0.3900$ \\ 
    $-0.19882\pm1.6205i$  & $0.4139$ & $0.3636$ \\ 
    $-0.21211\pm2.7050i$  & $0.4652$ & $0.3475$ \\ 
    $-0.22794\pm2.2695i$  & $0.5155$ & $0.3284$ \\ 
    $-0.26402\pm1.0624i$  & $0.5355$ & $0.2849$ \\ 
    $-0.31017\pm3.5224i$  & $0.4044$ & $0.2291$ \\ 
    $-0.46659\pm4.2742i$  & $0.2984$ & $0.0403$ \\ 
    $-0.54131\pm2.8166i$  & $0.2972$ & --- \\
    $-0.60226\pm0.6492i$  & $0.3982$ & --- \\
    $-0.67248\pm3.6064i$  & $0.3889$ & --- \\
    \hline
  \end{tabular}
  \vfill
\end{minipage}\hfill
\begin{minipage}{0.45\textwidth}
  \begin{tabular}{|r|c|c|}
  \hline
  eigenvalue & NDR & TDR\\
  \hline
  $8.999\cdot 10^{-15}$   & $0.5387$ & $0.4714$ \\ 
  $-5.6162\cdot 10^{-4}$  & $0.5478$ & $0.4714$ \\ 
  $0.00110\pm0.68827i$    & $0.5507$ & $0.4714$ \\ 
  $0.00248\pm0.6874i$     & $0.5398$ & $0.4714$ \\ 
  $-0.06622\pm1.0112i$    & $0.4899$ & $0.4090$ \\ 
  $-0.07747\pm1.5274i$    & $0.5355$ & $0.3984$ \\ 
  $-0.22334\pm1.1593i$    & $0.4756$ & $0.2608$ \\ 
  $-0.26467\pm0.1193i$    & $0.4785$ & $0.2219$ \\ 
  $-0.30232\pm1.9457i$    & $0.4649$ & $0.1864$ \\ 
  $-0.43957\pm2.3248i$    & $0.3595$ & $0.0570$ \\ 
  $-0.44063\pm1.5128i$    & $0.3310$ & $0.0560$ \\ 
  $-0.47366\pm1.3552i$    & $0.4781$ & $0.0248$ \\ 
  $-0.48294\pm0.9163i$    & $0.4145$ & $0.0161$ \\ 
  $-0.48506\pm0.0991i$    & $0.2126$ & $0.0141$ \\ 
  $-0.49015\pm0.2535i$    & $0.3307$ & $0.0093$ \\ 
  $-0.55519\pm1.1222i$    & $0.3581$ & --- \\
  \hline
 \end{tabular}
\end{minipage}
\caption{Numerical (NDR) and theoretical (TDR) exponential decay rates of QCGL \eqref{equ:ComplexQuinticCubicGinzburgLandauEquation} for the eigenfunctions of 
 the linearization at a spinning soliton for $d=2$ (left) and $d=3$ (right).}
\label{tab:DecayRatesSpinningSoliton}
\end{table}

\def\cprime{$'$}

\end{document}